\documentclass[a4paper]{amsart}
\pdfoutput=1
\usepackage[utf8]{inputenc}
\usepackage[T1]{fontenc}
\usepackage{lmodern}
\usepackage{amssymb,amsxtra}
\usepackage{graphicx}
\usepackage{mathabx}
\usepackage{fancybox}
\usepackage{nicefrac,mathtools}
\usepackage[lite]{amsrefs}
\usepackage[toc,page]{appendix}
\renewcommand*{\MR}[1]{ \href{http://www.ams.org/mathscinet-getitem?mr=#1}{MR \textbf{#1}}}
\newcommand*{\arxiv}[1]{\href{http://www.arxiv.org/abs/#1}{arXiv: #1}}

\renewcommand{\PrintDOI}[1]{\href{http://dx.doi.org/\detokenize{#1}}{doi: \detokenize{#1}}}

% also defined by stmaryd

%\textwidth  16cm \textheight 9.2in \headsep 0.0in
%\oddsidemargin 0.0cm \evensidemargin 0.0cm

% doi and eprint added to standard definition from amsrefs.sty
\BibSpec{book}{%
  +{}  {\PrintPrimary}                {transition}
  +{,} { \textit}                     {title}
  +{.} { }                            {part}
  +{:} { \textit}                     {subtitle}
  +{,} { \PrintEdition}               {edition}
  +{}  { \PrintEditorsB}              {editor}
  +{,} { \PrintTranslatorsC}          {translator}
  +{,} { \PrintContributions}         {contribution}
  +{,} { }                            {series}
  +{,} { \voltext}                    {volume}
  +{,} { }                            {publisher}
  +{,} { }                            {organization}
  +{,} { }                            {address}
  +{,} { \PrintDateB}                 {date}
  +{,} { }                            {status}
  +{}  { \parenthesize}               {language}
  +{}  { \PrintTranslation}           {translation}
  +{;} { \PrintReprint}               {reprint}
  +{.} { }                            {note}
  +{.} {}                             {transition}
  +{} { \PrintDOI}                   {doi}
  +{} { available at \url}            {eprint}
  +{}  {\SentenceSpace \PrintReviews} {review}
}

\usepackage{enumitem} %<--advanced enumerate, which handles label and ref
\setlist[enumerate,1]{label=\textup{(\arabic*)}}% ensure enumerates in theorems are upright

\usepackage{tikz}
\usetikzlibrary{matrix,calc,arrows,decorations.pathmorphing}
\tikzset{node distance=2cm, auto}

% commuting diagram styles
\tikzset{cd/.style=matrix of math nodes,row sep=2em,column sep=2em, text height=1.5ex, text depth=0.5ex}
\tikzset{cdar/.style=->,auto}
\tikzset{mid/.style={anchor=mid}} % put labels on the arrow
\tikzset{narrowfill/.style={inner sep=1pt, fill=white}}% style for nodes with filled background
\tikzset{rndblock/.style={rounded corners,rectangle,draw,outer sep=0pt}}

\usepackage{tikz-cd}

\usepackage{microtype}
\usepackage[pdftitle={Noncommutative Cartan C*-subalgebras},   pdfauthor={Bartosz Kwa\'sniewski, Ralf Meyer},  pdfsubject={Mathematics}]{hyperref}

\theoremstyle{plain}
\newtheorem{theorem}{Theorem}
\newtheorem{lemma}[theorem]{Lemma}

\newtheorem{proposition}[theorem]{Proposition}
\newtheorem{corollary}[theorem]{Corollary}
\theoremstyle{definition}
\newtheorem{definition}[theorem]{Definition}
\theoremstyle{remark}
\newtheorem{remark}[theorem]{Remark}
\newtheorem{example}[theorem]{Example}

\numberwithin{theorem}{section}
\numberwithin{equation}{section}

\DeclareMathOperator{\Aut}{Aut}% automorphism group
\DeclareMathOperator{\Prim}{Prim}% primitive ideal space
% prime ideal space
\DeclareMathOperator{\Bis}{Bis}% bisections of an étale groupoid

% Domain
% support
% interior
%generates two-line subscripts for sums and products

%partial homeomorphisms

\newcommand{\D}{X}
\newcommand{\haction}{h}

\newcommand*{\nb}{\nobreakdash}
\newcommand*{\Star}{\(^*\)\nobreakdash-}

\newcommand*{\C}{\mathbb C}
\newcommand*{\Z}{\mathbb Z}

\newcommand*{\N}{\mathbb N}
% circle group

% strong Cuntz order

\newcommand*{\Ideals}{\mathbb I}% ideal lattice
\newcommand*{\Open}{\mathbb O}% open subsets
\newcommand*{\Null}{\mathcal N}% null space for positive map
% null space for positive map
% null space for positive map
\newcommand*{\Bound}{\mathbb B}%adjointable operators on a Hilbert module
%compact operators on a Hilbert module
%matrices
%\newcommand{\Her}{\mathbb H}%hereditary subalgebras
\newcommand*{\red}{\mathrm r}% reduced
% visible instead of essential
% differential in integrals
\newcommand*{\alg}{\mathrm{alg}} % algebraic

% space of quasi-continuous sections
% sigma algebra

% class of Cartan subalgebras
% class of inverse semigroup actions
% class of closed, purely outer inverse semigroup actions

%for arguments in functors

\newcommand*{\Cst}{\textup C^*}% C*-algebra
% W*-algebra
\newcommand*{\Mult}{\mathcal M}% multiplier algebra
% local multiplier algebra
% local crossed product
% unitaries in multiplier algebra

\newcommand*{\Cont}{\textup C}% continuous functions
\newcommand*{\Contb}{\textup C_\textup b}% bounded continuous functions
 % compactly supporte continuous functions
% Borel functions

\newcommand*{\prid}[1][p]{\mathfrak{#1}} % primitive ideal

% bidual
% weak completion of tensor
% weakly complete rtimes
 % relation of being an ideal
\newcommand*{\Slice}{\mathcal S}% inverse semigroup of slices
\newcommand*{\Id}{\textup{Id}}%identity
%conjugation by a unitary

\newcommand*{\Hils}{\mathcal H}%Hilbert space
\newcommand*{\Hilm}[1][E]{\mathcal #1}%Hilbert module

% unitaries
\newcommand*{\B}{\mathcal B}% C*-bundles
\newcommand*{\A}{\mathcal A}% Fell bundles
% line bundles
\newcommand*{\defeq}{\mathrel{\vcentcolon=}}% used for definitions
% used for definitions
\newcommand*{\congto}{\xrightarrow\sim}

\DeclarePairedDelimiter{\abs}{\lvert}{\rvert}% absolute value
\DeclarePairedDelimiter{\norm}{\lVert}{\rVert}% norm
% ket-bra notation
% ket-bra notation
\DeclarePairedDelimiterX{\braket}[2]{\langle}{\rangle}{#1\,\delimsize\vert\,\mathopen{}#2}% inner product
\DeclarePairedDelimiterX{\BRAKET}[2]{\langle}{\rangle}{\!\delimsize\langle#1\,\delimsize\vert\,\mathopen{}#2\delimsize\rangle\!}% inner product
\DeclarePairedDelimiterX{\setgiven}[2]{\{}{\}}{#1\,{:}\,\mathopen{}#2}% set given by

\newcommand*{\dual}[1]{\widehat{#1}}% duals

\newcommand*{\s}{s}% source map of groupoids
\newcommand*{\rg}{r}% range map of groupoids

\DeclareMathOperator*{\stlim}{s-lim}% notation for strict limit

\newcommand*{\onto}{\twoheadrightarrow}

\begin{document}
\title{Noncommutative Cartan \(\Cst\)-subalgebras}

\author{Bartosz Kosma Kwa\'sniewski}
\email{bartoszk@math.uwb.edu.pl}
 \address{Department of Mathematics\\
   University  of Bia\l ystok\\
   ul.\@ K.~Cio\l kowskiego 1M\\
   15-245 Bia\l ystok\\
   Poland}

\author{Ralf Meyer}
\email{rmeyer2@uni-goettingen.de}
\address{Mathematisches Institut\\
 Georg-August-Universit\"at G\"ottingen\\
 Bunsenstra\ss e 3--5\\
 37073 G\"ottingen\\
 Germany}

\begin{abstract}
  We characterise Exel's noncommutative Cartan subalgebras in
  several ways using uniqueness of conditional expectations,
  relative commutants, or purely outer inverse semigroup actions.
  We describe in which sense the crossed product decomposition for a
  noncommutative Cartan subalgebra is unique.  We relate the
  property of being a noncommutative Cartan subalgebra to aperiodic
  inclusions and effectivity of dual groupoids.  In particular, we
  extend Renault's characterisation of commutative Cartan
  subalgebras.
\end{abstract}

\subjclass[2010]{46L55, 20M18, 22A22}
\maketitle
\setcounter{tocdepth}{1}
%\tableofcontents

\section{Introduction}
\label{sec:introduction}

Many important \(\Cst\)\nb-algebras may be described as groupoid
\(\Cst\)\nb-algebras of Hausdorff, étale, locally compact groupoids.
Renault~\cite{Renault:Cartan.Subalgebras} defined a \emph{Cartan
  subalgebra} in a \(\Cst\)\nb-algebra~\(B\) as a maximal abelian,
regular \(\Cst\)\nb-subalgebra \(A\subseteq B\) with a faithful
conditional expectation \(E\colon B\to A\).  Assuming~\(B\) to be
separable, he proved that \(B\cong \Cst_\red(H,\Sigma)\) for a
topologically principal, second countable, Hausdorff, étale twisted
groupoid \((H,\Sigma)\).  In addition, the isomorphism
\(B\to \Cst_\red(H,\Sigma)\) maps~\(A\) onto \(\Cont_0(H^0)\), and
\((H,\Sigma)\) is unique up to isomorphism.  As a result, a Cartan
subalgebra allows to reconstruct an underlying dynamical system from
a \(\Cst\)\nb-inclusion.  Kumjian's earlier theory of
\(\Cst\)\nb-diagonals in~\cite{Kumjian:Diagonals} covered only the
reduced twisted groupoid \(\Cst\)\nb-algebras of \emph{principal}
étale groupoids.  The notion of a Cartan subalgebra has become
ubiquitous in the study of \(\Cst\)\nb-algebras (see, for instance,
\cite{Li-Renault:Cartan_subalgebras} and the sources cited there);
in particular, it is crucial for the UCT problem
\cites{Barlak-Li:Cartan_UCT, Barlak-Li:Cartan_UCT_II} and rigidity
of dynamical
systems~\cite{Carlsen-Ruiz-Sims-Tomforde:Reconstruction}.

The success of Renault's theory led Exel~\cite{Exel:noncomm.cartan}
to generalise Renault's definition to the case when the subalgebra
\(A\subseteq B\) need no longer be commutative.  In Exel's
definition of a \emph{noncommutative Cartan subalgebra}, Renault's
requirement that~\(A\) be maximal Abelian in~\(B\) is replaced by
the requirement that all ``virtual commutants'' of~\(A\) in~\(B\) be
trivial.  The main result of~\cite{Exel:noncomm.cartan} says that
every noncommutative Cartan \(\Cst\)\nb-inclusion \(A\subseteq B\)
into a separable \(\Cst\)\nb-algebra~\(B\) is isomorphic to an
inclusion \(A\subseteq A\rtimes_\red S\) into a reduced crossed
product \(A\rtimes_\red S\) for some action of a unital inverse
semigroup~\(S\) on~\(A\) by Hilbert \(A\)\nb-bimodules.  Such
actions of inverse semigroups are equivalent to saturated
Fell bundles over inverse semigroups (see
\cites{Buss-Meyer:Actions_groupoids, Kwasniewski-Meyer:Essential}).
And Fell bundless over inverse semigroups are slightly more general
than Fell bundles over étale groupoids (see
\cites{BussExel:Fell.Bundle.and.Twisted.Groupoids,Buss-Meyer:Actions_groupoids,
  Kwasniewski-Meyer:Essential}).  Exel~\cite{Exel:noncomm.cartan}
did not identify the inverse semigroup actions for which the
inclusion \(A\subseteq A\rtimes_\red S\) is Cartan, and he did not
study the uniqueness of such crossed product decompositions.  Here
we answer these questions and push Exel's theory much further.

Our first main result (Theorem~\ref{the:nc_Cartan}) characterises
noncommutative Cartan inclusions in a number of different ways.  We
show that an inclusion \(A\subseteq A\rtimes_\red S\) is Cartan if
and only if the action of the inverse semigroup~\(S\) on~\(A\) is
``closed'' and ``purely outer''.  The notion of pure outerness is
generalised from automorphisms to Hilbert bimodules
in~\cite{Kwasniewski-Meyer:Aperiodicity}.  In Renault's setting,
pure outerness corresponds to the effectivity of the underlying
groupoid.  The condition that the action be closed corresponds to
Hausdorffness of the groupoid.  We show that an inclusion
\(A\subseteq B\) has only trivial virtual commutants if and only if,
for each ideal~\(I\) in~\(A\), the relative commutant
\(A' \cap \Mult(I B I)\) is contained in~\(\Mult(I)\).  And a
regular \(\Cst\)\nb-inclusion \(A\subseteq B\) with a faithful
conditional expectation \(E\colon B\to A\) is Cartan if and only if,
for each ideal~\(I\) in~\(A\), \(E|_{I B I}\) is the only
conditional expectation \(I B I \to I\).  When the primitive ideal
space of \(A\) is Hausdorff, it is enough to assume that the
expectation~\(E\) itself is unique, without passing to ideals (see
Proposition~\ref{pro:unique_conditionals_commutative}).  Exel
already shows that the conditional expectation \(B\to A\) for a
noncommutative Cartan subalgebra is unique.  This is a major step in
his proof that~\(B\) is isomorphic to a reduced inverse semigroup
crossed product.  Our proof of the converse direction uses
ideas of Zarikian~\cite{Zarikian:Unique_expectations}.

In passing, we extend Exel's theory, and thus also Renault's
characterisation, to the non-separable case.  This extension amounts
to replacing frames in Hilbert bimodules by certain special
approximate units, which were already used
in~\cite{Brown-Mingo-Shen:Quasi_multipliers}.
The separability assumption in the previous works goes back to the
prototype result by Feldman and
Moore~\cite{Feldman-Moore:Ergodic_II} in the von Neumann algebraic
setting.  These results have recently been extended to the
non-separable case in~\cite{Donsig-Fuller-Pitts:Extensions} using
inverse semigroup methods.  In Renault's approach separability plays
only a role in the construction of the dual groupoid.  Namely,
Renault uses groupoids of germs, while we use transformation
groupoids.  The two constructions coincide if and only if the
transformation groupoid is effective.  And a second countable,
Hausdorff, étale groupoid is effective if and only if it is
topologically principal.

If~\(A\) is commutative, then an inverse semigroup action on~\(A\)
gives rise to a groupoid, the dual groupoid \(\dual{A} \rtimes S\).
And \(A\rtimes_{(\red)} S\) is isomorphic to the (reduced) section
\(\Cst\)\nb-algebra of a Fell line bundle over~\(\dual{A}\).
Similar results are still available if the primitive ideal
space~\(\widecheck{A}\) of~\(A\) is Hausdorff.  Two extreme cases
where this happens are when~\(A\) is simple or commutative.
If~\(\widecheck{A}\) is Hausdorff, then a noncommutative Cartan
inclusion \(A\subseteq B\) is isomorphic to the inclusion of~\(A\)
into the reduced section \(\Cst\)\nb-algebra of a Fell bundle over
an étale, Hausdorff groupoid with unit space~\(\widecheck{A}\)
(Theorem~\ref{the:Cartan_Hausdorff}).  In particular, this result
explains how Renault's theory is a special case of Exel's theory.
As in Renault's theory, the groupoid and the Fell bundle are unique
up to isomorphism.

This leads to our second main result
(Theorem~\ref{thm:uniqueness_purely_outer_action}), which settles
the uniqueness of the crossed product decomposition.  This is more
subtle than uniqueness of the underlying twisted groupoid for
commutative Cartan subalgebras because many inverse semigroup
actions on a space give the same transformation groupoid.
Therefore, for an arbitrary action of a unital inverse
semigroup~\(S\) on a \(\Cst\)\nb-algebra~\(A\), we define a
\emph{refined action} that has the same crossed product and the same
dual groupoid \(\dual{A}\rtimes S\).  Two inverse semigroup actions
that model the same noncommutative Cartan subalgebra have isomorphic
refinements.

Our characterisations of noncommutative Cartan subalgebras have a
number of important consequences when combined with our previous
work on aperiodic inclusions in
\cites{Kwasniewski-Meyer:Aperiodicity, Kwasniewski-Meyer:Essential}.
On the one hand, there are noncommutative Cartan
\(\Cst\)\nb-subalgebras that do not detect ideals in~\(B\).  There
is a counterexample where~\(A\) is an AF-algebra and
\(B=A\rtimes_\alpha \Z\) for an automorphism~\(\alpha\) of~\(A\).
On the other hand, if~\(A\) is prime or contains an essential ideal
of Type~I, then every noncommutative Cartan inclusion
\(A\subseteq B\) is aperiodic in the sense
of~\cite{Kwasniewski-Meyer:Essential}.  Then it follows that~\(A\)
detects ideals in~\(B\) and even supports positive elements
in~\(B^+\); these properties are crucial to study the ideal
structure and pure infiniteness of~\(B\).  We prove elegant
characterisations of regular aperiodic \(\Cst\)\nb-inclusions
\(A\subseteq B\) with a faithful conditional expectation, either as
crossed products by aperiodic, closed inverse semigroup actions
on~\(A\), or as regular inclusions \(A\subseteq B\) whose dual
groupoids are effective and have closed space of units (Theorems
\ref{thm:characterisation_of_aperiodic_crossed_products}
and~\ref{thm:characterisation_of_aperiodic_crossed_products_via_topological_freeness}).

This article is organised as follows.  Section~\ref{sec:prelim} recalls
some general results on regular inclusions and inverse semigroup
actions and their crossed products.  We also define generalised
Fourier coefficients for inverse semigroup crossed products,
extending a similar definition for Fell bundles over groups.  In
Section~\ref{subsec:preserve_ce_regular}, we characterise when a
conditional expectation \(E\colon B\to A\) on an \(S\)\nb-graded
\(\Cst\)\nb-algebra induces the canonical expectation
on~\(A\rtimes S\).  This gives a sufficient condition, still rather
shallow, for an isomorphism \(A\rtimes_\red S \cong B\).
Section~\ref{sec:Exel} introduces noncommutative Cartan subalgebras
and characterises them in different ways.  The uniqueness of the
inverse semigroup action induced by a noncommutative Cartan
subalgebra is formulated in Section~\ref{sec:uniqueness_Cartan}.
This replaces the well known fact that Renault's twisted groupoid is
uniquely determined up to isomorphism.
Section~\ref{sec:pure_outer_vs_aperiodic} compares noncommutative
Cartan subalgebras with aperiodic inclusions.
Section~\ref{sec:Hausdorff_primitive_ideal_space} specialises to the
case when~\(\widecheck{A}\) is Hausdorff, with Sections
\ref{sec:Cartan_simple} and~\ref{sec:Cartan_commutative} treating
simple and commutative Cartan subalgebras.

\section{Preliminaries on inverse semigroup actions and crossed
  products}
\label{sec:prelim}

In this section, we briefly discuss some basic objects and facts
needed later.  See \cite{Kwasniewski-Meyer:Essential}*{Sections 1
  and~2} and the sources cited there for more details.  The only new
tool introduced here are the generalised Fourier coefficients in
Proposition~\ref{pro:harmonic_projections}.

\subsection{Inverse semigroup actions and regular inclusions}
\label{sec:isg_crossed}

Throughout this article, \(S\) stands for a unital inverse
semigroup with unit \(1\in S\).  It is equipped with the standard
partial order, which is defined by \(t \le u\) for \(t,u \in S\) if
and only if \(t = u t^* t\).

\begin{definition}[\cite{Kwasniewski-Meyer:Stone_duality}*{Definition~6.15}]
  \label{def:isg_grading}
  An \emph{\(S\)\nb-graded \(\Cst\)\nb-algebra} is a
  \(\Cst\)\nb-algebra~\(B\) with closed subspaces \(B_t\subseteq B\)
  for \(t\in S\) such that \(\sum_{t\in S} B_t\) is dense in~\(B\),
  \(B_t B_u \subseteq B_{t u}\) and \(B_t^* = B_{t^*}\) for all
  \(t,u\in S\), and \(B_t \subseteq B_u\) if \(t\le u\) in~\(S\).
  The grading is \emph{saturated} if \(B_t\cdot B_u = B_{t u}\) for
  all \(t,u\in S\).  We call \(A\defeq B_1\subseteq B\) the
  \emph{unit fibre} of the \(S\)\nb-grading.
\end{definition}

\begin{remark}
  If the \(S\)\nb-grading \((B_t)_{t\in S}\) is saturated, then the
  condition \(B_t \subseteq B_u\) if \(t\le u\) in~\(S\) follows
  from the other conditions.
\end{remark}

An \(S\)\nb-grading on a \(\Cst\)\nb-algebra~\(B\) gives a family of
Banach spaces~\((B_t)_{t\in S}\) with conjugate-linear isometries
\(B_t \congto B_{t^*}\), \(x\mapsto x^*\), multiplication maps
\(B_t \times B_u \to B_{t\cdot u}\) for all \(t,u\in S\), and
isometric embeddings \(B_t \hookrightarrow B_u\) for all
\(t,u\in S\) with \(t \le u\).  This is the data of a Fell bundle
over~\(S\).  It is a \emph{saturated Fell bundle} if the
multiplication maps are surjective.  The definition of a Fell bundle
imposes a long list of conditions on this data
(see~\cite{Exel:noncomm.cartan}).  These conditions hold if and only
if the data is realised by some \(S\)\nb-graded \(\Cst\)\nb-algebra,
which may be taken saturated if the Fell bundle is saturated.  The
definition of a Fell bundle simplifies in the saturated case.  Namely,
saturated Fell bundles are equivalent to the following
\(S\)\nb-actions by Hilbert bimodules:

\begin{definition}[\cite{Buss-Meyer:Actions_groupoids}]
  \label{def:S_action_Cstar}
  An \emph{action} of~\(S\) on a \(\Cst\)\nb-algebra~\(A\) (by
  Hilbert bimodules) consists of Hilbert
  \(A\)\nb-bimodules~\(\Hilm_t\) for \(t\in S\) and Hilbert bimodule
  isomorphisms
  \(\mu_{t,u}\colon \Hilm_t\otimes_A \Hilm_u\congto \Hilm_{tu}\) for
  \(t,u\in S\), such that
  \begin{enumerate}[label=\textup{(A\arabic*)}]
  \item \label{enum:AHB3}%
    for all \(t,u,v\in S\), the following diagram commutes
    (associativity):
    \[
    \begin{tikzpicture}[baseline=(current bounding box.west)]
      \node (1) at (0,1) {\((\Hilm_t\otimes_A \Hilm_u) \otimes_A \Hilm_v\)};
      \node (1a) at (0,0) {\(\Hilm_t\otimes_A (\Hilm_u \otimes_A \Hilm_v)\)};
      \node (2) at (5,1) {\(\Hilm_{tu} \otimes_A \Hilm_v\)};
      \node (3) at (5,0) {\(\Hilm_t\otimes_A \Hilm_{uv}\)};
      \node (4) at (7,.5) {\(\Hilm_{tuv}\)};
      \draw[<->] (1) -- node[swap] {associator}    (1a);
      \draw[cdar] (1) -- node {\(\mu_{t,u}\otimes_A \Id_{\Hilm_v}\)} (2);
      \draw[cdar] (1a) -- node[swap] {\(\Id_{\Hilm_t}\otimes_A\mu_{u,v}\)} (3);
      \draw[cdar] (3.east) -- node[swap] {\(\mu_{t,uv}\)} (4);
      \draw[cdar] (2.east) -- node {\(\mu_{tu,v}\)} (4);
    \end{tikzpicture}
    \]
  \item \label{enum:AHB1}%
    \(\Hilm_1\) is the identity Hilbert \(A,A\)-bimodule~\(A\);
  \item \label{enum:AHB2}%
    \(\mu_{t,1}\colon \Hilm_t\otimes_A A\congto \Hilm_t\) and
    \(\mu_{1,t}\colon A\otimes_A \Hilm_t\congto \Hilm_t\) for
    \(t\in S\) are the maps defined by
    \(\mu_{1,t}(a\otimes\xi)=a\cdot\xi\) and
    \(\mu_{t,1}(\xi\otimes a) = \xi\cdot a\) for \(a\in A\),
    \(\xi\in\Hilm_t\).
  \end{enumerate}
\end{definition}

Any \(S\)\nb-action by Hilbert bimodules comes with canonical
involutions \(\Hilm_t^*\to \Hilm_{t^*}\), \(x\mapsto x^*\), and
inclusion maps \(j_{u,t}\colon \Hilm_t\to\Hilm_u\) for \(t \le u\)
that satisfy the conditions required for a saturated Fell bundle
in~\cite{Exel:noncomm.cartan} (see
\cite{Buss-Meyer:Actions_groupoids}*{Theorem~4.8}).  Thus
\(S\)\nb-actions by Hilbert bimodules are equivalent to saturated
Fell bundles over~\(S\).

\begin{definition}
  \label{def:normaliser}
  Let \(A\subseteq B\) be a \(\Cst\)\nb-subalgebra.  We call the
  elements of
  \[
  N(A,B)\defeq \setgiven{b\in B}{b A b^*\subseteq A,\ b^* A b\subseteq A}
  \]
  \emph{normalisers} of~\(A\) in~\(B\)
  (see~\cite{Kumjian:Diagonals}).  We call the inclusion
  \(A\subseteq B\) \emph{regular} if it is non-degenerate and the
  linear span of~\(N(A,B)\) is dense in~\(B\)
  (see~\cite{Renault:Cartan.Subalgebras}).
\end{definition}

If \(b\in N(A,B)\) and \(a_1,a_2\in A\), then
\(a_1 \cdot b \cdot a_2 \in N(A,B)\).  It often happens that
\(b,c\in N(A,B)\), but \(b+c\notin N(A,B)\).  So \(N(A,B)\) is not a
vector subspace.

\begin{proposition}[\cite{Kwasniewski-Meyer:Aperiodicity}*{Proposition~2.11},
  \cite{Kwasniewski-Meyer:Stone_duality}*{Proposition~6.26}]
  \label{prop:regular_vs_inverse_semigroups}
  The following are equivalent for a \(\Cst\)\nb-inclusion
  \(A\subseteq B\):
  \begin{enumerate}
  \item \label{enu:non_commutative_cartans1}%
    \(A\) is a regular subalgebra of~\(B\);
  \item \label{enu:non_commutative_cartans2}%
    \(A\) is the unit fibre for some inverse semigroup grading
    on~\(B\);
  \item \label{enu:non_commutative_cartans3}%
    \(A\) is the unit fibre for some saturated inverse semigroup
    grading on~\(B\).
  \end{enumerate}
  If these equivalent conditions hold, then
  \[
  \Slice(A,B)\defeq \setgiven{M \subseteq N(A,B)}
  {M \text{ is a closed linear subspace, } AM\subseteq M,\ MA\subseteq M}
  \]
  with the operations
  \(M\cdot N\defeq \operatorname{\overline{span}} {}\setgiven{m n}{m
    \in M,\ n\in N}\) and \(M^*\defeq \setgiven{m^*}{m \in M}\) is
  an inverse semigroup.  And the subspaces \(M\in \Slice(A,B)\) form
  a saturated \(\Slice(A,B)\)-grading on~\(B\).  Let~\(S\) be a
  unital inverse semigroup and \((B_t)_{t\in S}\) an
  \(S\)\nb-grading on~\(B\) with \(A=B_1\).  Then
  \(B_t\in\Slice(A,B)\) for all \(t\in S\).  The map
  \(t\mapsto B_t\) is a homomorphism \(S\to \Slice(A,B)\) if and
  only if the \(S\)\nb-grading on~\(B\) is saturated.
\end{proposition}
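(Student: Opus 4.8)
The plan is to prove Proposition~\ref{prop:regular_vs_inverse_semigroups} by establishing the cycle of implications \ref{enu:non_commutative_cartans3}\(\Rightarrow\)\ref{enu:non_commutative_cartans2}\(\Rightarrow\)\ref{enu:non_commutative_cartans1}\(\Rightarrow\)\ref{enu:non_commutative_cartans3} and then verifying the supplementary claims about \(\Slice(A,B)\). The first implication is immediate since a saturated grading is in particular a grading. For \ref{enu:non_commutative_cartans2}\(\Rightarrow\)\ref{enu:non_commutative_cartans1}, I would show that each fibre~\(B_t\) consists of normalisers: given \(b\in B_t\), the relations \(B_t B_1\subseteq B_t\), \(B_1 B_t\subseteq B_t\), \(B_t^*B_1 B_t\subseteq B_{t^*}B_1 B_t\subseteq B_{t^*t}\subseteq B_1=A\) (using \(t^*t\le 1\)) and symmetrically \(b A b^*\subseteq A\) give \(b\in N(A,B)\). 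Since \(\sum_{t\in S}B_t\) is dense in~\(B\), the span of normalisers is dense, so the inclusion is regular; non-degeneracy follows because \(B_1=A\) acts non-degenerately given that the grading covers~\(B\).

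\smallskip

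The substantive direction is \ref{enu:non_commutative_cartans1}\(\Rightarrow\)\ref{enu:non_commutative_cartans3}, and here I would build the grading directly from \(\Slice(A,B)\). First I would verify that \(\Slice(A,B)\) is closed under the stated operations: for \(M,N\in\Slice(A,B)\) the product \(M\cdot N\) lands in~\(N(A,B)\) because \(N(A,B)\) is closed under multiplication (if \(m,n\in N(A,B)\) then \((mn)A(mn)^*=mnAn^*m^*\subseteq mAm^*\subseteq A\)), and the closed-span and the bimodule conditions \(A(M\cdot N)\subseteq M\cdot N\), \((M\cdot N)A\subseteq M\cdot N\) are routine from \(AM\subseteq M\) and \(NA\subseteq N\); that \(M^*\in\Slice(A,B)\) is clear from the symmetry of the normaliser condition. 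The heart is checking that \(\Slice(A,B)\) is an \emph{inverse} semigroup under these operations. Associativity of the product follows from associativity of multiplication in~\(B\) together with the fact that closed span is compatible with composition. For the inverse semigroup axioms I would show that \(M\cdot M^*\cdot M=M\) and that the idempotents \(M\cdot M^*\) commute; the key computation is that \(M\cdot M^*\cdot M=M\), which uses that elements of~\(M\) are normalisers so that \(m=\lim m(m^*m)^{1/n}\in M\cdot M^*\cdot M\) via an approximate-unit argument, exploiting \(m^*m\in A\) and \(AM\subseteq M\). I expect this last point --- proving \(M=M\cdot M^*\cdot M\) and the commutation of idempotents --- to be the main obstacle, since it is where the \(\Cst\)-algebraic structure (functional calculus on \(m^*m\in A\), closedness of~\(M\)) really enters.

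\smallskip

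Once \(\Slice(A,B)\) is an inverse semigroup, I would check that the family \((M)_{M\in\Slice(A,B)}\) is a saturated grading: the containments \(M\cdot N\subseteq M\cdot N\) and \(M^*=M^*\) hold by definition, the order condition \(M\subseteq N\) for \(M\le N\) is built into the subspace ordering, and saturation \(M\cdot N=M\cdot N\) is automatic from the definition of the product as a closed span. Density of \(\sum_{M}M\) follows because each single normaliser~\(b\) generates a slice \(\overline{A b A}\in\Slice(A,B)\) containing~\(b\), so the span of all slices contains the dense span of normalisers. This exhibits~\(A\) as the unit fibre \(B_1\) of a saturated grading (with unit \(1=A=\overline{AA}\in\Slice(A,B)\)), proving \ref{enu:non_commutative_cartans3}.

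\smallskip

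For the final assertions, given an \(S\)\nb-grading \((B_t)_{t\in S}\) with \(A=B_1\), each~\(B_t\) is a closed subspace of~\(N(A,B)\) (by the normaliser computation above) satisfying \(AB_t\subseteq B_t\) and \(B_tA\subseteq B_t\), hence \(B_t\in\Slice(A,B)\). The map \(t\mapsto B_t\) always satisfies \(B_t\cdot B_u\subseteq B_{tu}\) and \(B_t^*=B_{t^*}\); it is multiplicative, i.e.\ \(B_t\cdot B_u=B_{tu}\), precisely when the product in \(\Slice(A,B)\) matches the grading, which by definition of the \(\Slice(A,B)\) product is exactly the statement that the grading is saturated. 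I would close by noting that preservation of the involution and the order is automatic, so the homomorphism property reduces cleanly to saturation.
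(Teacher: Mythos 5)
First, a remark on the comparison: the paper does not prove this proposition at all --- it is quoted from \cite{Kwasniewski-Meyer:Aperiodicity}*{Proposition~2.11} and \cite{Kwasniewski-Meyer:Stone_duality}*{Proposition~6.26} --- so your outline can only be measured against those references; it does follow their general architecture, and your treatment of \ref{enu:non_commutative_cartans3}\(\Rightarrow\)\ref{enu:non_commutative_cartans2}\(\Rightarrow\)\ref{enu:non_commutative_cartans1}, of \(B_t\in\Slice(A,B)\), and of the equivalence between multiplicativity of \(t\mapsto B_t\) and saturation is fine. But the core of the proposition --- that \(\Slice(A,B)\) is an inverse semigroup --- is exactly where your proposal has genuine gaps. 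The first is your claim that \(M\cdot N\) lands in \(N(A,B)\) ``because \(N(A,B)\) is closed under multiplication'': \(M\cdot N\) is a \emph{closed linear span} of products, and \(N(A,B)\) is not a linear subspace --- the paper itself stresses that \(b+c\notin N(A,B)\) can happen for normalisers \(b,c\) --- so normality of each product \(mn\) says nothing about sums and limits. What is missing is the preliminary lemma that every slice is a Hilbert \(A\)\nb-bimodule inside \(B\), that is, \(M^*M\subseteq A\) and \(MM^*\subseteq A\): for a normaliser \(x\), non-degeneracy gives \(x^*x=\lim x^*u_\lambda x\in A\) for an approximate unit \((u_\lambda)\) of~\(A\), and then polarisation writes \(m^*n\) (for \(m,n\in M\)) as a linear combination of elements \(z^*z\) with \(z\in M\subseteq N(A,B)\). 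With this lemma, a finite sum \(x=\sum_i m_in_i\) satisfies \(xAx^*\subseteq M(NAN^*)M^*\subseteq MAM^*\subseteq MM^*\subseteq A\) and likewise \(x^*Ax\subseteq A\), and closedness of~\(A\) handles limits; only then is \(M\cdot N\) a slice. The same lemma is needed for the inclusion \(M\cdot M^*\cdot M\subseteq M\) (so that \(m_1m_2^*m_3\in M\cdot A\subseteq M\)) and is tacitly used in your functional-calculus argument, which assumes \(m^*m\in A\).

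The second gap is the commutation of idempotents, which you flag as ``the main obstacle'' but never address; without it the inverse semigroup property is unproven. A complete argument runs as follows: if \(E\cdot E=E\), then \(\s(E)\defeq E^*\cdot E=E^*\cdot(E\cdot E)=(E^*\cdot E)\cdot E\subseteq A\cdot E\subseteq E\), so the ideal \(\s(E)\) is a self-adjoint subset of~\(E\); hence for \(e\in E\) and \(a\in\s(E)\) the element \(a^*\) lies in \(\s(E)\subseteq E\), so \(e\cdot a=e\cdot(a^*)^*\in E\cdot E^*\eqdef\rg(E)\); since \(E=\overline{E\cdot\s(E)}\) (again \(e=\lim e(e^*e)^{1/n}\) with \((e^*e)^{1/n}\in\s(E)\)), this gives \(E\subseteq\rg(E)\), and combined with \(\rg(E)=E\cdot(E\cdot E^*)\subseteq E\cdot A\subseteq E\) it shows that \(E=\rg(E)\) is a closed two-sided ideal of~\(A\). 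Conversely, every closed ideal of~\(A\) is an idempotent slice. Thus the idempotents of \(\Slice(A,B)\) are precisely the closed ideals of~\(A\), and they commute because \(I\cdot J=I\cap J=J\cdot I\). Together with \(M\cdot M^*\cdot M=M\) and \(M^*\cdot M\cdot M^*=M^*\), this exhibits \(\Slice(A,B)\) as a regular semigroup with commuting idempotents, hence an inverse semigroup. With these two insertions your outline becomes a complete proof along the same lines as the cited references.
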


\begin{definition}
  Elements of~\(\Slice(A,B)\) are called \emph{slices}.
\end{definition}

\subsection{Full and reduced crossed products for inverse semigroup
  actions}
\label{sec:crossed_isg}

Let \(\Hilm=\bigl((\Hilm_t)_{t\in S}, (\mu_{t,u})_{t,u\in S}\bigr)\)
be an \(S\)\nb-action on a \(\Cst\)\nb-algebra~\(A\).  For any
\(t\in S\), let \(\rg(\Hilm_t)\) and~\(\s(\Hilm_t)\) be the ideals
in~\(A\) generated by the left and right inner products of vectors
in~\(\Hilm_t\), respectively.  Thus~\(\Hilm_t\) is an
\(\rg(\Hilm_t)\)-\(\s(\Hilm_t)\)-imprimitivity bimodule.  And
\(\s(\Hilm_t) = \s(\Hilm_{t^*t}) = \rg(\Hilm_{t^*t}) =
\rg(\Hilm_{t^*})\).  If \(v\le t\), then the inclusion
map~\(j_{t,v}\) restricts to a Hilbert bimodule isomorphism
\(\Hilm_v \congto \rg(\Hilm_v) \cdot \Hilm_t = \Hilm_t\cdot
\s(\Hilm_v)\).  For \(t,u\in S\) and \(v\le t,u\), this gives
Hilbert bimodule isomorphisms
\( \vartheta^v_{u,t}\colon \Hilm_t\cdot \s(\Hilm_v)
\xleftarrow[\cong]{j_{t,v}} \Hilm_v \xrightarrow[\cong]{j_{u,v}}
\Hilm_u\cdot \s(\Hilm_v)\).  Let
\begin{equation}
  \label{eq:Itu}
  I_{t,u} \defeq \overline{\sum_{v \le t,u} \s(\Hilm_v)}
\end{equation}
be the closed ideal generated by~\(\s(\Hilm_v)\) for \(v\le t,u\).
This is contained in \(\s(\Hilm_t)\cap\s(\Hilm_u)\), and the
inclusion may be strict.  There is a unique Hilbert bimodule
isomorphism
\begin{equation}
  \label{eq:Def-thetas}
  \vartheta_{u,t}\colon \Hilm_t\cdot I_{t,u}
  \congto \Hilm_u\cdot I_{t,u}
\end{equation}
that restricts to~\(\vartheta_{u,t}^v\) on
\(\Hilm_t\cdot \s(\Hilm_v)\) for all \(v\le t,u\) by
\cite{Buss-Exel-Meyer:Reduced}*{Lemma~2.5}.

The \emph{algebraic crossed product} \(A\rtimes_\alg S\) is the
quotient vector space of \(\bigoplus_{t\in S} \Hilm_t\) by the
linear span of \(\vartheta_{u,t}(\xi)\delta_u-\xi\delta_t\) for all
\(t,u\in S\) and \(\xi\in\Hilm_t\cdot I_{t,u}\).  It is a
\Star{}algebra with multiplication and involution induced by the
maps~\(\mu_{t,u}\) and the involutions
\(\Hilm_t^* \to \Hilm_{t^*}\).  There is a maximal
\(\Cst\)\nb-norm on \(A\rtimes_\alg S\).

\begin{definition}
  The \emph{\textup{(}full\textup{)} crossed product} \(A\rtimes S\)
  of the action~\(\Hilm\) is the maximal \(\Cst\)\nb-completion of
  the \Star{}algebra \(A\rtimes_\alg S\).
\end{definition}

\begin{remark}
  \label{rem:action_vs_grading}
  The Hilbert \(A\)-bimodules \((\Hilm_t)_{t\in S}\) embed naturally
  into \(A\rtimes S\), and then they form a saturated
  \(S\)\nb-grading of \(A\rtimes S\).  In particular,
  \(A\subseteq A\rtimes S\) is a regular \(\Cst\)\nb-inclusion and
  the subspaces \((\Hilm_t)_{t\in S}\) form an inverse subsemigroup
  of \(\Slice(A,A\rtimes S)\).  For every \(S\)\nb-graded
  \(\Cst\)\nb-algebra~\(B\) with grading \((\Hilm_t)_{t\in S}\),
  there is a surjective \Star{}homomorphism \(A\rtimes S\to B\)
  which is the identity map on the fibres of the grading.  This
  universal property determines \(A\rtimes S\) uniquely up to
  isomorphism.
\end{remark}

The reduced section \(\Cst\)\nb-algebra of a Fell bundle over~\(S\)
is defined first in~\cite{Exel:noncomm.cartan}.  An equivalent
definition appears in~\cite{Buss-Exel-Meyer:Reduced}, where it is
called the reduced crossed product \(A\rtimes_\red S\) of the
action~\(\Hilm\).  Here we define \(A\rtimes_\red S\) in another
equivalent way as a quotient of \(A\rtimes S\), using the canonical
weak conditional expectation introduced
in~\cite{Buss-Exel-Meyer:Reduced} and studied in
\cite{Kwasniewski-Meyer:Essential}.  A \emph{generalised
  expectation} for a \(\Cst\)\nb-inclusion \(A\subseteq B\)
consists of another \(\Cst\)\nb-inclusion \(A\subseteq \tilde{A}\)
and a completely positive, contractive map
\(E\colon B \to \tilde{A}\) that restricts to the identity map
on~\(A\).  Then~\(E\) must be an \(A\)\nb-bimodule map (see
\cite{Kwasniewski-Meyer:Essential}*{Lemma~3.2}).  If~\(\tilde{A}\)
is the bidual von Neumann algebra~\(A''\), then~\(E\) is called a
\emph{weak conditional expectation}.  We recall that a generalised
expectation \(E\colon B\to \tilde{A} \supseteq A\) is
\emph{faithful} if \(E(b^* b)=0\) for some \(b\in B\) implies
\(b=0\).  It is \emph{almost faithful} if \(E((b c)^* b c)=0\) for
all \(c\in B\) and some \(b\in B\) implies \(b=0\) (equivalently,
the largest closed two-sided ideal contained in \(\ker E\) is zero,
see \cite{Kwasniewski:Exel_crossed}*{Proposition~2.2} or
\cite{Kwasniewski-Meyer:Essential}*{Proposition~3.5}).  There are
natural examples of almost faithful conditional expectations that
are not faithful (see
\cite{Brownlowe-Raeburn:Exel_Cuntz-Pimsner}*{Example~4.6}).  An
almost faithful generalised expectation~\(E\) is faithful if and
only if it \emph{symmetric}, that is, \(E(b^* b)=0\) is equivalent
to \(E(b b^*)=0\) for all \(b\in B\) (see
\cite{Kwasniewski-Meyer:Essential}*{Corollary~3.7}).

\begin{proposition}
  \label{prop:fast_intro_for_reduced}
  There is a canonical weak conditional expectation
  \(E\colon A\rtimes S \to A''\) defined through the formula
  \begin{equation}
    \label{eq:formula-cond.exp}
    E(\xi\delta_t)=\stlim_i\vartheta_{1,t}(\xi\cdot u_i)
  \end{equation}
  for \(\xi\in \Hilm_t\) and \(t\in S\), where~\((u_i)\) is an
  approximate unit for~\(I_{1,t}\) and \(\stlim\) denotes the limit
  in the strict topology on \(\Mult(I_{1,t})\subseteq A''\).
  The largest ideal contained in \(\ker E\) is
  \[
    \Null_E\defeq  \setgiven{b\in A\rtimes S}{E(b^* b)=0}
    = \setgiven{b\in A\rtimes S}{E(b b^*)=0},
  \]
  and~\(E\) factors through to a faithful weak conditional
  expectation \((A\rtimes S)/\Null_E \to A''\).
\end{proposition}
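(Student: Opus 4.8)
The plan is to prove the three assertions in order: first that the formula~\eqref{eq:formula-cond.exp} gives a well defined completely positive contractive map $E\colon A\rtimes S\to A''$ restricting to the identity on~$A$, then that the two descriptions of $\Null_E$ coincide and give the largest ideal in $\ker E$, and finally that the induced map on the quotient is faithful. For well-definedness I would first check that the strict limit in~\eqref{eq:formula-cond.exp} exists and is independent of the approximate unit $(u_i)$ for $I_{1,t}$. The key point is that $\vartheta_{1,t}$ is the Hilbert bimodule isomorphism $\Hilm_t\cdot I_{1,t}\congto \Hilm_1\cdot I_{1,t}=I_{1,t}\subseteq A$, so each $\vartheta_{1,t}(\xi\cdot u_i)$ already lies in~$A\subseteq A''$, and the net is a strict Cauchy net in $\Mult(I_{1,t})$. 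I would then verify that the formula respects the relations defining $A\rtimes_\alg S$, namely that $E(\vartheta_{u,t}(\xi)\delta_u)=E(\xi\delta_t)$ for $\xi\in\Hilm_t\cdot I_{t,u}$; this follows from the compatibility of the $\vartheta$'s with the inclusion maps $j_{u,v}$ established via \cite{Buss-Exel-Meyer:Reduced}*{Lemma~2.5}, together with the observation that $\vartheta_{1,u}\circ\vartheta_{u,t}$ and $\vartheta_{1,t}$ agree on the relevant ideal. Since $E$ maps into $A''$ and restricts to the identity on $A=\Hilm_1$, it is a generalised expectation in the sense recalled above; its complete positivity and contractivity are part of the construction in~\cite{Buss-Exel-Meyer:Reduced}, and by \cite{Kwasniewski-Meyer:Essential}*{Lemma~3.2} it is automatically an $A$-bimodule map.

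For the description of the null space I would argue that $\Null_E\defeq\setgiven{b}{E(b^*b)=0}$ is a left ideal and that, by the symmetry statement recalled from \cite{Kwasniewski-Meyer:Essential}*{Corollary~3.7}, it coincides with $\setgiven{b}{E(bb^*)=0}$ and hence is a two-sided ideal. To see that it is the \emph{largest} ideal inside $\ker E$, suppose $J\idealin A\rtimes S$ with $J\subseteq\ker E$; for $b\in J$ also $b^*b\in J\subseteq\ker E$, so $E(b^*b)=0$ and thus $b\in\Null_E$. Conversely $\Null_E\subseteq\ker E$ because $E$ is completely positive and contractive, so $E(b)^*E(b)\le E(b^*b)=0$ by the Cauchy–Schwarz inequality for completely positive maps. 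This both exhibits $\Null_E$ as an ideal contained in $\ker E$ and shows it dominates every such ideal, establishing the two displayed equalities and the maximality.

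Finally, the induced map $\bar E\colon(A\rtimes S)/\Null_E\to A''$ is faithful essentially by construction: if $\bar E((b+\Null_E)^*(b+\Null_E))=0$ then $E(b^*b)=0$, so $b\in\Null_E$ and $b+\Null_E=0$. The main obstacle I expect is the well-definedness step—specifically verifying that the strict limit exists, is independent of $(u_i)$, and descends through the defining relations of $A\rtimes_\alg S$ before passing to the $\Cst$-completion. This requires care because $I_{1,t}$ can be strictly smaller than $\s(\Hilm_t)$, so $\vartheta_{1,t}$ is only defined on $\Hilm_t\cdot I_{1,t}$ and one must check that truncating by $u_i\in I_{1,t}$ does not lose the relevant information and that the resulting net converges strictly rather than merely weakly. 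The compatibility with the equivalence relation collapsing $\bigoplus_t\Hilm_t$ to $A\rtimes_\alg S$ is where the coherence of the family $(\vartheta_{u,t})$ from~\eqref{eq:Def-thetas} does the real work. Once the generalised expectation is seen to be well defined and contractive, it extends continuously to the maximal completion $A\rtimes S$, and the remaining algebraic manipulations are routine.
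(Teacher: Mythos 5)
Your second paragraph is where the argument breaks, and the break is at its crux. To make \(\Null_E\defeq\setgiven{b}{E(b^*b)=0}\) two-sided you invoke ``the symmetry statement recalled from \cite{Kwasniewski-Meyer:Essential}*{Corollary~3.7}''. But that corollary is a \emph{criterion}, conditional on almost faithfulness: an almost faithful generalised expectation is faithful if and only if it is symmetric. It does not assert that any given expectation \emph{is} symmetric, and it cannot be applied here, because the canonical weak expectation \(E\colon A\rtimes S\to A''\) is in general neither faithful nor almost faithful --- almost faithfulness of \(E\) means exactly that the largest ideal in \(\ker E\) vanishes, i.e.\ \(A\rtimes S=A\rtimes_\red S\), which already fails for the trivial action of a non-amenable discrete group on \(\C\). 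Symmetry is also not a formal consequence of positivity and \(^*\)\nb-preservation (compression to a corner, \(b\mapsto pbp\), is a counterexample), so it must come from the specific structure of \(E\); indeed, the equality \(\setgiven{b}{E(b^*b)=0}=\setgiven{b}{E(bb^*)=0}\) is precisely the nontrivial content that the paper's own proof imports by citing \cite{Kwasniewski-Meyer:Essential}*{Proposition~3.15 and Theorem~3.20}. The way this is actually proved in the references is to exhibit \(\Null_E\) as the kernel of a \Star homomorphism --- the regular representation of \(A\rtimes S\) induced from the universal representation of \(A\) --- whence two-sidedness, \(^*\)\nb-closedness, and symmetry all come for free. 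Without some such argument, all you have is a closed left ideal: \((A\rtimes S)/\Null_E\) is not known to be an algebra, and your maximality and faithfulness paragraphs (which are otherwise correct, granted two-sidedness) collapse.

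On the first part your proposal is essentially on the same footing as the paper, whose entire proof of that part is the citation of \cite{Buss-Exel-Meyer:Reduced}*{Lemma~4.5}: you identify the existence of the strict limit as the main analytic point, but then defer complete positivity and contractivity --- and with them the extension from \(A\rtimes_\alg S\) to the maximal completion, which does not follow from fibrewise contractivity alone but from realising \(E\) as a compression in a concrete representation --- to the same reference. If you want this part self-contained, note that strict convergence against \emph{left} multiplication by \(b\in I_{1,t}\) needs the identity \(I_{1,t}\Hilm_t=\Hilm_t I_{1,t}\), which follows from \(j_{t,v}(\Hilm_v)=\rg(\Hilm_v)\cdot\Hilm_t=\Hilm_t\cdot\s(\Hilm_v)\) for \(v\le 1,t\); right multiplication is the easy direction you implicitly used. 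So: the first part is acceptable as a citation-level argument, but the second part has a genuine gap exactly where the cited corollary was asked to do work it cannot do.
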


\begin{proof}
  The first part follows from
  \cite{Buss-Exel-Meyer:Reduced}*{Lemma~4.5} (see also
  \cite{Kwasniewski-Meyer:Essential}*{Proposition~3.15}).  The
  second part follows from
  \cite{Kwasniewski-Meyer:Essential}*{Proposition~3.15 and
    Theorem~3.20}.
\end{proof}

\begin{definition}
  \label{def:reduced_crossed}
  The \emph{reduced crossed product} of the inverse semigroup
  action~\(\Hilm\) is the quotient
  \(A\rtimes_\red S\defeq (A\rtimes S)/\Null_E\).  So
  \(A\rtimes_\red S\) is the unique quotient of \(A\rtimes S\) for
  which the weak conditional expectation
  in~\eqref{eq:formula-cond.exp} factors through to a faithful weak
  conditional expectation
  \(E_\red\colon A\rtimes_\red S \to A''\).
\end{definition}

\begin{remark}
  \label{rem:cross_product_graded}
  The canonical maps from \(A\rtimes_\alg S\) to \(A\rtimes S\) and
  to \(A\rtimes_\red S\) are injective by
  \cite{Buss-Exel-Meyer:Reduced}*{Proposition~4.3}.  Both
  \(A\rtimes S\) and \(A\rtimes_\red S\) are naturally
  \(S\)\nb-graded with the same Fell bundle \((\Hilm_t)_{t\in S}\)
  over~\(S\).
\end{remark}

\begin{example}[Fell bundles over groupoids]
  \label{ex:Fell_bundles_over_groupoids}
  Let \(\A=(A_\gamma)_{\gamma\in H}\) be an upper semicontinuous
  Fell bundle over an étale groupoid~\(H\) with locally compact and
  Hausdorff unit space~\(X\).  Let
  \[
    \Bis(H)\defeq
    \setgiven{U\subseteq H}{U\text{ is open and }s,r\colon U\to X
      \text{ are injective}}
  \]
  be the set of \emph{bisections} of~\(H\).  It is a unital inverse
  semigroup, with multiplication and involution inherited from~\(H\)
  and with the unit~\(X\).  If \(U\in\Bis(H)\), let~\(A_U\) be the
  space of \(\Cont_0\)\nb-sections of the restriction
  of~\((A_\gamma)_{\gamma\in H}\) to~\(U\).  The spaces~\(A_U\) for
  \(U\in \Bis(H)\) form a Fell bundle over \(\Bis(H)\) in a natural
  way (see \cites{BussExel:Fell.Bundle.and.Twisted.Groupoids,
    Kwasniewski-Meyer:Essential}).  In particular, they are Hilbert
  bimodules over the \(\Cont_0(X)\)\nb-algebra \(A\defeq A_X\)
  corresponding to the bundle of \(\Cst\)\nb-algebras
  \((A_x)_{x\in X}\).  Let \(S\subseteq \Bis(H)\) be a unital
  inverse subsemigroup which is \emph{wide}
  in the sense that \(\bigcup S = H\) and \(U\cap V\) is a union of
  bisections in~\(S\) for all \(U,V\in S\).  Then the full and
  reduced \(\Cst\)\nb-algebras for the Fell bundles
  \((A_U)_{U\in S}\) and \((A_\gamma)_{\gamma\in H}\) are naturally
  isomorphic (see
  \cite{BussExel:Fell.Bundle.and.Twisted.Groupoids}*{Theorem~2.13},
  \cite{Buss-Meyer:Actions_groupoids}*{Corollary~5.6} and
  \cite{Buss-Exel-Meyer:Reduced}*{Theorem~8.11}).  The results in
  \cites{Buss-Meyer:Actions_groupoids, Buss-Exel-Meyer:Reduced} are
  formulated for saturated Fell bundles, but they remain true for
  non-saturated ones, with the same proofs (see
  also~\cite{Kwasniewski-Meyer:Essential}).  Thus if
  \((A_U)_{U\in S}\) is saturated --~which is always the case
  when~\((A_\gamma)_{\gamma\in H}\) is saturated~-- then
  \(A\rtimes S\cong \Cst(H,\A)\) and
  \(A\rtimes_\red S\cong \Cst_\red(H,\A)\).  If \((A_U)_{U\in S}\)
  is not saturated, we extend it to a saturated Fell bundle over an
  inverse semigroup~\(\tilde{S}\) consisting of all Hilbert
  bimodules of the form \(A_U J\), where \(U\in S\) and~\(J\) is an
  ideal in~\(A\).  Then \(A\rtimes \tilde{S}\cong \Cst(H,\A)\) and
  \(A\rtimes_\red \tilde{S}\cong\Cst_\red(H,\A)\) (see
  \cite{Kwasniewski-Meyer:Essential}*{Propositions 7.6 and~7.9}).
\end{example}

Fell line bundles over~\(H\) correspond to ``twists'' of~\(H\).  The
resulting section \(\Cst\)\nb-algebras are full and reduced
\emph{twisted} groupoid \(\Cst\)\nb-algebras.

\subsection{Dual groupoids and closed actions}
\label{sec:dual_groupoids}

We briefly recall how inverse semigroup actions are related to étale
groupoids.  Let \(\haction_t\colon \D_t\to \D_{t^*}\) for \(t\in S\)
be partial homeomorphism forming an action of~\(S\) on a topological
space~\(X\).  The arrows of the transformation groupoid
\(X\rtimes S\) are equivalence classes of pairs~\((t,x)\) for
\(x\in \D_t\subseteq X\), where \((t,x)\) and~\((t',x')\)
are equivalent if \(x=x'\) and there is \(v\in S\) with
\(v\le t, t'\) and \(x\in \D_v\).  The range and source maps
\(\rg,\s\colon X\rtimes S \rightrightarrows X\) and the
multiplication are defined by \(\rg([t,x])\defeq \haction_t(x)\),
\(\s([t,x])\defeq x\), and
\([t,\haction_u(x)] \cdot [u,x] = [t\cdot u,x]\).  And the topology
on~\(X\rtimes S\) is such that \([t,x]\mapsto x\) is a homeomorphism
from an open subset of~\(X\rtimes S\) onto~\(\D_t\) for each
\(t\in S\).  Thus~\(X\rtimes S\) is an étale topological groupoid.

Every étale topological groupoid~\(H\) arises in this way.  Namely,
if \(S\subseteq \Bis(H)\) is a unital and wide inverse subsemigroup,
then~\(H\) is naturally isomorphic to the transformation
groupoid~\(X\rtimes S\) of the associated action (see
\cite{Exel:Inverse_combinatorial}*{Propositions 5.3 and~5.4} or
\cite{Kwasniewski-Meyer:Essential}*{Proposition 2.2}).

\begin{remark}
  \label{rem:fell_bundles_over_groupoids3}
  The transformation groupoid described above differs, in general,
  from the groupoid of germs considered
  in~\cite{Renault:Cartan.Subalgebras}.  By
  \cite{Renault:Cartan.Subalgebras}*{Proposition~3.2}, a
  groupoid~\(H\) is isomorphic to the groupoid of germs of its
  bisections if and only if it is effective.
\end{remark}

Let~\(A\) be a \(\Cst\)\nb-algebra with an action~\(\Hilm\) of a
unital inverse semigroup~\(S\).  Let \(\dual{A}\) and
\(\widecheck{A}=\Prim(A)\) be the space of irreducible
representations and the primitive ideal space of~\(A\),
respectively.  Open subsets in~\(\dual{A}\) and in~\(\widecheck{A}\)
are in natural bijection with ideals in~\(A\).  The action of~\(S\)
on~\(A\) induces actions \(\dual{\Hilm}= (\dual{\Hilm}_t)_{t\in S}\)
and \(\widecheck{\Hilm}=(\widecheck{\Hilm}_t)_{t\in S}\) of~\(S\) by
partial homeomorphisms on \(\dual{A}\) and~\(\widecheck{A}\),
respectively (see \cite{Buss-Meyer:Actions_groupoids}*{Lemma~6.12},
\cite{Kwasniewski-Meyer:Essential}*{Section~2.3}).  The
homeomorphisms
\(\widecheck{\Hilm}_t\colon \widecheck{\s(\Hilm_t)} \congto
\widecheck{\rg(\Hilm_t)}\) and
\(\dual{\Hilm}_t\colon \dual{\s(\Hilm_t)} \congto
\dual{\rg(\Hilm_t)}\) are given by Rieffel's correspondence and
induction of representations, respectively.  Namely,
\(\dual{\s(\Hilm_t)}\) consists of all \(\pi\in\dual{A}\) that are
non-degenerate on \(\s(\Hilm_t) = \braket{\Hilm_t}{\Hilm_t}\), and
for \(\pi\in\dual{\s(\Hilm_t)}\), we let \(\dual{\Hilm_t}(\pi)\) be
the left multiplication action of~\(A\) on the tensor product
\(\Hilm_t \otimes_A \Hils_\pi\).  And
\(\widecheck{\s(\Hilm_t)} = \setgiven{\ker
  \pi}{\pi\in\dual{\s(\Hilm_t)}}\) consists of all primitive ideals
in \(A\) that do not contain \(\s(\Hilm_t)\) and
\(\widecheck{\Hilm}_t(\ker \pi)=\ker\widehat{\Hilm}_t(\pi)\) for
every \(\pi\in\dual{\s(\Hilm_t)}\).

\begin{definition}[\cites{Kwasniewski-Meyer:Stone_duality, Kwasniewski-Meyer:Essential}]
  \label{def:dual_groupoid}
  We call \(\dual{\Hilm}\) and~\(\widecheck{\Hilm}\) \emph{dual
    actions} to the action~\(\Hilm\) of~\(S\) on~\(A\).  The
  transformation groupoids \(\dual{A}\rtimes S\)
  and~\(\widecheck{A}\rtimes S\) are called \emph{dual groupoids}
  of~\(\Hilm\).
\end{definition}

\begin{example}
  \label{ex:fell_bundles_over_groupoids3}
  Let~\(A\) be a commutative \(\Cst\)\nb-algebra.  So
  \(A\cong \Cont_0(X)\) and
  \(\dual{A} \cong \widecheck{A} \cong X\).  Consider an inverse
  semigroup action on~\(A\).  The corresponding saturated Fell
  bundles are studied
  in~\cite{BussExel:Fell.Bundle.and.Twisted.Groupoids}, where they
  are called \emph{semi-Abelian}.  As shown
  in~\cite{BussExel:Fell.Bundle.and.Twisted.Groupoids}, they are
  equivalent to Fell line bundles over étale groupoids and to
  twisted étale groupoids.  The relevant groupoid is the dual
  groupoid of the action, \(H\defeq X\rtimes S\).  There is a unique
  twist~\(\Sigma\) over~\(H\) with an isomorphism
  \(\Cst(H,\Sigma)\cong A\rtimes S\), which descends to an
  isomorphism \(\Cst_\red(H,\Sigma)\cong A\rtimes_\red S\) (see
  also Example~\ref{ex:Fell_bundles_over_groupoids}).
\end{example}

\begin{definition}
  \label{def:Hausdorffness}
  An inverse semigroup action~\(\Hilm\) on a
  \(\Cst\)\nb-algebra~\(A\) is called \emph{closed} if the weak
  conditional expectation \(E\colon A\rtimes S\to A''\) given by
  \eqref{eq:formula-cond.exp} is \(A\)\nb-valued, that is, it is a
  genuine conditional expectation
  \(A\rtimes S\to A\subseteq A\rtimes S\).
\end{definition}

\begin{proposition}
  \label{prop:conditional_expectation}
  Let~\(\Hilm\) be an action of~\(S\) on~\(A\).  The following are
  equivalent:
  \begin{enumerate}
  \item \label{prop:conditional_expectation1}%
    the action~\(\Hilm\) is closed, that is,
    \(E(A\rtimes S)\subseteq A\);
  \item \label{prop:conditional_expectation2}%
    the subset of units~\(\widecheck{A}\) is closed in the arrow
    space \(\widecheck{A}\rtimes S\);
  \item \label{prop:conditional_expectation3}%
    the subset of units~\(\dual{A}\) is closed in the arrow space
    \(\dual{A}\rtimes S\);
  \item \label{prop:conditional_expectation4}%
    for each \(t\in S\), the ideal~\(I_{1,t}\) defined
    in~\eqref{eq:Itu} is complemented in~\(\s(\Hilm_t)\).
  \end{enumerate}
  Let \(t\in S\) and let
  \(I_{1,t}^\bot \defeq \setgiven{a\in A}{a\cdot I_{1,t}=0}\) be the
  annihilator of~\(I_{1,t}\).  If
  \ref{prop:conditional_expectation1}--\ref{prop:conditional_expectation4}
  hold, then~\(E|_{\Hilm_t}\) is the projection onto the first
  summand in the decomposition
  \begin{equation}
    \label{eq:decompose_Hilm_closed}
    \Hilm_t = \Hilm_t \cdot I_{1,t} \oplus \Hilm_t \cdot I_{1,t}^\bot
    \xrightarrow[\cong]{\vartheta_{1,t} \oplus \Id}
    I_{1,t} \oplus \Hilm_t \cdot I_{1,t}^\bot.
  \end{equation}
\end{proposition}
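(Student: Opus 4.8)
The plan is to prove \((2)\Leftrightarrow(4)\) and \((3)\Leftrightarrow(4)\) by one common topological argument, then to establish \((4)\Rightarrow(1)\) together with formula~\eqref{eq:decompose_Hilm_closed} by a direct computation, and finally to handle the analytically delicate implication \((1)\Rightarrow(4)\). For the topological equivalences, recall that the arrow space \(\dual{A}\rtimes S\) is covered by the open copies \(U_t\cong\dual{\s(\Hilm_t)}\), \(\pi\mapsto[t,\pi]\). First I would observe that \([t,\pi]\) is a unit precisely when there is an idempotent \(v\le t\) with \(\pi\) non-degenerate on \(\s(\Hilm_v)\), that is, when \(\pi\in\dual{I_{1,t}}\) for the ideal in~\eqref{eq:Itu}; hence the units lying in \(U_t\) form the open subset corresponding to \(\dual{I_{1,t}}\subseteq\dual{\s(\Hilm_t)}\). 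Since the \(U_t\) cover the arrow space, \(\dual{A}\) is closed if and only if \(\dual{I_{1,t}}\) is clopen in \(\dual{\s(\Hilm_t)}\) for every \(t\). Under the bijection between open subsets of \(\dual{A}\) and ideals of~\(A\), a clopen subset corresponds exactly to a complemented ideal, which gives \((3)\Leftrightarrow(4)\). The identical argument with \(\widecheck{I_{1,t}}\subseteq\widecheck{\s(\Hilm_t)}\), using that open subsets of \(\widecheck{A}\) also correspond to ideals, yields \((2)\Leftrightarrow(4)\); thus all three reduce to the same complementedness statement.

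For \((4)\Rightarrow(1)\) and the formula, assume \(I_{1,t}\) is complemented and write the orthogonal decomposition \(\Hilm_t=\Hilm_t\cdot I_{1,t}\oplus\Hilm_t\cdot I_{1,t}^\bot\). For \(\xi\) in the first summand, \(\xi\cdot u_i\to\xi\) in norm, so~\eqref{eq:formula-cond.exp} gives \(E(\xi\delta_t)=\vartheta_{1,t}(\xi)\in I_{1,t}\subseteq A\); for \(\xi\) in the second summand, \(\xi\cdot u_i=0\), so \(E(\xi\delta_t)=0\). Hence \(E|_{\Hilm_t}\) is the composite in~\eqref{eq:decompose_Hilm_closed}, and since the slices \(\Hilm_t\delta_t\) generate \(A\rtimes S\) and \(E\) is a contractive \(A\)\nb-bimodule map, \((1)\) follows.

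The hard direction is \((1)\Rightarrow(4)\). Set \(m_\xi\defeq E(\xi\delta_t)\in\Mult(I_{1,t})\); using that \(\vartheta_{1,t}\) is a right module map and that \(u_i a\to a\), one computes \(m_\xi\cdot a=\vartheta_{1,t}(\xi\cdot a)\) for \(a\in I_{1,t}\), so \(m_\xi\) is exactly the left multiplier \(a\mapsto\vartheta_{1,t}(\xi\cdot a)\). The crucial preliminary is the identity \(A\cap\Mult(I_{1,t})=I_{1,t}\) inside \(A''\): if \(z\) is the central support projection with \(I_{1,t}''=zA''\), then \(\Mult(I_{1,t})\subseteq zA''\), and for \(a\in A\) one has \(a\in zA''\) iff \(a\,I_{1,t}^\bot=0\) iff \(a\in I_{1,t}^{\bot\bot}=I_{1,t}\). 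Consequently hypothesis \((1)\), namely \(E(\Hilm_t\delta_t)\subseteq A\), is equivalent to \(m_\xi\in I_{1,t}\) for every \(\xi\in\Hilm_t\). Given this, \(\vartheta_{1,t}(\xi\cdot u_i)=m_\xi u_i\to m_\xi\) in norm, so by the isometry \(\vartheta_{1,t}\) on \(\Hilm_t\cdot I_{1,t}\) the net \(\xi\cdot u_i\) is norm-Cauchy with limit \(\xi_1\in\Hilm_t\cdot I_{1,t}\); moreover \((\xi-\xi_1)\cdot a=0\) for all \(a\in I_{1,t}\), placing \(\xi-\xi_1\in\Hilm_t\cdot I_{1,t}^\bot\). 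Thus \(\Hilm_t=\Hilm_t\cdot I_{1,t}\oplus\Hilm_t\cdot I_{1,t}^\bot\), whence \(\s(\Hilm_t)=\braket{\Hilm_t}{\Hilm_t}=I_{1,t}\oplus I_{1,t}^\bot\) is complemented.

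The main obstacle is the analytic core of \((1)\Rightarrow(4)\): identifying \(m_\xi\) as an honest multiplier of \(I_{1,t}\) acting by \(a\mapsto\vartheta_{1,t}(\xi\cdot a)\), and proving the intersection identity \(A\cap\Mult(I_{1,t})=I_{1,t}\) in \(A''\) so that \(A\)\nb-valuedness forces values in \(I_{1,t}\) itself; once this is in place, the passage from the strict limit in~\eqref{eq:formula-cond.exp} to a genuine norm limit is what lets one extract the orthogonal decomposition of \(\Hilm_t\). The remaining verifications, that the two summands are orthogonal and that \(\vartheta_{1,t}\) restricts to the stated isometry, are routine given the imprimitivity bimodule structure and~\eqref{eq:Def-thetas}.
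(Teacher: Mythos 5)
Your reduction of \textup{(2)}, \textup{(3)}, \textup{(4)} to clopenness of \(\dual{I_{1,t}}\) inside \(\dual{\s(\Hilm_t)}\) is correct, as is your proof of \textup{(4)}\(\Rightarrow\)\textup{(1)} together with the identification of \(E|_{\Hilm_t}\) with the projection in~\eqref{eq:decompose_Hilm_closed}. Since the paper disposes of this proposition by citing \cite{Buss-Exel-Meyer:Reduced}*{Theorem~6.5 and Proposition~6.3} and \cite{Kwasniewski-Meyer:Essential}*{Proposition~3.18}, your self-contained argument is a genuinely different route, and its architecture for \textup{(1)}\(\Rightarrow\)\textup{(4)} --- realise \(m_\xi\defeq E(\xi\delta_t)\) as the multiplier \(a\mapsto\vartheta_{1,t}(\xi a)\) of \(I_{1,t}\), deduce \(m_\xi\in I_{1,t}\) from \(A\)\nb-valuedness, upgrade the strict limit to a norm limit, and read off the decomposition of~\(\Hilm_t\) --- is the right one.

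However, your justification of the ``crucial preliminary'' \(A\cap\Mult(I_{1,t})=I_{1,t}\) is broken. For a general closed ideal \(I\idealin A\) with support projection \(z\in A''\), neither does \(a\,I^\bot=0\) imply \(a\in zA''\), nor is \(I^{\bot\bot}=I\). Take \(A=\Cont([0,1])\) and \(I=\setgiven{f}{f(1/2)=0}\): then \(I^\bot=0\), so \(I^{\bot\bot}=A\neq I\) and every \(a\in A\) kills \(I^\bot\), whereas \(az=a\) holds only for \(a\in I\), because the bidual of the quotient map \(A\to A/I\cong\C\) is \(x\mapsto x(1-z)\). The step in your chain that actually fails is the final equality \(I_{1,t}^{\bot\bot}=I_{1,t}\): the true parts of your argument deliver only \(m_\xi\in I_{1,t}^{\bot\bot}\), and for an element of \(I^{\bot\bot}\setminus I\) the net \(m_\xi u_i\) need not converge in norm (in the example, \(\norm{1\cdot u_i-1}\geq \abs{u_i(1/2)-1}=1\) for all~\(i\)), so the Cauchy argument producing~\(\xi_1\) collapses. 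The preliminary itself is true, with a short correct proof: \(\Mult(I_{1,t})\subseteq I_{1,t}''=zA''\), and \(A\cap zA''=I_{1,t}\) because the composition \(A\to A''\onto(1-z)A''\) factors as \(A\onto A/I_{1,t}\into (A/I_{1,t})''\cong (1-z)A''\), so \(a=az\) for \(a\in A\) forces \(a\in\ker(A\to A/I_{1,t})=I_{1,t}\). With that substitution (and the cosmetic correction that your last display should read \(\s(\Hilm_t)=I_{1,t}\oplus\bigl(\s(\Hilm_t)\cap I_{1,t}^\bot\bigr)\), as \(I_{1,t}^\bot\) need not lie in \(\s(\Hilm_t)\)), your proof of \textup{(1)}\(\Rightarrow\)\textup{(4)}, and hence of the whole proposition, goes through.
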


\begin{proof}
  Combine \cite{Buss-Exel-Meyer:Reduced}*{Theorem~6.5 and
    Proposition~6.3} and
  \cite{Kwasniewski-Meyer:Essential}*{Proposition~3.18}.
\end{proof}

\subsection{Generalised Fourier coefficients for closed actions}
\label{sec:Fourier}

Let~\(G\) be a group and let~\(B\) be a \(\Cst\)\nb-algebra with a
topological \(G\)\nb-grading \((B_t)_{t\in G}\).  Besides the
conditional expectation \(E\colon B\to B_1\), there are projections
\(E_t\colon B\to B_t\) for all \(t\in G\), with \(E = E_1\) (see
\cite{Exel:Partial_dynamical}*{Corollary~19.6}).  We now define
analogous maps for a closed action of an inverse semigroup~\(S\) by
Hilbert bimodules.  We cannot expect these to be defined on all
of~\(A\rtimes_\red S\) because this is impossible in very easy
examples: let \(S=\{0,1\}\), \(A=B = B_1\), and let~\(B_0\) be an
ideal in~\(A\) that is not complemented.

\begin{proposition}
  \label{pro:harmonic_projections}
  Let~\(S\) be a unital inverse semigroup and let
  \((\Hilm_t,\mu_{t,u})_{t,u\in S}\) be a closed action of~\(S\) on
  a \(\Cst\)\nb-algebra~\(A\) by Hilbert bimodules.  For each
  \(t\in S\) there is a unique contractive linear map
  \[
    E_t\colon \rg(\Hilm_t)\cdot (A\rtimes_\red S)
    \onto \Hilm_t\subseteq \rg(\Hilm_t)\cdot (A\rtimes_\red S)
  \]
  that satisfies \(x^* E_t(y) = E(x^* y)\) for all \(x\in \Hilm_t\),
  \(y\in \rg(\Hilm_t)\cdot (A\rtimes_\red S)\).  The map~\(E_t\) is
  idempotent and \(A\)\nb-bilinear.  The map~\(E_1\) for the
  unit \(1\in S\) is the canonical conditional expectation~\(E\).
  Let \(x\in A\rtimes_\red S\).  Then \(x=0\) if and only if
  \(E_t(a x)=0\) for all \(t\in S\) and all \(a\in \rg(\Hilm_t)\).
\end{proposition}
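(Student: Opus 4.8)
The plan is to realise each \(E_t\) as an orthogonal projection in a GNS-type Hilbert module and to read off the asserted properties from that description. Uniqueness is immediate: since the action is closed, \(E\) restricts to the identity on \(A\) and takes values in \(A\), so for \(\xi,\eta\in\Hilm_t\) one has \(\xi^*\eta\in\s(\Hilm_t)\subseteq A\) and \(E(\xi^*\eta)=\xi^*\eta=\braket{\xi}{\eta}\). Thus the defining identity \(x^*E_t(y)=E(x^*y)\) says precisely that \(\braket{x}{E_t(y)}=E(x^*y)\) for every \(x\in\Hilm_t\); as \(\braket{\eta}{\eta}=0\) forces \(\eta=0\), the right inner products against all \(x\in\Hilm_t\) separate the points of \(\Hilm_t\), so \(E_t(y)\) is uniquely determined by \(y\).

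For existence I would turn \(A\rtimes_\red S\) into a right Hilbert \(A\)\nb-module \(\L\), the separated completion for the \(A\)\nb-valued inner product \(\braket{b}{c}\defeq E(b^*c)\); here closedness is essential, as it makes \(E\) genuinely \(A\)\nb-valued. By the previous paragraph the inclusion \(\Hilm_t\hookrightarrow\L\) is isometric and preserves inner products, so \(\Hilm_t\) is a closed submodule. To project onto it, choose a special approximate unit \((x_k)\) of \(\Hilm_t\) for which the finite partial sums \(u_F=\sum_{k\in F}x_k x_k^*\) form an increasing approximate unit of \(\rg(\Hilm_t)\cong\Comp(\Hilm_t)\) converging strictly to the identity of \(\Mult(\rg(\Hilm_t))\); such units exist for an arbitrary Hilbert module by the work of Brown--Mingo--Shen, reducing to a frame in the separable case. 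The rank operators \(\theta_{x_k,x_k}\colon\zeta\mapsto x_k E(x_k^*\zeta)\) on \(\L\) are positive, their partial sums are increasing, and computing the norm through the Gram matrix \((x_j^*x_k)_{j,k}\) --~which is the same whether the \(x_k\) are regarded in \(\L\) or in \(\Hilm_t\)~-- bounds them by \(1\). A bounded increasing net of positive adjointable operators converges strictly, so \(P\defeq\stlim_F\sum_{k\in F}\theta_{x_k,x_k}\) exists with \(0\le P\le 1\). On \(\Hilm_t\) the net acts as \(u_F\), hence \(P\) acts as the identity, while \(\operatorname{ran}P\subseteq\Hilm_t\); thus \(P\) is the orthogonal projection of \(\L\) onto \(\Hilm_t\). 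I then define \(E_t\) as the restriction of \(P\) to \(\rg(\Hilm_t)(A\rtimes_\red S)\); for such \(y\), \(E_t(y)=\lim_F\sum_{k\in F}x_k E(x_k^*y)\) is a norm limit of elements of \(\Hilm_t\), and since the \(\L\)\nb-norm and the algebra norm agree on \(\Hilm_t\) this limit lies in \(\Hilm_t\subseteq A\rtimes_\red S\). The one genuinely delicate point is this strict convergence in the non-separable case, which is exactly why special approximate units replace frames.

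The listed properties then fall out. Contractivity is \(\|E_t(y)\|=\|Py\|_\L\le\|y\|_\L=\|E(y^*y)\|^{1/2}\le\|y\|\). For the defining identity, \(x^*E_t(y)=\lim_F\sum_k x^*x_k E(x_k^*y)=\lim_F E(x^*u_F y)=E(x^*y)\), using left \(A\)\nb-linearity of \(E\) and \(x^*u_F\to x^*\). Idempotence follows from \(P|_{\Hilm_t}=\Id\) together with \(\Hilm_t\subseteq\rg(\Hilm_t)(A\rtimes_\red S)\); \(A\)\nb-bilinearity follows from the defining identity and uniqueness, e.g.\ \(\braket{x}{E_t(ya)}=E(x^*ya)=\braket{x}{E_t(y)}a\) and \(\braket{x}{aE_t(y)}=\braket{a^*x}{E_t(y)}=E(x^*ay)=\braket{x}{E_t(ay)}\). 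Finally, for \(t=1\) we have \(\Hilm_1=A=\rg(\Hilm_1)\), and \(a^*E_1(y)=E(a^*y)=a^*E(y)\) for all \(a\in A\) gives \(E_1=E\).

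For the separation statement only the nontrivial direction needs work. Assume \(E_t(ax)=0\) for all \(t\in S\) and all \(a\in\rg(\Hilm_t)\). The defining identity applied to \(w=ax\) yields \(0=y^*E_t(ax)=E(y^*ax)\) for every \(y\in\Hilm_t\). As \(y\) ranges over \(\Hilm_t\) and \(a\) over \(\rg(\Hilm_t)\), the products \(y^*a\) fill up \(\Hilm_{t^*}\cdot\rg(\Hilm_t)=\Hilm_{t^*}\); letting \(t\) run through \(S\) this gives \(E(zx)=0\) for every \(z\in\Hilm_s\) and every \(s\in S\). Since \(\sum_s\Hilm_s\) is dense in \(A\rtimes_\red S\) and \(E\) is continuous, \(E(bx)=0\) for all \(b\), and in particular \(E(x^*x)=0\). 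As the action is closed, \(E=E_\red\) is a faithful conditional expectation on \(A\rtimes_\red S\), so \(x=0\).
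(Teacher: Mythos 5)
Your overall scheme is the same as the paper's: you build the same Hilbert \(A\)\nb-module \(\L\) from \(E\), characterise \(E_t\) as an orthogonal projection, and your uniqueness argument, the Gram\nb-matrix bound \(\norm[\big]{\sum_{k\in F}\theta_{x_k,x_k}}\le 1\), the verification of the listed properties, and the final separation argument are all correct. The gap sits in the one step that carries the real content: existence. The principle you invoke --- ``a bounded increasing net of positive adjointable operators converges strictly'' --- is false, and the intermediate statement you deduce from it, namely that \(P\) exists as the orthogonal projection of \emph{all of} \(\L\) onto \(\Hilm_t\), is false for closed actions in general. The paper's own example right before the proposition refutes it: take \(S=\{0,1\}\), \(A=\Cont([0,1])\), \(\Hilm_1=A\), \(\Hilm_0=\Cont_0((0,1])\). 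This action is closed, \(A\rtimes_\red S=A\), \(E=\Id\), so \(\L=A\). With \(u_n(x)\defeq\min(1,nx)\), \(u_0\defeq0\) and \(x_n\defeq(u_n-u_{n-1})^{1/2}\in\Hilm_0\), the partial sums \(\sum_{k\le n}x_kx_k^*=u_n\) form an increasing approximate unit of \(\rg(\Hilm_0)=\Hilm_0\cong\Comp(\Hilm_0)\) bounded by \(1\) --- all the properties of your special approximate unit, indeed a frame. Yet \(\sum_{k\le n}\theta_{x_k,x_k}\) is multiplication by \(u_n\) on \(\L=\Cont([0,1])\), which has no strict limit (apply it to the constant function \(1\)), and the orthogonal projection of \(\L\) onto \(\Hilm_0\) does not exist: the orthogonal complement of \(\Hilm_0\) in \(\L\) is zero while \(\Hilm_0\neq\L\). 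This failure is exactly why the proposition only asserts \(E_t\) on the corner \(\rg(\Hilm_t)\cdot(A\rtimes_\red S)\).

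So the construction must be carried out on that corner from the start, and even there the convergence of your net \(P_F\) is not a general Hilbert\nb-module fact: it is equivalent to the complementability of \(\Hilm_t\), i.e.\ to what is being proved, and must be verified on a dense subspace. That verification is the heart of the paper's proof, and it uses closedness through the fibrewise decomposition \eqref{eq:decompose_Hilm_closed} of Proposition~\ref{prop:conditional_expectation} --- not merely through the \(A\)\nb-valuedness of \(E\), which is the only way you use it. Concretely, the submodules \(\rg(\Hilm_t)\cdot\Hilm_u=\Hilm_t\cdot\Hilm_{t^*u}\), \(u\in S\), are linearly dense in the corner, and each of them splits orthogonally as \(\Hilm_t\cdot I_{1,t^*u}\oplus\Hilm_t\cdot\Hilm_{t^*u}\cdot I_{1,t^*u}^\bot\), where the first summand lies in \(\Hilm_t\) and the second is orthogonal to \(\Hilm_t\) because \(E\) vanishes on \(\Hilm_{t^*u}\cdot I_{1,t^*u}^\bot\). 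This defines the projection with norm \(\le1\) on a dense subspace, hence everywhere; it is also precisely what would make your \(P_F\) converge on a dense set, which together with your (correct) bound \(\norm{P_F}\le1\) would rescue your argument. A secondary inaccuracy: the ``special approximate units'' of Brown--Mingo--Shen are approximate units whose \emph{members} are finite sums \(\sum_i m_{\lambda,i}m_{\lambda,i}^*\) with the family depending on \(\lambda\); a single fixed family whose increasing partial sums form an approximate unit is a frame, which exists for countably generated modules but not in general. This point is moot, however, since the example above admits a frame and your argument still fails there.
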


\begin{proof}
  The conditional expectation~\(E\) defines an \(A\)\nb-valued inner
  product \(\braket{x}{y} \defeq E(x^* y)\) on~\(A\rtimes_\red S\).
  Let \(\ell^2(S,A)\) denote the resulting Hilbert \(A\)\nb-module
  completion of \(A\rtimes_\red S\).  If \(x\in \Hilm_t\),
  \(y\in \rg(\Hilm_t)\cdot (A\rtimes_\red S)\), then
  \(\braket{x}{E_t(y)} = x^* E_t(y)\) and
  \(\braket{x}{y} = E(x^* y)\).  So the formula defining~\(E_t\)
  says exactly that~\(E_t\) should be the orthogonal projection
  onto~\(\Hilm_t\) in \(\Bound(\rg(\Hilm_t) \cdot \ell^2(S,A))\),
  restricted to \(\rg(\Hilm_t)\cdot(A\rtimes_\red S)\).  To
  construct~\(E_t\), it remains to show that~\(\Hilm_t\) is
  complemented as a Hilbert \(A\)\nb-submodule of
  \(\rg(\Hilm_t)\cdot \ell^2(S,A)\).

  The Hilbert submodules \(\rg(\Hilm_t)\cdot \Hilm_u\) for
  \(u\in S\) are linearly dense in
  \(\rg(\Hilm_t)\cdot(A\rtimes_\red S)\).  The orthogonal
  decomposition
  \(\Hilm_{t^* u} = I_{1,t^* u} \oplus \Hilm_{t^* u}\cdot I_{1,t^*
    u}^\bot\) in~\eqref{eq:decompose_Hilm_closed} implies an
  orthogonal decomposition
  \begin{equation}
    \label{eq:decompose_Bttstaru}
    \rg(\Hilm_t)\cdot \Hilm_u
    = \Hilm_t \cdot \Hilm_t^* \cdot \Hilm_u
    = \Hilm_t \cdot \Hilm_{t^* u}
    = \Hilm_t\cdot I_{1,t^* u} \oplus
    \Hilm_t \cdot \Hilm_{t^* u}\cdot I_{1,t^* u}^\bot.
  \end{equation}
  The summand \(\Hilm_t\cdot I_{1,t^* u}\) is contained
  in~\(\Hilm_t\).  In fact, it is equal to
  \(\Hilm_t\cdot I_{t, u}= \Hilm_t\cap \Hilm_u= \Hilm_t\cap
  (\rg(\Hilm_t)\Hilm_u)\), where the intersection is taken in
  \(A\rtimes_\red S\).  We claim that the other summand is
  orthogonal to~\(\Hilm_t\).  Indeed, if \(x_t,y_t\in \Hilm_t\),
  \(z_{t^* u}\in \Hilm_{t^* u}\cdot I_{1,t^* u}^\bot\), then
  \[
  \braket{y_t\cdot z_{t^* u}}{x_t}
  = E(z_{t^* u}^* \cdot y_t^*\cdot x_t)
  = E(z_{t^* u})^*\cdot y_t^*x_t
  = 0
  \]
  because~\(E\) vanishes on \(\Hilm_{t^* u}\cdot I_{1,t^* u}^\bot\)
  by~\eqref{eq:formula-cond.exp}.  So the orthogonal projection
  to~\(\Hilm_t\) exists on \(\rg(\Hilm_t)\cdot \Hilm_u\) and is the
  projection onto the first summand in the decomposition
  in~\eqref{eq:decompose_Bttstaru}.  Since the submodules
  \(\rg(\Hilm_t)\cdot \Hilm_u\) for \(u\in S\) are linearly dense in
  \(\rg(\Hilm_t)\cdot \ell^2(S,A)\), it follows that~\(\Hilm_t\) is
  complemented in \(\rg(\Hilm_t)\cdot \ell^2(S,A)\).

  The orthogonal projection to~\(\Hilm_t\) is contractive for the
  Hilbert \(A\)\nb-module norm on \(\rg(\Hilm_t)\cdot \ell^2(S,A)\),
  which is dominated by the \(\Cst\)\nb-norm on
  \(\rg(\Hilm_t)\cdot (A\rtimes_\red S)\).  Hence~\(E_t\) is
  contractive as asserted.  It is idempotent and right
  \(A\)\nb-linear by construction.  It is left \(A\)\nb-linear
  because the submodules in question are all invariant under left
  multiplication by~\(A\).  The uniqueness of~\(E_1\) and the
  \(A\)\nb-linearity of~\(E\) imply \(E_1=E\).

  We show the last claim in the proposition.  Let
  \(x\in A\rtimes_\red S\) satisfy \(E_t(a x)=0\) for all
  \(t\in S\), \(a\in \rg(\Hilm_t)\).  Let \(y\in \Hilm_t\).  The
  Cohen--Hewitt Factorisation Theorem gives \(a\in \rg(\Hilm_t)\)
  and \(y'\in \Hilm_t\) with \(y = a y'\).  We compute
  \(\braket{x}{y} = \braket{a^* x}{y'} = \braket{E_t(a^* x)}{y'} =
  0\).  Thus~\(x\) is orthogonal to~\(\Hilm_t\) for all \(t\in S\).
  This implies that~\(x\) is mapped to~\(0\) in \(\ell^2(S,A)\)
  because the linear span of~\(\Hilm_t\) for \(t\in S\) is dense.
  Then \(E_\red(x^* x) = \braket{x}{x} = 0\), and this implies
  \(x=0\) because~\(E_\red\) is faithful.
\end{proof}

\section{When the conditional expectation preserves the grading}
\label{subsec:preserve_ce_regular}

Before we tackle the main issues in Exel's theory of noncommutative
Cartan subalgebras, we prove a  more basic result.  Let~\(S\) be
a unital inverse semigroup and let~\(B\) be a \(\Cst\)\nb-algebra
with an \(S\)\nb-grading \((B_t)_{t\in S}\).  Let \(A\defeq B_1\) be
its unit fibre and view the grading as an action of~\(S\) on~\(A\)
by Hilbert bimodules.  The grading gives a representation of this
action, which induces a surjective \Star{}homomorphism
\(\pi\colon A\rtimes S \to B\).  In addition, let \(E\colon B\to A\)
be an almost faithful conditional expectation.  If
\(E\circ \pi\colon A\rtimes S\to A\) is equal to the canonical weak
conditional expectation on the crossed product,
\(E_0\colon A\rtimes S \to A''\), then~\(\pi\) descends to a
\Star{}isomorphism \(A\rtimes_\red S \congto B\) that intertwines
the canonical conditional expectations.  And then~\(E_0\) is
\(A\)\nb-valued, that is, the action has to be closed.

So we need conditions that are sufficient for \(E\circ \pi = E_0\).
We are going to show that a conditional expectation on~\(B\)
satisfies \(E\circ \pi = E_0\) if and only if the \(S\)\nb-grading
is ``wide'' and~\(E\) ``preserves'' it.  We will use this criterion
to characterise noncommutative Cartan subalgebras.

Our main theorems need a genuine conditional expectation
\(B\to A\).  In general, however, an inverse semigroup crossed
product only carries an \(A''\)\nb-valued conditional expectation.
Other interesting targets for conditional expectations are the
injective hull or the local multiplier algebra of~\(A\)
(see~\cite{Kwasniewski-Meyer:Essential}).  We prove some technical
lemmas for generalised conditional expectations taking values in an
arbitrary \(\Cst\)\nb-algebra~\(\tilde{A}\) containing~\(A\).  This
extra generality is not going to be used in this article.  We hope,
however, that it will prove useful for future generalisations of the
theory to non-closed actions.  The theory of ``weak'' Cartan
subalgebras for non-Hausdorff groupoids is far more difficult than
the usual theory of Cartan subalgebras and only in its infancy (see
\cites{Exel-Pitts:Weak_Cartan,
  Clark-Exel-Pardo-Sims-Starling:Simplicity_non-Hausdorff}).

\begin{definition}[compare
  \cite{Exel:noncomm.cartan}*{Definition~13.7}]
  \label{def:admissible_grading}
  An \(S\)\nb-grading \((B_t)_{t\in S}\) is \emph{wide} if, for all
  \(t\in S\), \(\sum_{v\le 1,t} B_v\) is dense in \(B_t \cap A\).
\end{definition}

\begin{example}
  The tautological grading of~\(B\) over the slice inverse semigroup
  \(\Slice(A,B)\) is always wide.  All gradings by groups are wide.
\end{example}

\begin{definition}
  \label{def:E_preserves_grading}
  A conditional expectation \(E\colon B\to A\) on a
  \(\Cst\)\nb-algebra~\(B\) with an \(S\)\nb-grading
  \((B_t)_{t\in S}\) \emph{preserves the grading} if
  \(E(B_t) \subseteq B_t\) for all \(t\in S\).
\end{definition}

\begin{lemma}
  \label{lem:centre_mult}
  Let \(A\to B\) be a non-degenerate embedding.  Extend it to a
  homomorphism \(\Mult(A) \to \Mult(B)\).  Let \(\tau\in\Mult(B)\).
  If~\(\tau\) commutes with~\(A\), then it commutes
  with~\(\Mult(A)\).
\end{lemma}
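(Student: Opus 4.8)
The plan is to reduce the identity $\tau m = m\tau$, for $m\in\Mult(A)$, to a computation on~$B$ and then to push everything down to the commutation hypothesis on~$A$. First I would recall how the extension $\Mult(A)\to\Mult(B)$ is built and isolate the single structural property I need. Since $A\to B$ is non-degenerate, the embedding $A\to\Mult(B)$ is a non-degenerate homomorphism, and its canonical extension $\Mult(A)\to\Mult(B)$ is characterised by $m\cdot a = ma$ and $a\cdot m = am$ for all $a\in A$, $m\in\Mult(A)$, where the products on the left are formed in $\Mult(B)$ and those on the right in $\Mult(A)$. Concretely: for $m\in\Mult(A)$ and $a\in A$, the product $ma$ computed in $\Mult(B)$ again lies in the image of~$A$. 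This is the only fact about the extension that the argument uses, so its possible non-injectivity is irrelevant.

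Second, I would note that $\Mult(B)$ acts faithfully on~$B$ by left multiplication: if $z\in\Mult(B)$ satisfies $zb=0$ for all $b\in B$, then for every $b\in B$ the element $bz\in B$ satisfies $(bz)b'=b(zb')=0$ for all $b'\in B$, hence $bz=0$; thus both the left and right actions of~$z$ vanish and $z=0$. Consequently, to prove $\tau m = m\tau$ it suffices to check $(\tau m)b=(m\tau)b$ for all $b\in B$. By non-degeneracy $\overline{AB}=B$, and the Cohen--Hewitt Factorisation Theorem lets me assume $b=ab'$ with $a\in A$, $b'\in B$.

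Third comes the actual computation, carried out entirely inside the associative algebra $\Mult(B)$. Using that $ma\in A$ and that $\tau$ commutes with every element of~$A$, I obtain
\[
  \tau m (ab') = \tau(ma)b' = (ma)\tau b' = m(a\tau)b' = m(\tau a)b' = m\tau(ab'),
\]
where the second equality applies the hypothesis to $ma\in A$ and the fourth applies it to $a\in A$. This yields $(\tau m)b=(m\tau)b$ for all $b\in B$, whence $\tau m=m\tau$ by the faithfulness of the left action, proving that $\tau$ commutes with all of~$\Mult(A)$.

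I do not expect a genuine obstacle here; the only point requiring care is the bookkeeping of the extension, namely that the product $ma$ formed in $\Mult(B)$ coincides with the product in $\Mult(A)$ and therefore lands in~$A$. Once that defining property is recorded, the faithful-action reduction together with the factorisation collapses the statement into the one-line chain of equalities above.
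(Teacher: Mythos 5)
Your proof is correct and takes essentially the same route as the paper's: both reduce \(\tau m = m\tau\) to the chain \((\tau m)(ab') = \tau(ma)b' = (ma)\tau b' = m(\tau a)b' = (m\tau)(ab')\), using that \(ma\in A\) and that \(B = A\cdot B\) by non-degeneracy (Cohen--Hewitt). The only difference is that you make explicit the faithfulness of the left action of \(\Mult(B)\) on \(B\), which the paper leaves implicit.
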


\begin{proof}
  Let \(m\in\Mult(A)\) and \(x\in A\).  Then
  \((\tau\cdot m)\cdot x = \tau (m x) = (m x) \tau = (m\cdot
  \tau)\cdot x\).  Since \(B = A\cdot B\), this implies
  \(\tau\cdot m = m\cdot \tau\) in \(\Mult(B)\).
\end{proof}

\begin{lemma}
  \label{lem:Exels_lemma}
  Let \(\tilde{A}\supseteq A\subseteq B\) be \(\Cst\)\nb-inclusions
  and let \(E\colon B \to \tilde{A}\) be a generalised expectation.
  Let \(M\in \Slice(A,B)\) be a slice and let
  \(\s(M) = M^* M \subseteq A\) be its source ideal.
  Then
  \(\tilde{A}_{M} \defeq \s(M)\cdot \tilde{A} \cdot
  \s(M)\) is a hereditary \(\Cst\)\nb-subalgebra
  in~\(\tilde{A}\).  There is a unique positive element
  \(\tau\in \Mult(\tilde{A}_{M})\) with \(\norm{\tau}\le 1\) and
  \(\tau\cdot m^* n = E(m^*)E(n) = m^* n\cdot \tau\) for all
  \(m,n\in M\).  And~\(\tau\) commutes with~\(A\) or, briefly,
  \(\tau\in \Mult(\tilde{A}_{M})\cap A'\).
\end{lemma}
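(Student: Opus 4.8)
The plan is to handle the three assertions—heredity of \(\tilde{A}_{M}\), existence and uniqueness of \(\tau\), and its commutation with \(A\)—in turn, with the construction of \(\tau\) being the heart of the matter. Throughout I would use the slice structure: \(\s(M)=\overline{M^*M}\) and \(\rg(M)=\overline{MM^*}\) are closed ideals in \(A\), the bimodule satisfies \(M=\rg(M)\cdot M=M\cdot\s(M)\), and \(E\) is an \(A\)\nb-bimodule map.

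I would first dispose of heredity. Since \(\s(M)\) is a \(\Cst\)\nb-subalgebra of \(\tilde{A}\), the space \(\tilde{A}_{M}=\overline{\s(M)\,\tilde{A}\,\s(M)}\) is self-adjoint and closed under multiplication (absorbing an inner \(\s(M)\tilde{A}\s(M)\) into \(\tilde{A}\)), hence a \(\Cst\)\nb-subalgebra; and \(\tilde{A}_{M}\,\tilde{A}\,\tilde{A}_{M}\subseteq\tilde{A}_{M}\) by the same absorption, so \(\tilde{A}_{M}\) is hereditary by the standard criterion that a \(\Cst\)\nb-subalgebra \(D\) is hereditary iff \(D\tilde{A}D\subseteq D\). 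I would also record two facts used repeatedly: any approximate unit \((u_\lambda)\) of the ideal \(\s(M)\idealin A\) is an approximate unit for \(\tilde{A}_{M}\), and \(E(m)\in\overline{\rg(M)\,\tilde{A}\,\s(M)}\) for \(m\in M\), whence \(E(m^*)E(n)\in\tilde{A}_{M}\) for all \(m,n\in M\).

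For the construction of \(\tau\), I would choose an approximate unit of \(\s(M)\) of the special form \(u_\lambda=\sum_i m_{\lambda,i}^* m_{\lambda,i}\) with \(m_{\lambda,i}\in M\) (from a frame of the Hilbert module \(M\), or the special approximate units of Brown--Mingo--Shen in the non-separable case) and set \(\tau_\lambda\defeq\sum_i E(m_{\lambda,i}^*)E(m_{\lambda,i})\in\tilde{A}_{M}^{+}\). The crux of the whole lemma is the identity \(\tau_\lambda\cdot(m^* n)=u_\lambda\cdot E(m^*)E(n)\) for \(m,n\in M\). It follows by pushing the inner \(E(m_i)\) past \(m^* n\) using the normaliser relations \(m_i m^*\in\rg(M)\subseteq A\) and \(m_i^* m_i\in\s(M)\subseteq A\) together with the \(A\)\nb-bilinearity of \(E\), giving \(E(m_i)(m^* n)=(m_i m^*)E(n)\) and then \(E(m_i^*)(m_i m^*)=(m_i^* m_i)E(m^*)\); summing over \(i\) yields the claim. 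Symmetrically \((m^* n)\tau_\lambda=E(m^*)E(n)\cdot u_\lambda\). Since \(E(m^*)E(n)\in\tilde{A}_{M}\) and \((u_\lambda)\) is an approximate unit for \(\tilde{A}_{M}\), both right-hand sides converge in norm to \(E(m^*)E(n)\).

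It remains to pass to the limit and to extract the algebraic properties. Kadison--Schwarz for the contractive completely positive map \(E\) gives \(E(m_i)^*E(m_i)\le E(m_i^* m_i)=m_i^* m_i\) (using \(E|_A=\Id\)), so \(0\le\tau_\lambda\le u_\lambda\le 1\) and \(\sup_\lambda\norm{\tau_\lambda}\le 1\). Using this uniform bound together with the norm convergence of \(\tau_\lambda d\) and \(d\tau_\lambda\) on the dense subspace \(\operatorname{span}\{m^* n\}\subseteq\s(M)\), and the density of \(\s(M)\tilde{A}\s(M)\) in \(\tilde{A}_{M}\), I would show that \(\tau_\lambda\) converges strictly to a positive multiplier \(\tau\in\Mult(\tilde{A}_{M})\) with \(\norm{\tau}\le 1\). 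By construction \(\tau\cdot m^* n=E(m^*)E(n)=m^* n\cdot\tau\); this both gives the required formula and shows \(\tau\) commutes with every \(m^* n\), hence with all of \(\s(M)\). Uniqueness is then immediate, since these relations determine \(\tau\) on the dense set \(\operatorname{span}\{m^* n\}\) and \(\s(M)\) contains an approximate unit of \(\tilde{A}_{M}\). Finally, as \(\s(M)\hookrightarrow\tilde{A}_{M}\) is non-degenerate and \(A\subseteq\Mult(\s(M))\) because \(\s(M)\idealin A\), Lemma~\ref{lem:centre_mult} upgrades commutation with \(\s(M)\) to commutation with \(\Mult(\s(M))\supseteq A\), giving \(\tau\in\Mult(\tilde{A}_{M})\cap A'\). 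The main obstacle I anticipate is not the limiting argument but securing an approximate unit of \(\s(M)\) of the form \(\sum_i m_i^* m_i\) in the non-separable setting, since an arbitrary approximate unit of \(\s(M)\) need not be expressible through the module \(M\); this is precisely the point where the Brown--Mingo--Shen approximate units replace frames.
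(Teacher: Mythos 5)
Your proposal is correct and takes essentially the same route as the paper's proof: the same Brown--Mingo--Shen approximate unit \(\mu_\lambda=\sum_i m_{\lambda,i}^* m_{\lambda,i}\), the same candidates \(\tau_\lambda=\sum_i E(m_{\lambda,i})^*E(m_{\lambda,i})\) bounded via the Schwarz inequality \(E(m)^*E(m)\le E(m^*m)\), the same bimodule computation (your identity \(\tau_\lambda\, m^*n = u_\lambda\, E(m^*)E(n)\) is the paper's \(E(\mu_\lambda m^*)E(n)\) rewritten), strict convergence from boundedness plus non-degeneracy of \(M^*M\hookrightarrow \tilde{A}_{M}\), and the same upgrade of commutation with \(M^*M\) to commutation with \(A\) via Lemma~\ref{lem:centre_mult}. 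The only (harmless) differences are that you spell out the heredity of \(\tilde{A}_{M}\) and the membership \(E(m^*)E(n)\in\tilde{A}_{M}\), which the paper leaves implicit.
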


\begin{proof}
  For a genuine conditional expectation \(B\to A\) on a separable
  \(\Cst\)\nb-algebra, this is
  \cite{Exel:noncomm.cartan}*{Lemma~11.5} for the slice~\(M^*\).
  We need a different proof.  The construction in
  \cite{Brown-Mingo-Shen:Quasi_multipliers}*{Remark~1.9} gives an
  approximate unit~\((\mu_\lambda)\) in~\(M^*M\) of the
  form
  \(\mu_\lambda = \sum_{i=1}^{n_\lambda} m_{\lambda,i}^*
  m_{\lambda,i}\) for some \(n_\lambda\in\N\) and
  \(m_{\lambda,1},\dotsc,m_{\lambda,n_\lambda}\in M\).  Let
  \[
    \tau_\lambda\defeq \sum_{i=1}^{n_\lambda} E(m_{\lambda,i})^*
    E(m_{\lambda,i}).
  \]
  The \(2\)\nb-positivity of~\(E\) implies
  \(E(m)^* E(m) \le E(m^* m)\) for all \(m\in B\) (see
  \cite{Choi:Schwarz}*{Corollary~2.8}).  Hence
  \[
    0 \le \tau_\lambda
    = \sum E(m_{\lambda,i}^*) E(m_{\lambda,i})
    \le \sum E(m_{\lambda,i}^* m_{\lambda,i})
    = \mu_\lambda\le 1.
  \]
  Let \(m,n\in M\).  Then
  \(m^* n, m_{\lambda,i} m^*\in A\).  As in the
  proof of \cite{Exel:noncomm.cartan}*{Lemma~11.5}, the
  \(A\)\nb-linearity of~\(E\) implies
  \begin{multline*}
    \tau_\lambda m^* n
    = \sum E(m_{\lambda,i}^*) E(m_{\lambda,i}) m^* n
      = \sum E(m_{\lambda,i}^*) E(m_{\lambda,i} m^* n)
    \\= \sum E(m_{\lambda,i}^* m_{\lambda,i} m^*)  E(n)
        = E(\mu_\lambda m^* ) E(n).
  \end{multline*}
  The adjoint of this relation is
  \(m^* n \tau_\lambda = E(m^*) E(n \mu_\lambda)\).  Thus
  \(\lim_{\lambda} \tau_\lambda \cdot m^* n \cdot x = E(m^*)E(n) \cdot
  x\) and
  \(\lim_{\lambda} x\cdot m^* n \cdot\tau_\lambda = x\cdot
  E(m^*)E(n)\) in the norm topology for all
  \(x\in \tilde{A}_{M}\).  Since the net~\((\tau_\lambda)\) is
  bounded and the embedding
  \(M^* M \hookrightarrow \tilde{A}_{M}\) is
  non-degenerate, it follows that~\((\tau_\lambda)\) converges
  strictly towards some \(\tau\in\Mult(\tilde{A}_{M})\), which
  satisfies \(\tau m^* n = E(m^*) E(n) = m^* n \tau\) for all
  \(m,n\in M\).  This determines the multiplier~\(\tau\) uniquely
  because \(M^* M\cdot \tilde{A}_{M}\cdot M^* M\)
  is dense in~\(\tilde{A}_{M}\).  The formula above shows
  that~\(\tau\) commutes with~\(M^* M\).  Then it commutes
  with all multipliers of~\(M^* M\) by
  Lemma~\ref{lem:centre_mult}.  Since the map
  \(A\to\Mult(\tilde{A}_{M})\) factors through
  \(\Mult(M^* M)\), it follows that~\(\tau\) commutes
  with~\(A\).
\end{proof}

\begin{lemma}
  \label{lem:conditional_expectation_preserving_slice}
  Let \(E\colon B \to A\) be a conditional expectation, and let
  \(M\in \Slice(A,B)\) be such that \(E(M)\subseteq M\).
  Then \(E(M)=M\cap A\), and this is a complemented ideal
  in~\(M^*M\).
\end{lemma}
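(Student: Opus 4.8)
The plan is to prove the two assertions separately, the first being formal and the second resting on an orthogonal decomposition of~\(M\).

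For the identity \(E(M)=M\cap A\): since \(E\) is a conditional expectation onto~\(A\) and \(E(M)\subseteq M\) by hypothesis, we get \(E(M)\subseteq M\cap A\). Conversely, any \(a\in M\cap A\) satisfies \(E(a)=a\) because \(E\) restricts to the identity on~\(A\); as \(a\in M\), this shows \(a\in E(M)\). Writing \(J\defeq M\cap A=E(M)\), I would first record that \(J\) is a closed two-sided ideal in~\(A\) (from \(AM\subseteq M\) and \(MA\subseteq M\)), hence self-adjoint, and that \(J\subseteq \s(M)=M^*M\): approximating \(a\in J\) by \(a u_\lambda\) for an approximate unit \((u_\lambda)\) of~\(\s(M)\) and using \(M=M\cdot\s(M)\) places each \(a u_\lambda\) in \(A\cdot M^*M\subseteq M^*M\). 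Thus \(J\) is an ideal in the \(\Cst\)\nb-algebra \(K\defeq M^*M\).

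The heart of the argument is an orthogonal decomposition. The restriction \(E|_M\colon M\to M\) is a contractive idempotent with image~\(J\) and kernel \(N\defeq\ker(E|_M)\), so \(M=J\oplus N\) as Banach spaces. Viewing \(M\) as a right Hilbert \(K\)\nb-module with \(\braket{m}{n}=m^*n\), I claim this splitting is orthogonal. The key computation is that the cross terms vanish: for \(m,n\in M\) the elements \(E(m)^*(n-E(n))\) and \((m-E(m))^*E(n)\) lie in \(\s(M)\subseteq A\), so applying~\(E\) and using its \(A\)\nb-bimodule linearity together with \(E(n-E(n))=0\) shows both are~\(0\). Hence \(M=J\oplus N\) orthogonally, that is, \(N^*J=J^*N=0\); applying~\(E\) on the left in the same way gives the symmetric relation \(J N^*=0\).

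Finally I would transfer this to~\(K\). Expanding \(m^*n\) via \(m=E(m)+(m-E(m))\) and discarding the cross terms gives \(m^*n\in J+\overline{N^*N}\), so \(K=J+K_N\) with \(K_N\defeq\overline{N^*N}\), a closed ideal of~\(K\). The relation \(J N^*=0\) yields \(J\cdot K_N=0\), whence \(J\cap K_N=0\) and \(K_N\) is precisely the annihilator of~\(J\) in~\(K\). Therefore \(M^*M=(M\cap A)\oplus\overline{N^*N}\) exhibits \(E(M)=M\cap A\) as a complemented ideal in~\(M^*M\). I expect the only real obstacle to be the orthogonality claim --~verifying that \(E|_M\) is an orthogonal, not merely a bounded, projection~-- since everything afterwards is a formal manipulation of closed ideals, using that sums and intersections of closed ideals in a \(\Cst\)\nb-algebra behave well.
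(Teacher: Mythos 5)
Your proof is correct, but it takes a genuinely different route from the paper's. The identification \(E(M)=M\cap A\) is the same in both arguments; the difference is in how complementedness is obtained. The paper derives it from Lemma~\ref{lem:Exels_lemma}: the multiplier \(\tau\in\Mult(M^*M)\) satisfying \(\tau m^*n=E(m^*)E(n)=m^*n\tau\) can, thanks to the hypothesis \(E(M)\subseteq M\), be applied to \(E(m),E(n)\) themselves, giving \(\tau^2 m^*n=E(m^*)E(n)=\tau m^*n\); hence \(\tau\) is a central projection and \(\tau M^*M=E(M)^*E(M)=M\cap A\) is complemented. You avoid Lemma~\ref{lem:Exels_lemma} (and the Brown--Mingo--Shen approximate units behind it) entirely: you split \(M=J\oplus N\) by the bounded idempotent \(E|_M\), verify the splitting is orthogonal for both inner products because the relevant products of slice elements lie in \(A\), where \(E\) acts as the identity and is \(A\)\nb-bilinear, and then read off \(M^*M=J\oplus\overline{N^*N}\). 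The paper's argument buys brevity, since Lemma~\ref{lem:Exels_lemma} is needed elsewhere anyway (Lemmas \ref{lem:expect_to_virtual_commutant} and~\ref{lem:expectation_unique_from_no_commutants}); yours buys a self-contained elementary proof with more explicit output: the complement is exhibited as \(\overline{N^*N}\) for \(N=M\cap\ker E\), and one gets the orthogonal decomposition \(M=(M\cap A)\oplus N\) with \(E|_M\) the orthogonal projection onto the first summand, which is precisely the shape of condition~\ref{enu:exotic_characterisation1.5} in Proposition~\ref{pro:recognise_reduced_crossed_S_using_E}. Two points you leave implicit are fine but worth flagging: the relation \(JN^*=0\) needs \(MM^*\subseteq A\) (not merely \(M^*M\subseteq A\)), which holds for slices in a non-degenerate inclusion by polarisation of the normaliser condition; and the sum \(J+\overline{N^*N}\) is automatically closed because a sum of two closed ideals in a \(\Cst\)\nb-algebra is closed.
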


\begin{proof}
  The inclusion \(E(M)\subseteq M\) implies
  \(E(M)\subseteq M\cap A\).  Since
  \(M\cap A\subseteq E(M)\) always holds, this implies
  \(E(M)=M\cap A\).  Clearly, \(M\cap A\) is an ideal
  in~\(M^*M\).  Let \(\tau\in \Mult(M^*M)\) be as in
  Lemma~\ref{lem:Exels_lemma} and let \(n,m\in M\).  Then
  \[
    \tau^2 m^* n
    = \tau E(m^*)E(n)
    = E^2(m^*)E^2(n)
    = E(m^*)E(n)
    =\tau m^* n.
  \]
  Thus~\(\tau\) is a projection, and
  \(\tau M^*M=E(M)^* E(M)=M\cap A\).
  Hence~\(M\cap A\) is complemented in~\(M^*M\).
\end{proof}

% \cite{Kwasniewski:Exel_crossed}*{Subsection~2.1}.

\begin{proposition}
  \label{pro:recognise_reduced_crossed_S_using_E}
  Let~\(B\) be a \(\Cst\)\nb-algebra with a saturated
  \(S\)\nb-grading \((B_t)_{t\in S}\) with unit fibre
  \(A\defeq B_1\) and a conditional expectation \(E\colon B\to A\).
  Let \(\pi\colon A\rtimes S\to B\) be the canonical epimorphism and
  let \(E_0\colon A\rtimes S \to A''\) be the canonical weak
  conditional expectation.  The following are equivalent:
  \begin{enumerate}
  \item\label{enu:exotic_characterisation1}%
    the \(S\)\nb-grading is wide and~\(E\) preserves it;
  \item \label{enu:exotic_characterisation1.5}%
    the \(S\)\nb-grading is wide,
    \(B_t=(B_t\cap A)\oplus B_t\cdot (B_t\cap A)^\bot\)
    and~\(E|_{B_t}\) is the projection onto the first summand for
    each \(t\in S\);
  \item \label{enu:exotic_characterisation2}%
    \(E\circ \pi = E_0\), and so~\((B_t)_{t\in S}\) is a closed
    action.
  \end{enumerate}
  Therefore, \(\pi\) descends to an isomorphism
  \(A\rtimes_\red S \cong B\) and \((B_t)_{t\in S}\) is a closed
  action if and only if the \(S\)\nb-grading is wide and~\(B\)
  admits an almost faithful conditional expectation~\(E\) that
  preserves the grading (and then \(E\) is in fact faithful).
\end{proposition}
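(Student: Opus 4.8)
The plan is to prove \ref{enu:exotic_characterisation1}\(\Leftrightarrow\)\ref{enu:exotic_characterisation1.5} and \ref{enu:exotic_characterisation1}\(\Leftrightarrow\)\ref{enu:exotic_characterisation2}, and then to read off the final biconditional from the description \(A\rtimes_\red S=(A\rtimes S)/\Null_{E_0}\). The whole argument rests on translating the abstract crossed\nb-product data into honest subspaces of~\(B\) by means of wideness. Since the grading is saturated, each~\(B_t\) is a slice (Proposition~\ref{prop:regular_vs_inverse_semigroups}), so Lemmas \ref{lem:Exels_lemma} and~\ref{lem:conditional_expectation_preserving_slice} apply with \(M=B_t\). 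Put \(J\defeq B_t\cap A\); it is an ideal of~\(A\) contained in \(\s(B_t)=B_t^*B_t\). For an idempotent \(v\le 1,t\) one has \(\s(B_v)=B_v\) and \(B_tB_v\subseteq B_{tv}=B_v\), so \(I_{1,t}=\overline{\sum_{v\le 1,t}\s(B_v)}=\overline{\sum_{v\le 1,t}B_v}\) holds unconditionally, and wideness is precisely the equality \(I_{1,t}=J\). The same inclusion \(B_tB_v\subseteq B_v\) yields \(B_t\cdot J=J\) whenever the grading is wide. Finally, the defining relation \(\vartheta_{1,t}(\xi)\delta_1=\xi\delta_t\) of \(A\rtimes_\alg S\) shows that under~\(\pi\) the isomorphism \(\vartheta_{1,t}\colon B_t\cdot I_{1,t}\congto I_{1,t}\) becomes the identity inclusion inside~\(B\); together with \(I_{1,t}=J=B_t\cdot J\) this matches the decomposition of Proposition~\ref{prop:conditional_expectation} with the one in~\ref{enu:exotic_characterisation1.5}.

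For \ref{enu:exotic_characterisation1}\(\Rightarrow\)\ref{enu:exotic_characterisation1.5}, Lemma~\ref{lem:conditional_expectation_preserving_slice} gives \(E(B_t)=J\), complemented in \(\s(B_t)\); using \(B_t\cdot J=J\) I would write \(B_t=J\oplus B_t\cdot J^\bot\), where \(J^\bot\) is the annihilator of~\(J\) in~\(A\), and since the \(A\)\nb-bimodule map~\(E\) restricts to the identity on~\(J\) and kills \(B_t\cdot J^\bot\) (because \(E(b'a)=E(b')a\in J\cdot J^\bot=0\) for \(a\in J^\bot\)), it is the claimed projection. The converse \ref{enu:exotic_characterisation1.5}\(\Rightarrow\)\ref{enu:exotic_characterisation1} is immediate, as the first summand is \(E(B_t)=B_t\cap A\subseteq B_t\). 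For \ref{enu:exotic_characterisation1}\(\Rightarrow\)\ref{enu:exotic_characterisation2}, the same Lemma makes \(J=I_{1,t}\) complemented in \(\s(B_t)\), so the action is closed by Proposition~\ref{prop:conditional_expectation}; that proposition identifies \(E_0|_{B_t}\), transported to~\(B\) via~\(\pi\), with the projection onto~\(J\) along \(B_t\cdot J^\bot\), which is exactly \(E|_{B_t}\), so \(E\circ\pi=E_0\) on the dense span of the fibres. For \ref{enu:exotic_characterisation2}\(\Rightarrow\)\ref{enu:exotic_characterisation1}, closedness makes \(E_0\) genuinely \(A\)\nb-valued, whence \(E(B_t)=E_0(\Hilm_t)=I_{1,t}\subseteq B_t\cap A\subseteq B_t\); thus~\(E\) preserves the grading, and Lemma~\ref{lem:conditional_expectation_preserving_slice} forces \(E(B_t)=B_t\cap A\), so \(I_{1,t}=B_t\cap A\), i.e.\ the grading is wide.

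For the final equivalence, suppose the grading is wide and~\(E\) is an almost faithful conditional expectation preserving it. By the equivalence just proved, \(E\circ\pi=E_0\) and the action is closed. Then \(\ker\pi\subseteq\Null_{E_0}\), since \(b\in\ker\pi\) gives \(E_0(b^*b)=E(\pi(b)^*\pi(b))=0\); hence~\(\pi\) descends to \(\bar\pi\colon A\rtimes_\red S\to B\), which is surjective and, by the same computation applied to a representative, injective. Thus~\(\bar\pi\) is an isomorphism with \(E\circ\bar\pi=E_\red\), and faithfulness of~\(E_\red\) forces~\(E\) to be faithful. Conversely, if the action is closed and~\(\pi\) descends to an isomorphism \(\bar\pi\colon A\rtimes_\red S\congto B\), then~\(E_0\) is \(A\)\nb-valued and \(E\defeq E_\red\circ\bar\pi^{-1}\) is a faithful, hence almost faithful, conditional expectation with \(E\circ\pi=E_0\); the implication \ref{enu:exotic_characterisation2}\(\Rightarrow\)\ref{enu:exotic_characterisation1} then yields wideness and that~\(E\) preserves the grading.

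I expect the only real work to lie in the two bookkeeping identities of the first paragraph—\(I_{1,t}=B_t\cap A\) under wideness and the collapse of~\(\vartheta_{1,t}\) to an inclusion inside~\(B\)—since once these are in place, everything reduces to matching the projection from Lemma~\ref{lem:conditional_expectation_preserving_slice} with the one from Proposition~\ref{prop:conditional_expectation}.
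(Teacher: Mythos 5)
Your proof of the three-way equivalence
\ref{enu:exotic_characterisation1}\(\iff\)\ref{enu:exotic_characterisation1.5}\(\iff\)\ref{enu:exotic_characterisation2}
is correct, and for
\ref{enu:exotic_characterisation2}\(\Rightarrow\)\ref{enu:exotic_characterisation1}
it takes a genuinely different route from the paper: the paper deduces wideness from the injectivity of
\(A\rtimes_\alg S \to A\rtimes_\red S\) (so that intersections of fibres inside~\(B\) may be computed in the
algebraic crossed product), whereas you compute \(E(B_t) = E_0(\Hilm_t) = I_{1,t}\) from the closedness
decomposition and then invoke Lemma~\ref{lem:conditional_expectation_preserving_slice} to force
\(I_{1,t} = B_t\cap A\).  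Your route avoids the cited injectivity result, and the bookkeeping identities in
your first paragraph (\(\s(B_v) = B_v\), \(B_t B_v \subseteq B_v\), and the collapse of~\(\vartheta_{1,t}\) to
the identity inclusion inside~\(B\)) are all correct and do justify transporting the decomposition of
Proposition~\ref{prop:conditional_expectation} into~\(B\).

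The final biconditional, however, has a genuine gap: your descent argument runs in the wrong direction.  The
inclusion \(\ker\pi \subseteq \Null_{E_0}\), which you verify correctly, yields a surjection
\(B \cong (A\rtimes S)/\ker\pi \onto (A\rtimes S)/\Null_{E_0} = A\rtimes_\red S\); it does \emph{not} produce a
map \(\bar\pi\colon A\rtimes_\red S \to B\).  For~\(\pi\) to descend through the quotient by~\(\Null_{E_0}\) you
need the reverse inclusion \(\Null_{E_0} \subseteq \ker\pi\), and this is exactly where almost faithfulness
of~\(E\) must enter --- your argument never uses that hypothesis, which is a warning sign, because without it the
statement is false: take \(B = A\rtimes S\) and \(E = E_0\) for a closed action whose full and reduced crossed
products differ (for instance the trivial action of a non-amenable discrete group on~\(\C\)); then
\ref{enu:exotic_characterisation1}--\ref{enu:exotic_characterisation2} hold with \(\pi = \Id\), yet \(\pi\) does
not descend to an isomorphism.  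The missing step is: for \(b\in\Null_{E_0}\), the ideal property
of~\(\Null_{E_0}\) gives \(E_0((bc)^* bc) = 0\) for all \(c\in A\rtimes S\), hence
\(E((\pi(b)d)^*\pi(b)d) = 0\) for all \(d\in B\) by surjectivity of~\(\pi\), and almost faithfulness then forces
\(\pi(b) = 0\); this is the content of the lemma the paper cites at this point.  Your computation
\(\ker\pi\subseteq\Null_{E_0}\) is then what gives injectivity of the resulting~\(\bar\pi\), so the two
inclusions play exactly the opposite roles from the ones you assign them.
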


\begin{proof}
  Assume first that \(E\circ \pi = E_0\).  Then
  \(E_0(A\rtimes S)\subseteq A\), that is, the \(S\)\nb-action
  on~\(A\) is closed.  And \(\ker \pi \subseteq \ker E_0\).  Since
  \(\ker (\Lambda\colon A\rtimes S \to A\rtimes_\red S)\) is the
  largest two-sided ideal contained in \(\ker E_0\), it follows that
  \(\ker \pi\subseteq \ker \Lambda\).  Thus there is a surjective
  homomorphism \(B \onto A\rtimes_\red S\) such that the composition
  \[
    A\rtimes_\alg S \hookrightarrow A\rtimes S
    \onto B
    \onto A\rtimes_\red S
  \]
  coincides with~\(\Lambda\), and \(B \onto A\rtimes_\red S\) is
  invertible if and only if~\(E\) is almost faithful (see
  \cite{Kwasniewski-Meyer:Essential}*{Lemma~3.10}).  If~\(E\) is
  almost faithful, it is faithful because the isomorphism
  \(B \cong A\rtimes_\red S\) identifies~\(E\) with~\(E_\red\); the
  latter is faithful by
  Proposition~\ref{prop:fast_intro_for_reduced}.

  The composite map \(A\rtimes_\alg S \to A\rtimes_\red S\) is
  injective by \cite{Buss-Exel-Meyer:Reduced}*{Proposition~4.3}.
  Hence so is the map \(A\rtimes_\alg S \to B\).  If \(t,u\in S\),
  then the intersection of \(B_t\) and~\(B_u\)
  in~\(A\rtimes_\alg S\) is the closed linear span of~\(B_v\) for
  \(v\le t,u\).  Therefore, the \(S\)\nb-grading on~\(B\) is wide.
  So the ideal \(I_{1,t}=\overline{\sum_{v\le 1,t} B_v}\) coincides
  with \(B_t \cap A\).  Since the action~\((B_t)_{t\in S}\) is
  closed, the last part of Proposition
  \ref{prop:conditional_expectation} implies that
  \(B_t=(B_t\cap A)\oplus B_t\cdot (B_t\cap A)^\bot\)
  and~\(E_0|_{B_t}\) is the projection onto the first summand.
  Since \(E\circ \pi = E_0\) and~\(\pi\) is the identity map
  on~\(B_t\) the same holds for~\(E|_{B_t}\).  This shows the last
  part of the assertion and that~\ref{enu:exotic_characterisation2}
  implies~\ref{enu:exotic_characterisation1.5}.

  It is trivial that \ref{enu:exotic_characterisation1.5}
  implies~\ref{enu:exotic_characterisation1}.  We show
  that~\ref{enu:exotic_characterisation1}
  implies~\ref{enu:exotic_characterisation2}.  Thus assume that the
  \(S\)\nb-grading is wide and that the conditional expectation
  \(E\colon B\to A\) preserves it.  Let \(t\in S\).  Then the
  ideal~\(I_{1,t}\) is equal to \(B_t\cap A\).  Since
  \(B_t\in \Slice(A,B)\) is a slice,
  Lemma~\ref{lem:conditional_expectation_preserving_slice} implies
  \(E(B_t)=B_t\cap A\) and that the ideal \(I_{1,t}=E(B_t)\) is
  complemented in \(\s(B_t)=B_t^*B_t\).  Therefore, the
  action~\((B_t)_{t\in S}\) is closed by Proposition
  \ref{prop:conditional_expectation}.  The map~\(E\) is the identity
  on \(B_t\cdot I_{1,t}=B_t\cap A=I_{1,t}=E(B_t)\).  It vanishes on
  \(B_t\cdot I_{1,t}^\bot\) because
  \[
    E(B_tI_{1,t}^\bot)
    = E(B_tI_{1,t}^\bot) (B_t\cap A)
    = E(B_tI_{1,t}^\bot (B_t\cap A))
    = E(B_tI_{1,t}^\bot I_{1,t} )=0.
  \]
  So \(E\circ \pi = E_0\) by the last part of Proposition \ref{prop:conditional_expectation}.
\end{proof}

Proposition~\ref{pro:recognise_reduced_crossed_S_using_E} generalises
\cite{Exel:Amenability}*{Theorem~3.3} in the case of group gradings.

\begin{corollary}
  \label{cor:exotic_characterisation}
  Let \(A\subseteq B\) be a \(\Cst\)\nb-inclusion.  There are an
  inverse semigroup~\(S\), a closed \(S\)\nb-action on~\(A\), and an
  isomorphism \(B \cong A\rtimes_\red S\) if and only if~\(A\) is
  the unit fibre of an inverse semigroup grading on~\(B\) that is
  preserved by an almost faithful conditional expectation
  \(E\colon B\to A\).
\end{corollary}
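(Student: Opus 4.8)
The plan is to deduce Corollary~\ref{cor:exotic_characterisation} from Proposition~\ref{pro:recognise_reduced_crossed_S_using_E} by carefully matching up the two directions of each statement, using the machinery already assembled in the excerpt. The corollary is essentially a repackaging of the final assertion of that proposition together with Proposition~\ref{prop:regular_vs_inverse_semigroups}, which tells us that unit fibres of inverse semigroup gradings are exactly regular subalgebras; the task is to translate between ``\(B\cong A\rtimes_\red S\) for a closed action'' and ``\(A\) is a unit fibre preserved by an almost faithful expectation''.

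First I would prove the ``only if'' direction. Suppose we are given an inverse semigroup~\(S\), a closed \(S\)\nb-action \((\Hilm_t)_{t\in S}\) on~\(A\), and an isomorphism \(\varphi\colon B \congto A\rtimes_\red S\) that restricts to the identity on~\(A\) (the inclusion \(A\subseteq B\) being part of the data). By Remark~\ref{rem:cross_product_graded}, \(A\rtimes_\red S\) carries the saturated \(S\)\nb-grading \((\Hilm_t)_{t\in S}\) with unit fibre~\(A\); transporting this back along~\(\varphi\) makes~\(A\) the unit fibre of a saturated \(S\)\nb-grading on~\(B\). Since the action is closed, the canonical weak expectation \(E_\red\colon A\rtimes_\red S\to A''\) is in fact \(A\)\nb-valued (Definition~\ref{def:Hausdorffness} and Proposition~\ref{prop:conditional_expectation}), and it is faithful by Proposition~\ref{prop:fast_intro_for_reduced}, hence in particular almost faithful. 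Pulling~\(E_\red\) back along~\(\varphi\) gives an almost faithful conditional expectation \(E\colon B\to A\). That this~\(E\) preserves the grading follows because \(E\circ\pi = E_0\) holds tautologically here (\(\pi\) being the quotient map \(A\rtimes S\onto A\rtimes_\red S\) composed with~\(\varphi^{-1}\)), so the last part of Proposition~\ref{pro:recognise_reduced_crossed_S_using_E}, applied via the equivalence \ref{enu:exotic_characterisation2}\(\Rightarrow\)\ref{enu:exotic_characterisation1}, yields both wideness and grading-preservation.

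For the converse, suppose \(A=B_1\) is the unit fibre of an \(S\)\nb-grading \((B_t)_{t\in S}\) on~\(B\) and that an almost faithful conditional expectation \(E\colon B\to A\) preserves it. By Proposition~\ref{prop:regular_vs_inverse_semigroups} we may assume the grading is saturated (or simply invoke the saturated refinement), so that Proposition~\ref{pro:recognise_reduced_crossed_S_using_E} applies. Since preservation of the grading is exactly condition~\ref{enu:exotic_characterisation1} once we know the grading is wide, the cleanest route is to invoke the final displayed equivalence of Proposition~\ref{pro:recognise_reduced_crossed_S_using_E} directly: it states that \(\pi\) descends to an isomorphism \(A\rtimes_\red S\cong B\) and the action is closed precisely when the grading is wide and~\(B\) admits an almost faithful grading-preserving expectation. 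This delivers a closed \(S\)\nb-action on~\(A\) together with \(B\cong A\rtimes_\red S\), as required.

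The one genuine subtlety, and the step I would treat most carefully, is the hypothesis of \emph{wideness} in Proposition~\ref{pro:recognise_reduced_crossed_S_using_E}, which does not appear explicitly in the corollary. In the converse direction I must either assume or arrange that the given grading is wide; the natural fix is to pass to the \emph{tautological} grading over the slice inverse semigroup \(\Slice(A,B)\), which is always wide by the Example following Definition~\ref{def:admissible_grading}, and to check that an almost faithful expectation preserving \emph{some} inverse semigroup grading preserves this tautological one as well. Concretely, regularity of \(A\subseteq B\) (equivalently, \(A\) being a unit fibre, via Proposition~\ref{prop:regular_vs_inverse_semigroups}) lets us replace~\(S\) by \(\Slice(A,B)\) without changing~\(A\) or~\(E\), after which wideness is automatic and Proposition~\ref{pro:recognise_reduced_crossed_S_using_E} applies verbatim. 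Thus the corollary follows once this reduction to a wide grading is made explicit.
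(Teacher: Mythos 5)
Your forward direction is fine and agrees with the paper's (implicit) argument: transport the canonical saturated grading of \(A\rtimes_\red S\) and its canonical \(A\)\nb-valued, faithful conditional expectation along the isomorphism, and quote Proposition~\ref{pro:recognise_reduced_crossed_S_using_E}. The converse direction, however, has a genuine gap, located exactly at the step you flagged as the ``one genuine subtlety''. You propose to replace the given grading by the tautological \(\Slice(A,B)\)-grading and to ``check that an almost faithful expectation preserving \emph{some} inverse semigroup grading preserves this tautological one as well'', asserting that regularity makes this replacement harmless. That claim is false. Preserving the \(\Slice(A,B)\)-grading means \(E(M)\subseteq M\) for \emph{every} slice \(M\); by Lemma~\ref{lem:expectation_unique_from_no_commutants} this holds for noncommutative Cartan inclusions, but it fails for general closed-action crossed products, which are precisely what the corollary characterises. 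Concretely, let \(A=\C\subseteq B=\Cst_\red(\mathbb{F}_2)\) with the canonical trace \(\tau\). The group grading \(B_g=\C u_g\) is wide and saturated and is preserved by~\(\tau\), so the right-hand side of the corollary holds, and indeed \(B\cong\C\rtimes_\red\mathbb{F}_2\) for a closed action. But for a non-scalar unitary \(u\in B\) with \(\tau(u)\neq0\) --~for instance \(u=\exp\bigl(i\varepsilon(u_g+u_g^*)\bigr)\) for small \(\varepsilon>0\)~-- the subspace \(\C u\) is a slice, and \(\tau(\C u)=\C\,\tau(u)\) is not contained in \(\C u\), since \(\tau(u)\in\C u\setminus\{0\}\) would force \(u\in\C\). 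So \(\tau\) does not preserve the \(\Slice(\C,B)\)-grading, and Proposition~\ref{pro:recognise_reduced_crossed_S_using_E} cannot be applied to it. This is no accident: if \(\tau\) did preserve all slices, that proposition would yield \(\Cst_\red(\mathbb{F}_2)\cong\C\rtimes_\red\Slice(\C,B)\), and the latter is not even separable.

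The missing idea is that one should refine the given grading as little as possible, namely only by cutting with ideals. The paper sets \(\tilde{S}\defeq\setgiven{B_t J}{t\in S,\ J\in\Ideals(A)}\subseteq\Slice(A,B)\). This tautological \(\tilde{S}\)\nb-grading is saturated because \(\tilde{S}\subseteq\Slice(A,B)\), and it is wide because \(B_t\cap B_u=B_t\cdot J\) for some ideal \(J\in\Ideals(A)\) (every Hilbert subbimodule of~\(B_t\) has this form). The crucial point, which your route to \(\Slice(A,B)\) loses, is that \(E\)\nb-preservation survives this particular refinement for free, by \(A\)\nb-bilinearity of~\(E\) together with \(E(B_t)\subseteq B_t\): one has \(E(B_t\cdot J)=E(B_t)\cdot J\subseteq B_t\cdot J\). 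With \(\tilde{S}\) in place of \(\Slice(A,B)\), the rest of your argument goes through verbatim via Proposition~\ref{pro:recognise_reduced_crossed_S_using_E}.
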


\begin{proof}
  Compared to
  Proposition~\ref{pro:recognise_reduced_crossed_S_using_E}, only
  the assumption that the grading is wide and saturated is missing.
  We remedy this by refining the grading.  Let
  \(\tilde{S}\defeq \setgiven{B_t J}{t\in S,\ J\in \Ideals(A)}\).
  This is an inverse subsemigroup of \(\Slice(A,B)\).  So~\(B\) is
  \(\tilde{S}\)\nb-graded in a tautological way.  If
  \(t,u\in \tilde{S}\), then \(B_t \cap B_u = B_t \cdot J\) for some
  ideal \(J\in\Ideals(A)\) because any Hilbert subbimodule
  of~\(B_t\) has this form.  So \(B_t \cap B_u \in \tilde{S}\) and
  the \(\tilde{S}\)\nb-grading on~\(B\) is wide.  It is saturated
  because \(\tilde{S}\subseteq \Slice(A,B)\).  It is still preserved
  by~\(E\) because
  \(E(B_t\cdot J)=E(B_t)\cdot J\subseteq B_t\cdot J \subseteq B_t\).
  Now Proposition~\ref{pro:recognise_reduced_crossed_S_using_E}
  finishes the proof.
\end{proof}

\begin{corollary}
  \label{cor:unique_conditional_grading_preserving}
  Let~\(A\) be the unit fibre of an inverse semigroup grading
  \((B_t)_{t\in S}\) on~\(B\).  At most one conditional expectation
  \(E\colon B\to A\) preserves this grading.
\end{corollary}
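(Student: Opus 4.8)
The plan is to reduce everything to a single fibre and then pin down~\(E\) there by an explicit formula that makes no reference to~\(E\). Since \(\sum_{t\in S} B_t\) is dense in~\(B\) and any conditional expectation is contractive, it suffices to show that a grading\nb-preserving conditional expectation \(E\colon B\to A\) is uniquely determined on each~\(B_t\). By Proposition~\ref{prop:regular_vs_inverse_semigroups}, \(M\defeq B_t\) is a slice, so \(\s(M)=M^*M\) is an ideal in~\(A\); let \(J\defeq M\cap A\), which is an ideal in~\(\s(M)\), and fix an approximate unit \((e_\mu)\) of~\(J\) consisting of positive contractions. Because~\(E\) preserves the grading, \(E(n)\in B_t\cap A=J\) for every \(n\in M\).

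The key point will be the identity \(e_\mu n = e_\mu E(n)\) for all~\(\mu\) and all \(n\in M\). To see it, I would put \(x\defeq n-E(n)\in M\); since \(E(n)\in A\) we have \(E(x)=0\). As \(e_\mu\in J\subseteq M\) is self\nb-adjoint, \(e_\mu\in M^*\), so \(e_\mu x\in M^*M\subseteq A\); hence \(E(e_\mu x)=e_\mu x\) because~\(E\) restricts to the identity on~\(A\). On the other hand, \(E\) is a left \(A\)\nb-module map, so \(E(e_\mu x)=e_\mu E(x)=0\). Comparing the two expressions gives \(e_\mu x=0\), that is, \(e_\mu n = e_\mu E(n)\).

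Now I let \(\mu\to\infty\). Since \(E(n)\in J\) and \((e_\mu)\) is an approximate unit for~\(J\), we have \(e_\mu E(n)\to E(n)\), and therefore \(e_\mu n\to E(n)\). The net \((e_\mu n)_\mu\) depends only on the grading (through \(J=B_t\cap A\)) and on~\(n\), not on the choice of~\(E\). Consequently the limit \(E(n)\) is the same for any two grading\nb-preserving conditional expectations, which proves \(E_1|_{B_t}=E_2|_{B_t}\) for all~\(t\) and hence \(E_1=E_2\) by density and continuity.

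The main obstacle is the vanishing step \(e_\mu x=0\). Its proof hinges on the observation that multiplying an element of \(B_t\cap\ker E\) on the left by an element of the ``diagonal'' ideal \(B_t\cap A\) lands inside~\(A\); this is exactly where the slice/normaliser structure enters, via \(M^*M\subseteq A\), and it lets the \(A\)\nb-bilinearity of~\(E\) and the relation \(E|_A=\Id\) be played off against each other. Alternatively, one can reach the same formula through Lemmas~\ref{lem:Exels_lemma} and~\ref{lem:conditional_expectation_preserving_slice}: the positive multiplier~\(\tau\) attached to~\(E\) is forced to be the projection of~\(\s(M)\) onto~\(J\), which is manifestly independent of~\(E\), and the relation \(E(e_\mu)E(n)=\tau e_\mu n=e_\mu n\) recovers \(e_\mu n=e_\mu E(n)\) once more.
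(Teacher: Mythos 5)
Your proof is correct, and it takes a genuinely different route from the paper's. The paper deduces the corollary from Proposition~\ref{pro:recognise_reduced_crossed_S_using_E}: it first refines the given grading to the wide, saturated grading \(\tilde{S}=\setgiven{B_tJ}{t\in S,\ J\in\Ideals(A)}\), notes that \(E\) still preserves the refined grading, and then identifies \(E\circ\pi\) with the canonical conditional expectation \(E_0\) on \(A\rtimes\tilde{S}\), so that surjectivity of \(\pi\) pins down \(E\). You instead determine \(E\) fibrewise by the explicit norm-limit formula \(E(n)=\lim_\mu e_\mu n\) for \(n\in B_t\), where \((e_\mu)\) is an approximate unit of \(J\defeq B_t\cap A\); the heart is the identity \(e_\mu\bigl(n-E(n)\bigr)=0\), obtained by playing \(E|_A=\Id\) (applied to \(e_\mu\bigl(n-E(n)\bigr)\in B_{t^*}B_t\subseteq A\)) off against the left \(A\)\nb-linearity of \(E\). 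All steps check out: \(J\) is a closed two-sided ideal of \(A\) (hence self-adjoint, with an approximate unit), \(E(B_t)\subseteq B_t\cap A\) by grading-preservation, \(n-E(n)\in B_t\) since \(B_t\) is a closed subspace containing \(J\), and the net \((e_\mu n)\) is manifestly independent of \(E\). What your argument buys: it is elementary and self-contained --- no crossed products, no refinement of the grading, no multiplier Lemma~\ref{lem:Exels_lemma} --- it gives an explicit formula for \(E\) on each fibre, and it applies verbatim to arbitrary (non-wide, non-saturated) gradings, whereas the paper must first pass to the refined grading before Proposition~\ref{pro:recognise_reduced_crossed_S_using_E} applies. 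What the paper's route buys: it is a two-line consequence of machinery needed elsewhere anyway, and it simultaneously identifies any grading-preserving expectation with the canonical crossed-product expectation, which is the form used in the later characterisation theorems. Your closing remark, that the same identity can be recovered from Lemmas~\ref{lem:Exels_lemma} and~\ref{lem:conditional_expectation_preserving_slice} because the multiplier \(\tau\) is forced to be the (\(E\)\nb-independent) projection onto \(J\), is also correct and is the closest point of contact with the paper's toolkit.
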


\begin{proof}
  Let \(\tilde{S}\defeq \setgiven{B_t J}{t\in S,\ J\in \Ideals(A)}\)
  be the wide saturated grading of~\(B\) as in the proof of
  Corollary~\ref{cor:exotic_characterisation}.  Let
  \(\pi\colon A\rtimes \tilde{S}\to B\) be the canonical epimorphism
  and let \(E_0\colon A\rtimes \tilde{S} \to A\) be the canonical
  conditional expectation.  Then \(E\circ \pi = E_0\) by
  Proposition~\ref{pro:recognise_reduced_crossed_S_using_E}, and
  this determines~\(E\) because~\(\pi\) is surjective.
\end{proof}

\begin{example}
  An \(S\)\nb-action need not be closed if there is a conditional
  expectation \(P\colon A\rtimes S\to A\) that does not preserve the canonical \(S\)\nb-grading
  of~\(A\rtimes S\).
  By~\cite{BussExel:Fell.Bundle.and.Twisted.Groupoids}*{Proposition~5.3},
  there is a non-Hausdorff étale groupoid~\(H\) with a compact
  Haudorff object space~\(X\) and a faithful conditional expectation
  from \(B\defeq\Cst_\red(H)\) onto \(A\defeq \Cont(X)\).  Let~\(S\) be
  the inverse semigroup of bisections of~\(H\) and let~\(S\) act on
  \(\Cont(X)\) in the canonical way.  Then
  \(\Cst_\red(H) \cong \Cont(X) \rtimes_\red S\).  This \(S\)\nb-action on
  \(\Cont(X)\) is not closed, although there is a faithful conditional
  expectation \(E\colon \Cont(X)\rtimes_\red S\to \Cont(X)\).
\end{example}

\section{Exel's noncommutative Cartan subalgebras}
\label{sec:Exel}

Noncommutative Cartan subalgebras were introduced
in~\cite{Exel:noncomm.cartan}, as a generalisation of the
(commutative) Cartan subalgebras studied by
Renault~\cite{Renault:Cartan.Subalgebras}.  Note that Exel considers
a subalgebra \(B\subseteq A\), so the roles of \(A\) and~\(B\) are
exchanged in~\cite{Exel:noncomm.cartan}.  And Exel assumes
throughout that the larger \(\Cst\)\nb-algebra is separable.

\begin{definition}[\cite{Exel:noncomm.cartan}*{Definition~9.2} and
  \cite{Exel:noncomm.cartan}*{Definition~12.1}]
  \label{def:virtual_commutant}
  A \emph{virtual commutant} of~\(A\) in~\(B\) is an
  \(A\)\nb-bimodule map \(J\to B\) defined on an ideal
  \(J\in\Ideals(A)\).  An inclusion \(A\subseteq B\) is a
  \emph{noncommutative Cartan subalgebra} if it is regular, any
  virtual commutant has range in~\(A\), and there is an almost
  faithful conditional expectation \(E\colon B\to A\).
\end{definition}
%\begin{remark}
%In  \cite{Exel:noncomm.cartan}*{Definition~12.1} Exel requires the conditional expectation to be faithful rather than just almost faithful,
%but in the footnote he states that the two definitions are equivalent.  We prove this below.
%\end{remark}
\begin{definition}[\cite{Kwasniewski-Meyer:Aperiodicity}]
  A Hilbert \(A\)\nb-bimodule~\(\Hilm[H]\) over a
  \(\Cst\)\nb-algebra~\(A\) is \emph{purely outer} if there is no
  non-zero ideal \(J\in\Ideals(A)\) with \(\Hilm[H]\cdot J \cong J\)
  as a Hilbert bimodule.  An action~\(\Hilm\) of an inverse
  semigroup on a \(\Cst\)\nb-algebra~\(A\) is \emph{purely outer} if
  the Hilbert \(A\)\nb-bimodules \(\Hilm_t\cdot I_{1,t}^\bot\) are
  purely outer for all \(t\in S\), where
  \(I_{1,t}^\bot = \setgiven{a\in A}{a\cdot I_{1,t}=0}\).
\end{definition}

\begin{theorem}
  \label{the:nc_Cartan}
  Let \(A\subseteq B\) be a regular \(\Cst\)\nb-subalgebra with an
  almost faithful conditional expectation \(E\colon B\to A\).  The
  following conditions are equivalent:
  \begin{enumerate}%[wide,label=\textup{(\ref*{the:nc_Cartan}.\arabic*)}]
  \item \label{the:nc_Cartan0}%
    \(E\) is faithful and there is exactly one conditional
    expectation \(I B I \to I\) for all \(I\in\Ideals(A)\), namely,
    the restriction of~\(E\);
  \item \label{the:nc_Cartan1}%
    there is exactly one faithful conditional expectation
    \(I B I \to I\) for all \(I\in\Ideals(A)\);
  \item \label{the:nc_Cartan2}%
    any virtual commutant in~\(B\) has range in~\(A\), that is,
    \(A\subseteq B\) is a noncommutative Cartan subalgebra;
  \item \label{the:nc_Cartan3}%
    \(I' \cap \Mult(I B I) =Z\Mult(I)\) for all \(I\in\Ideals(A)\);
  \item \label{the:nc_Cartan4}%
    if a slice \(M\subseteq B\) is isomorphic to an ideal~\(I\)
    in~\(A\) as a Hilbert \(A\)\nb-bimodule, then already \(M=I\) as
    a subset of~\(B\);
  \item \label{the:nc_Cartan5}%
    if a slice \(M\subseteq B\) satisfies \(M\otimes_A M \cong M\)
    as a Hilbert \(A\)\nb-bimodule, then \(M\cdot M = M\);
  \item \label{the:nc_Cartan6}%
    for any unital inverse semigroup~\(S\) and any saturated, wide
    grading \((B_t)_{t\in S}\) on~\(B\) with unit fibre~\(A\), the
    action~\((B_t)_{t\in S}\) of~\(S\) on~\(A\) is closed and purely
    outer, and the canonical \Star{}homomorphism
    \(\pi\colon A\rtimes S\to B\) descends to an isomorphism
    \(A\rtimes_\red S \congto B\);
  \item \label{the:nc_Cartan7}%
    there are a unital inverse semigroup~\(S\), a closed and purely
    outer action of~\(S\) on~\(A\), and an isomorphism
    \(A\rtimes_\red S \congto B\) mapping~\(A\) identically to
    itself.
  \end{enumerate}
  In particular, if the above equivalent conditions hold, then
  \(E\colon B\to A\) faithful and it is the only conditional expectation onto~\(A\).
\end{theorem}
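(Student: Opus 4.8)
The plan is to prove the eight conditions equivalent through a connected web of implications, treating the purely algebraic conditions (3)--(6) as one cluster, the dynamical conditions (7)--(8) as a second, and the expectation-uniqueness conditions (1)--(2) as a third, with explicit bridges between the clusters. Concretely, I will close the cycle (3)$\Leftrightarrow$(4)$\Leftrightarrow$(5)$\Leftrightarrow$(6), link (5)$\Leftrightarrow$(7)$\Leftrightarrow$(8), and link (1)$\Leftrightarrow$(2)$\Leftrightarrow$(3), so that every condition is tied to the central pair (3),(5).

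I would first settle the algebraic cluster by unwinding definitions. A virtual commutant $\phi\colon J\to B$ with $J\in\Ideals(A)$ is an $A$-bimodule map; using $B=A\cdot B\cdot A$ and bilinearity, one checks that the multiplier it defines lies in $J'\cap\Mult(JBJ)$, and that its range lies in $A$ exactly when this multiplier is central, giving (3)$\Leftrightarrow$(4). For (4)$\Leftrightarrow$(5), a slice $M\cong I$ as a Hilbert $A$-bimodule that is not contained in $A$ yields, via $I\congto M\hookrightarrow B$, a non-central element of $\s(M)'\cap\Mult(\s(M)\,B\,\s(M))$, and conversely every such element produces a slice isomorphic to but distinct from an ideal. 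Finally (5)$\Leftrightarrow$(6) rests on the fact that an idempotent in $\Slice(A,B)$ is an ideal in $A$: if $M\otimes_A M\cong M$ then $M$ is isomorphic to the ideal $\s(M)$, so (5) forces $M=\s(M)$ and hence $M\cdot M=M$; conversely a slice isomorphic to an ideal is automatically idempotent, so (6) makes it an ideal, and the isomorphism then forces equality with $I$.

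Next I would bridge to the dynamical cluster, fixing a saturated wide grading $(B_t)_{t\in S}$ with $B_1=A$. Pure outerness is the most direct: if some $\Hilm_t\cdot I_{1,t}^\bot\cdot J\cong J$ with $0\ne J\in\Ideals(A)$, then $M\defeq B_t\cdot I_{1,t}^\bot\cdot J$ is a slice with $M\cong J$, so (5) gives $M=J\subseteq A$; but then $\s(M)=J\subseteq I_{1,t}^\bot$ while $M\subseteq B_t\cap A=I_{1,t}$ forces $J\subseteq I_{1,t}\cap I_{1,t}^\bot=0$, a contradiction. Closedness and the isomorphism come from the given expectation: Lemma~\ref{lem:Exels_lemma} for the slice $B_t$ produces a positive contraction $\tau\in A'\cap\Mult(\s(B_t))$ with $\tau\,m^*n=E(m^*)E(n)$, which (4) forces to be central; centrality together with Lemma~\ref{lem:conditional_expectation_preserving_slice} shows $E(B_t)\subseteq B_t$, so $E$ preserves the grading. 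Proposition~\ref{prop:conditional_expectation} then yields closedness and Proposition~\ref{pro:recognise_reduced_crossed_S_using_E} yields $A\rtimes_\red S\congto B$, establishing (5)$\Rightarrow$(7). The implication (7)$\Rightarrow$(8) is immediate by applying (7) to the tautological grading by $\Slice(A,B)$. For (8)$\Rightarrow$(5) I would use the Fourier coefficients of Proposition~\ref{pro:harmonic_projections}: for a slice $M\cong I$, each $E_t$ is an $A$-bimodule map, and composing $I\congto M$ with the projection of $E_t$ onto the off-diagonal summand $\Hilm_t\cdot I_{1,t}^\bot$ gives a bimodule map from an ideal into a purely outer bimodule, which must vanish by the characterisation of pure outerness in~\cite{Kwasniewski-Meyer:Aperiodicity}; hence $M$ is supported on the diagonal, $M=E(M)\subseteq A$, and $M=I$.

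It remains to incorporate the expectation-uniqueness conditions. The implication (1)$\Rightarrow$(2) is formal: the unique expectation $IBI\to I$ being the faithful map $E|_{IBI}$ is in particular the unique faithful one. For (3)$\Rightarrow$(1) I would follow Exel: given any expectation $E'\colon IBI\to I$, Lemma~\ref{lem:Exels_lemma} attaches to each slice a multiplier that (3), via (4), forces to be central, and comparing the central multipliers for $E$ and $E'$ on every slice shows $E'=E|_{IBI}$; faithfulness of $E$ follows once $B\cong A\rtimes_\red S$ (from the already established equivalence with (7)), since this identifies $E$ with the faithful expectation $E_\red$ of Proposition~\ref{prop:fast_intro_for_reduced}. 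The remaining bridge (2)$\Rightarrow$(3) is the hard converse and the main obstacle: I would argue contrapositively, turning a nontrivial virtual commutant into a second faithful expectation on some $IBI$. Following Zarikian~\cite{Zarikian:Unique_expectations}, a nonzero $A$-bimodule map $J\to B$ with range outside $A$ yields a nonzero self-adjoint element of the relative commutant that perturbs $E|_{IBI}$ to a genuinely different expectation; the delicate point---ensuring the perturbation remains completely positive and faithful rather than merely a bimodule map---is precisely where Zarikian's techniques are needed. Once the web is closed, the final ``in particular'' assertion is immediate from (1) with $I=A$: the expectation $E$ is faithful and is the only conditional expectation $B\to A$.
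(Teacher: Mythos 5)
Your web of implications reproduces the paper's architecture almost exactly (the cluster \ref{the:nc_Cartan2}--\ref{the:nc_Cartan5} via Lemma~\ref{lem:virtual_commutant_multiplier} and Proposition~\ref{pro:virtual_commutant_slices}, the Zarikian-style perturbation for the hard uniqueness bridge, and the Fourier coefficients of Proposition~\ref{pro:harmonic_projections} for the return from the dynamical conditions), but the bridge from the algebraic cluster to closedness and the isomorphism \(A\rtimes_\red S\congto B\) is broken as written. The multiplier~\(\tau\) of Lemma~\ref{lem:Exels_lemma} for a genuine expectation \(E\colon B\to A\) lives in \(\Mult(\s(B_t))\cap A'\), and by Lemma~\ref{lem:centre_mult} it \emph{automatically} commutes with all of \(\Mult(\s(B_t))\); so invoking condition \ref{the:nc_Cartan3} to ``force centrality'' is vacuous, and centrality alone cannot yield \(E(B_t)\subseteq B_t\) -- the example after Corollary~\ref{cor:unique_conditional_grading_preserving} exhibits a faithful expectation on a non-Hausdorff groupoid \(\Cst\)\nb-algebra that does not preserve the grading, although the central~\(\tau\) exists there too. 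Worse, Lemma~\ref{lem:conditional_expectation_preserving_slice} has \(E(M)\subseteq M\) as its \emph{hypothesis}, so citing it to prove preservation is circular. The missing idea is the isometric virtual commutant of Lemma~\ref{lem:expect_to_virtual_commutant}: from~\(\tau\) one builds \(\varphi\colon \overline{E(M)}\to B\), \(E(m)\mapsto m\cdot\tau^{1/2}\), and it is the Cartan hypothesis applied to \emph{this} map -- forcing \(M\cdot\tau^{1/2}\subseteq A\) -- that gives \(\tau M^* M\subseteq M\) and hence \(E(M)\subseteq M\) (Lemma~\ref{lem:expectation_unique_from_no_commutants}). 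The same lemma is what your step ``(3)\(\Rightarrow\)(1): comparing central multipliers shows \(E'=E|_{IBI}\)'' actually needs: the paper shows every expectation \(IBI\to I\) preserves slices and then quotes Corollary~\ref{cor:unique_conditional_grading_preserving}, since two expectations can a priori share the same family of multipliers~\(\tau\).

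A second, smaller soft spot sits in your (8)\(\Rightarrow\)(5). The composite \(I\congto M\xrightarrow{E_t} B_t \to B_t\cdot I_{1,t}^\bot\) is a bounded, generally non-isometric bimodule map, and ``must vanish by pure outerness'' is not immediate: pure outerness forbids Hilbert \emph{sub}bimodules isomorphic to ideals, and the closed range of a merely bounded bimodule map need not be one. The paper closes this gap by hand: the map is \(x\mapsto \tau_t x\) with \(\tau_t^*\tau_t\) central and positive; cutting down to \(K=(\tau_t^*\tau_t-\varepsilon)_+ J\) and rescaling by \((\tau_t^*\tau_t)^{-1/2}\) makes the map isometric, producing a genuine subbimodule of \(B_t I_{1,t}^\bot\cdot K\) isomorphic to~\(K\), which contradicts pure outerness via Lemma~\ref{lem:Hilbert_module_outerness}. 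Unless the characterisation you attribute to \cite{Kwasniewski-Meyer:Aperiodicity} literally covers bounded bimodule maps from ideals, you need this cut-down argument; with it, and with the corrected slice-preservation mechanism above, your proof goes through and is essentially the paper's.
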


%Conditions \ref{the:nc_Cartan3} and~\ref{the:nc_Cartan2} are equivalent by the definition of a noncommutative Cartan subalgebra.
Condition~\ref{the:nc_Cartan7} without ``purely outer'' is the
conclusion of the main theorem in~\cite{Exel:noncomm.cartan}.  So
the equivalence of \ref{the:nc_Cartan2} and~\ref{the:nc_Cartan7}
contains the main result of~\cite{Exel:noncomm.cartan} and, together
with~\ref{the:nc_Cartan6}, characterises precisely to which inverse
semigroup actions Exel's theory applies.  Conditions
\ref{the:nc_Cartan4} and~\ref{the:nc_Cartan5} are slightly different
ways of saying that the canonical action of \(\Slice(A,B)\) on~\(A\)
is purely outer.  This gives a candidate for the inverse semigroup
action in~\ref{the:nc_Cartan7}.  Conditions \ref{the:nc_Cartan0}
and~\ref{the:nc_Cartan1} concern the uniqueness of conditional
expectations studied by Zarikian~\cite{Zarikian:Unique_expectations}
--~but for all inclusions \(I \subseteq I B I\) for
\(I\in\Ideals(A)\) and not just for the inclusion \(A\subseteq B\).
Condition~\ref{the:nc_Cartan3} is equivalent to
\(I' \cap \Mult(I B I) \subseteq \Mult(I)\) by
Lemma~\ref{lem:centre_mult}.  If~\(A\) is simple, this says that any
multiplier of~\(B\) that commutes with~\(A\) is already a scalar
multiple of~\(1\).  If~\(A\) is commutative, then it is equivalent
to~\(A\) being maximal Abelian in~\(B\) (compare
Corollary~\ref{cor:Cartan}).

The proof of the theorem requires some preparation.  Our first goal
is Proposition~\ref{pro:virtual_commutant_slices}, which says that
\ref{the:nc_Cartan2}--\ref{the:nc_Cartan5} in
Theorem~\ref{the:nc_Cartan} are equivalent for any non-degenerate
\(\Cst\)\nb-inclusion, without assuming regularity or a conditional
expectation.

\begin{lemma}
  \label{lem:virtual_commutant_multiplier}
  Let \(A\subseteq B\) be a non-degenerate inclusion and
  \(I\in\Ideals(A)\).  If \(\tau\in \Mult(I B I)\cap I'\), then the
  map \(I\to B\), \(x\mapsto \tau\cdot x\), is a virtual commutant.
  Conversely, any virtual commutant \(\varphi\colon I\to B\) is of
  this form for a unique \(\tau\in \Mult(I B I)\cap I'\).  And
  \(\varphi(I) \subseteq I\) if and only if \(\tau\in Z\Mult(I)\).
\end{lemma}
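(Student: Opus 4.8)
The plan is to establish the bijection $\tau \leftrightarrow \varphi$ by handling the easy direction directly and then constructing the multiplier from a virtual commutant. As a preliminary observation I would note that $I = I^3$ by Cohen--Hewitt factorisation, so $I \subseteq I B I$ and $\tau\cdot x$ makes sense for $\tau\in\Mult(IBI)$ and $x\in I$; moreover $I\cdot(IBI)=IBI=(IBI)\cdot I$, so the embedding $I\hookrightarrow IBI$ is non-degenerate. For the first direction, given $\tau\in\Mult(IBI)\cap I'$, the map $\varphi(x)\defeq\tau\cdot x$ has range in $IBI\subseteq B$ and is right $A$\nb-linear since $\tau(xa)=(\tau x)a$. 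Left $A$\nb-linearity requires $\tau$ to commute with all of~$A$, not just with~$I$: I would apply Lemma~\ref{lem:centre_mult} to the non-degenerate inclusion $I\hookrightarrow IBI$ to upgrade commutation with~$I$ to commutation with $\Mult(I)$, and observe that $A\to\Mult(I)$ because $I\idealin A$. Hence $\varphi$ is an $A$\nb-bimodule map, i.e.\ a virtual commutant.

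The substance lies in the converse. Given a virtual commutant $\varphi\colon I\to B$ (bounded, by automatic continuity of bimodule maps over a \cstar{}algebra), I would realise~$\tau$ as a two-sided multiplier $(L,R)$ of $IBI$ via an approximate unit $(u_\lambda)$ of~$I$ with $\norm{u_\lambda}\le 1$, setting $L(z)\defeq\lim_\lambda \varphi(u_\lambda)z$ and $R(z)\defeq\lim_\lambda z\,\varphi(u_\lambda)$. On the dense set of $z=xby$ with $x,y\in I$, $b\in B$, right- and left-linearity give $L(xby)=\varphi(x)by$ and $R(xby)=xb\,\varphi(y)$; writing $x=x_1x_2$ with $x_1,x_2\in I$ yields $\varphi(x)=x_1\varphi(x_2)$, which keeps these values inside $IBI$. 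The estimate $\norm{\varphi(u_\lambda)z}\le\norm{\varphi}\,\norm{z}$ shows $L,R$ are uniformly bounded, so the limits exist on all of $IBI$. The centraliser identity $z_1 L(z_2)=R(z_1)z_2$ reduces on elementary products to the single equation $y_1\varphi(x_2)=\varphi(y_1)x_2$, both sides being $\varphi(y_1x_2)$ by the two linearities. This defines $\tau\in\Mult(IBI)$ with $\tau x=\varphi(x)$ for $x\in I$ (factor $x\in I=I^3$ and use one-sided linearity).

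It then remains to check $\tau\in I'$, uniqueness, and the last equivalence. Commutation follows by writing $a\in I=I^3$ as a product and applying one-sided linearity to see $\tau a=\varphi(a)=a\tau$. For uniqueness, if $\sigma\defeq\tau-\tau'$ kills~$I$, then for $z=x_1(x_2by)$ with $x_1\in I$ one has $\sigma z=(\sigma x_1)(x_2by)=0$; as such~$z$ are dense in $IBI$, $\sigma=0$ as a multiplier. Finally, since $\tau$ always commutes with~$A$ (again via Lemma~\ref{lem:centre_mult}), we have $\tau\in Z\Mult(I)$ iff $\tau\in\Mult(I)$ iff $\tau I\subseteq I$; and $\tau I=\varphi(I)$, so this is exactly $\varphi(I)\subseteq I$.

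I expect the main obstacle to be the converse construction: verifying that $L$ and $R$ are well defined, bounded, and genuinely take values in $IBI$ (rather than merely in $\overline{IB}$ or $\overline{BI}$), and that they satisfy the centraliser identity. Systematic use of the factorisations $I=I^2=I^3$ is what confines every computation to $IBI$, while the normalisation $\norm{u_\lambda}\le 1$ delivers boundedness of the multiplier directly from boundedness of~$\varphi$; the only external input is that~$\varphi$ is bounded.
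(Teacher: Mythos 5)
Your proof is correct, and for the substantive (converse) direction it takes a genuinely different route from the paper's. The paper represents \(B\) faithfully and non-degenerately on a Hilbert space \(\Hils\) and quotes Exel's results --- \cite{Exel:noncomm.cartan}*{Theorem~9.5} to produce \(\tau\in\Bound(\Hils)\) with \(\varphi(x)=\tau x=x\tau\) for all \(x\in I\), and \cite{Exel:noncomm.cartan}*{Proposition~9.3} for \(\tau\cdot I\subseteq IBI\) --- and then only has to check \(\tau\cdot IBI\subseteq IBI\) and \(IBI\cdot \tau\subseteq IBI\), so that \(\tau\in\Mult(IBI)\); the relation \(\tau\in I'\) is immediate from \(\tau x=x\tau\). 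You instead construct \(\tau\) intrinsically as a double centraliser \((L,R)\) of \(IBI\) built from an approximate unit of~\(I\), with the identity \(y\varphi(x)=\varphi(yx)=\varphi(y)x\) for \(x,y\in I\) doing the work that Exel's Hilbert-space theorem does in the paper. This is representation-free and stays inside the \(\Cst\)\nb-algebra \(IBI\); its only external input is boundedness of~\(\varphi\), which is strictly weaker than what the paper imports. Your uniqueness argument is the same as the paper's (density of \(I\cdot B I\) in \(IBI\)), your treatment of the final equivalence via Lemma~\ref{lem:centre_mult} fills in what the paper dismisses as ``clearly'', and your forward direction is the paper's ``direct computation'' done carefully.

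One caveat: ``automatic continuity of bimodule maps over a \(\Cst\)\nb-algebra'' is not a theorem in that generality (already for \(A=\C\), arbitrary Banach bimodules admit unbounded bimodule maps). What is true, and what you actually need, is that virtual commutants are automatically bounded. This is part of Exel's theory in \cite{Exel:noncomm.cartan}*{Section~9}, and it also follows from a short closed-graph argument using exactly your key identity: if \(x_n\to 0\) in \(I\) and \(\varphi(x_n)\to b\) in \(B\), then \(b y=\lim \varphi(x_n)y=\lim x_n\varphi(y)=0\) and likewise \(y b=0\) for all \(y\in I\); since \(\varphi(I)\subseteq IB\) by Cohen factorisation, we may write \(b=ac\) with \(a\in I\), \(c\in B\), whence \(b^*b=c^*(a^*b)=0\) and \(b=0\). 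Replace the broad slogan by that precise statement (or by this three-line argument), and your proof is complete.
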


\begin{proof}
  If \(\tau\in \Mult(I B I)\cap I'\), then \(I\to B\),
  \(x\mapsto \tau\cdot x = x\cdot \tau\), is a virtual commutant by
  direct computation.  Conversely, let \(\varphi\colon I\to B\) be a
  virtual commutant.  Then \(\varphi(I) \subseteq I B I\)
  because~\(\varphi\) is \(I\)\nb-bilinear.  Represent~\(B\)
  faithfully in \(\Bound(\Hils)\) for a Hilbert space~\(\Hils\).
  There is \(\tau\in \Bound(\Hils)\) with
  \(\varphi(x) = \tau\cdot x = x\cdot \tau\) for all \(x\in I\) by
  \cite{Exel:noncomm.cartan}*{Theorem~9.5}.  And
  \(\tau\cdot I \subseteq I B I\) by
  \cite{Exel:noncomm.cartan}*{Proposition~9.3}.  Hence
  \(\tau\cdot I B I = \varphi(I) B I \subseteq I B I B I \subseteq I
  B I\) and
  \(I B I\cdot \tau = I B \varphi(I) \subseteq I B I B I \subseteq I
  B I\).  So \(\tau\in\Mult(I B I)\), and~\(\varphi\) has the
  asserted form.  Let \(\tau_1,\tau_2\in \Mult(I B I)\) satisfy
  \(\tau_1\cdot x = \tau_2\cdot x\) for all \(x\in I\).  Then
  \(\tau_1\cdot x y = \tau_2\cdot x y\) for all \(x\in I\),
  \(y\in B I\), so that \(\tau_1 =\tau_2\) as multipliers
  of~\(I B I\).  Thus \(\tau\in\Mult(I B I)\) above is unique.
  Clearly, \(\tau\in Z\Mult(I)\) if and only if
  \(\varphi(I) \subseteq I\).  This is equivalent to
  \(\varphi(I) \subseteq A\) because~\(\varphi\) is
  \(A\)\nb-bilinear and \(I^2 = I\).
\end{proof}

\begin{proposition}
  \label{pro:virtual_commutant_slices}
  Let \(A\subseteq B\) be a non-degenerate inclusion and
  \(I\in\Ideals(A)\).  The following are equivalent:
  \begin{enumerate}
  \item \label{pro:virtual_commutant_slices_1}%
    any virtual commutant \(\varphi\colon I\to B\) has range
    in~\(A\);
  \item \label{pro:virtual_commutant_slices_2}%
    \(\Mult(I B I) \cap I' = Z\Mult(I)\);
  \item \label{pro:virtual_commutant_slices_3}%
    if a slice \(M\subseteq B\) is isomorphic to~\(I\) as a Hilbert
    \(A\)\nb-bimodule, then \(M=I\);
  \item \label{pro:virtual_commutant_slices_4}%
    if a slice \(M\subseteq B\) satisfies \(M^*M=I\) and
    \(M\otimes_A M \cong M\) as a Hilbert \(A\)\nb-bimodule, then
    already \(M\cdot M = M\).
  \end{enumerate}
\end{proposition}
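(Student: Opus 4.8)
The plan is to prove a cycle of implications among the four conditions, using Lemma~\ref{lem:virtual_commutant_multiplier} to translate between virtual commutants and multipliers, and to handle the passage between slices and bimodule isomorphisms. I expect the equivalence of \ref{pro:virtual_commutant_slices_1} and~\ref{pro:virtual_commutant_slices_2} to be essentially immediate from Lemma~\ref{lem:virtual_commutant_multiplier}, which already records that virtual commutants $\varphi\colon I\to B$ correspond bijectively to multipliers $\tau\in\Mult(IBI)\cap I'$, with $\varphi(I)\subseteq A$ if and only if $\tau\in Z\Mult(I)$. Thus \ref{pro:virtual_commutant_slices_1} says every such $\tau$ lies in $Z\Mult(I)$, which is exactly \ref{pro:virtual_commutant_slices_2}; the reverse inclusion $Z\Mult(I)\subseteq\Mult(IBI)\cap I'$ is automatic.

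The substantive content is the equivalence with the two slice conditions~\ref{pro:virtual_commutant_slices_3} and~\ref{pro:virtual_commutant_slices_4}. First I would observe that~\ref{pro:virtual_commutant_slices_3} and~\ref{pro:virtual_commutant_slices_4} are two formulations of the same phenomenon: a slice $M$ with $M\cong I$ as a Hilbert $A$\nobreakdash-bimodule automatically satisfies $M^*M=I$ (and $MM^*=I$, since the bimodule isomorphism matches both inner products), and the condition $M\otimes_A M\cong M$ for such an $M$ is equivalent to $I\otimes_A I\cong I$, which holds tautologically because $I$ is an ideal; conversely $M\cdot M=M$ together with $M\cong I$ forces $M=I$. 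So the main work is to connect a slice $M$ with $M\cong I$ to a virtual commutant or a central multiplier. Given such an $M$, the isomorphism $M\cong I$ is implemented by a bimodule map, and pairing it against the inclusion $M\hookrightarrow B$ should produce a multiplier $\tau\in\Mult(IBI)\cap I'$; explicitly, the composite $I\congto M\hookrightarrow B$ is a virtual commutant $\varphi\colon I\to B$, so condition~\ref{pro:virtual_commutant_slices_1} forces $\varphi(I)\subseteq A$, i.e.\ $M=\varphi(I)\subseteq I$, and then the isometry of the bimodule isomorphism upgrades this to $M=I$.

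For the converse direction, namely deriving~\ref{pro:virtual_commutant_slices_1} (or~\ref{pro:virtual_commutant_slices_2}) from the slice condition, I would start from an arbitrary $\tau\in\Mult(IBI)\cap I'$ and manufacture a slice. The natural candidate is $M\defeq\overline{\tau\cdot I}$ or, more symmetrically, the closed $A$\nobreakdash-bimodule generated by $\{\tau x : x\in I\}$; one checks it consists of normalisers and is a slice, and that $x\mapsto\tau x$ gives a bimodule isomorphism $I\congto M$. Applying~\ref{pro:virtual_commutant_slices_3} then yields $M=I$, which says $\tau\cdot I\subseteq I$, and combined with $\tau\in I'$ this places $\tau\in Z\Mult(I)$. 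The delicate point here, and what I expect to be the main obstacle, is verifying that the module $M$ built from $\tau$ is genuinely a slice (a closed subspace of normalisers closed under multiplication by $A$ on both sides) and that the map $x\mapsto\tau x$ is an \emph{isometric} bimodule isomorphism rather than merely a bimodule map; this requires care because $\tau$ need not be positive and one must control $\|\tau x\|$ against the inner products, presumably by exploiting the $I'$\nobreakdash-centrality to write $\langle\tau x,\tau y\rangle$ in terms of $\langle x,y\rangle$ twisted by $\tau^*\tau\in Z\Mult(I)$. Once the isometry is secured, the remaining implications are formal.
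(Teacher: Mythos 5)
Your handling of \ref{pro:virtual_commutant_slices_1}\(\iff\)\ref{pro:virtual_commutant_slices_2} and of \ref{pro:virtual_commutant_slices_1}\(\Rightarrow\)\ref{pro:virtual_commutant_slices_3} agrees with the paper.  The genuine gap is in your converse step, where you start from an \emph{arbitrary} \(\tau\in\Mult(IBI)\cap I'\) and set \(M\defeq\overline{\tau\cdot I}\).  For a general such \(\tau\) this is not a slice and \(x\mapsto\tau x\) is not isometric: using centrality one only gets \((\tau x)^*(\tau y)=\tau^*\tau\, x^* y\), which lies in \(I B I\) but need not lie in \(A\), let alone equal \(x^* y\).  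Your proposed repair invokes ``\(\tau^*\tau\in Z\Mult(I)\)'', but that is an instance of the very condition \ref{pro:virtual_commutant_slices_2} you are trying to establish, and it fails in general.  Concretely, take \(A=I=\C\subseteq B=\Cst_\red(\mathbb{F}_2)\) and \(\tau=u+v\) for the two generating unitaries: then \(\tau\in\Mult(IBI)\cap I'=B\), while \(\tau^*\tau=2+u^* v+v^* u\notin\C=Z\Mult(I)\), and \(\C\tau\) is not a slice because \(\tau\tau^*,\tau^*\tau\notin A\).

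The missing idea, which is the crux of the paper's proof of ``not \ref{pro:virtual_commutant_slices_2} implies not \ref{pro:virtual_commutant_slices_3}'', is to work only with unitaries: \(\Mult(IBI)\cap I'\) is a unital \(\Cst\)\nb-algebra, hence the closed linear span of its unitaries, so if \(Z\Mult(I)\subsetneq\Mult(IBI)\cap I'\) there must be a \emph{unitary} \(\tau\in\Mult(IBI)\cap I'\) with \(\tau\notin Z\Mult(I)\).  For a unitary all of your desired identities do hold: \((\tau x)^*(\tau y)=x^* y\), \((\tau x)(\tau y)^*=(x\tau)(y\tau)^*=x y^*\), and \(a\cdot(\tau x)=\tau(ax)\), \((\tau x)\cdot a=\tau(xa)\); hence \(\tau\cdot I\) is a slice isometrically isomorphic to~\(I\), and \(\tau I\neq I\) because \(\tau I\subseteq A\) would force \(\tau\in Z\Mult(I)\) by Lemma~\ref{lem:virtual_commutant_multiplier}.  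With this substitution your cycle closes.  A second, smaller gap: your identification of \ref{pro:virtual_commutant_slices_3} with \ref{pro:virtual_commutant_slices_4} only yields \ref{pro:virtual_commutant_slices_4}\(\Rightarrow\)\ref{pro:virtual_commutant_slices_3}.  For the other direction you must know that a Hilbert \(A\)\nb-bimodule \(M\) with \(M\otimes_A M\cong M\) is isomorphic to an ideal (the paper cites \cite{Buss-Meyer:Actions_groupoids}*{Proposition~4.6}); combined with \(M^* M=I\) this gives \(M\cong I\), and only then does \ref{pro:virtual_commutant_slices_3} apply to yield \(M=I\) and hence \(M\cdot M=M\).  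Your tautological observation that \(I\otimes_A I\cong I\) does not supply this converse.
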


\begin{proof}
  Lemma~\ref{lem:virtual_commutant_multiplier} shows that
  \ref{pro:virtual_commutant_slices_1}
  and~\ref{pro:virtual_commutant_slices_2} are equivalent.  If
  \(M\subseteq B\) is a slice as
  in~\ref{pro:virtual_commutant_slices_3}, then the isomorphism
  \(I\cong M\) is a virtual commutant.
  So~\ref{pro:virtual_commutant_slices_1}
  implies~\ref{pro:virtual_commutant_slices_3}.  We are going to
  show that not~\ref{pro:virtual_commutant_slices_2} implies
  not~\ref{pro:virtual_commutant_slices_3}.  This will complete the
  proof that
  \ref{pro:virtual_commutant_slices_1}--\ref{pro:virtual_commutant_slices_3}
  are equivalent.  A unital \(\Cst\)\nb-algebra is spanned by its
  unitary elements.  Therefore,
  \(\Mult(I B I)\cap I' \neq Z\Mult(I)\) if and only if there is a
  unitary \(\tau\in \Mult(I B I)\cap I'\) with
  \(\tau \notin Z\Mult(I)\).  Then \((\tau x)^* (\tau y) = x^* y\)
  and \((\tau x) (\tau y)^* = (x \tau) (y \tau)^* = x y^*\) for all
  \(x,y \in I\), and \(a\cdot (\tau x) = a x \tau = \tau (a x)\) and
  \((\tau x)\cdot a = \tau (x a)\) for all \(x\in I\), \(a\in A\).
  So \(\tau\cdot I \subseteq B\) is a slice that is isomorphic
  to~\(I\) as a Hilbert \(A\)\nb-bimodule, but not contained
  in~\(A\).

  We prove \ref{pro:virtual_commutant_slices_3}\(\iff\)%
  \ref{pro:virtual_commutant_slices_4}.  A slice~\(M\) contained
  in~\(A\) is a closed \(A\)\nb-bimodule, that is, a closed
  two-sided ideal in~\(A\).  The slices form an inverse semigroup
  (compare Proposition~\ref{prop:regular_vs_inverse_semigroups}).
  If a slice~\(M\) satisfies \(M\cdot M = M\), then
  \(M\cdot M\cdot M = M\) implies \(M = M^*\).  Hence
  \(M = M\cdot M = M\cdot M^* \subseteq A\).  So a slice~\(M\) is
  contained in~\(A\) if and only if \(M\cdot M = M\).  A Hilbert
  \(A\)\nb-bimodule~\(M\) satisfies \(M\otimes_A M \cong M\) if and
  only if \(M\cong I\) for some ideal \(I\in\Ideals(A)\) (see
  \cite{Buss-Meyer:Actions_groupoids}*{Proposition~4.6}).
  Therefore, the two conditions whose equivalence is required
  in~\ref{pro:virtual_commutant_slices_3} are equivalent to the two
  conditions whose equivalence is required
  in~\ref{pro:virtual_commutant_slices_4}.
\end{proof}

Now we turn to preparatory results about conditional expectations.

\begin{lemma}
  \label{lem:multipliers_conditions_expectations}
  Let \(\tilde{A}\supseteq A\subseteq B\) be non-degenerate
  \(\Cst\)\nb-inclusions and let \(E\colon B \to \tilde{A}\) be a
  generalised expectation.  Then~\(E\) is non-degenerate, and it
  extends uniquely to a strictly continuous generalised expectation
  \(\bar{E} \colon \Mult(B) \to \Mult(\tilde{A})\) for the
  induced inclusion \(\Mult(A) \subseteq \Mult(B)\).  If one of the
  maps \(E\) and~\(\bar{E}\) is faithful, symmetric, or almost
  faithful, then so is the other.
\end{lemma}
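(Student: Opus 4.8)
The plan is to construct \(\bar E\) by the standard strict‑extension machinery and then to transfer each of the three properties separately, the easy direction by restriction to \(B\subseteq\Mult(B)\) and the hard direction by reducing multiplier‑level statements to statements inside~\(B\). First I would settle non‑degeneracy and the construction. Since \(A\subseteq B\) and \(A\subseteq\tilde A\) are non‑degenerate, an approximate unit \((u_i)\) of~\(A\) is an approximate unit of both \(B\) and~\(\tilde A\); as \(E|_A=\Id\) we get \(E(u_i)=u_i\to 1\) strictly in \(\Mult(\tilde A)\), so \(E\) is non‑degenerate. The general extension theorem for non‑degenerate completely positive maps then yields a unique completely positive, strictly continuous map \(\bar E\colon\Mult(B)\to\Mult(\tilde A)\) extending~\(E\); it is unital, hence contractive. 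Approximating \(m\in\Mult(A)\) strictly by \(mu_i\in A\) gives \(\bar E(m)=\stlim_i E(mu_i)=\stlim_i mu_i=m\), so \(\bar E\) restricts to the identity on~\(\Mult(A)\) and is a generalised expectation for \(\Mult(A)\subseteq\Mult(B)\).

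The engine for the hard directions is the observation that \(N\defeq\setgiven{x\in\Mult(B)}{\bar E(x^*x)=0}\) is a closed left ideal. Indeed, the completely positive Cauchy--Schwarz inequality \(\bar E(a^*b)^*\bar E(a^*b)\le\norm{\bar E(a^*a)}\,\bar E(b^*b)\) (from \(2\)\nb-positivity of~\(\bar E\)) shows that \(\bar E(m^*m)=0\) forces \(\bar E(m^*b)=0\) for all~\(b\); taking \(b=a^*am\) gives \(\bar E((am)^*(am))=0\), that is, \(am\in N\). I will also use repeatedly that \(B\) is an essential ideal in~\(\Mult(B)\), so that \(Bm=0\) or \(mB=0\) forces \(m=0\).

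The three easy directions (\(\bar E\) has the property \(\Rightarrow\) \(E\) has it) follow by restricting to \(b\in B\), where \(\bar E(b^*\,\blank)=E(b^*\,\blank)\); for almost faithfulness one first upgrades the hypothesis \(E((bc)^*bc)=0\) from \(c\in B\) to all \(c\in\Mult(B)\) by approximating \(c\) strictly by \(cu_i\in B\) and using norm‑continuity of~\(E\) on the norm‑convergent net \(b\,cu_i\to bc\) in~\(B\). For the hard directions: if \(E\) is almost faithful and \(\bar E((mc)^*(mc))=0\) for all \(c\in\Mult(B)\), then fixing \(c_0\in B\) and putting \(b=mc_0\in B\) gives \(E((bc)^*bc)=\bar E((m(c_0c))^*m(c_0c))=0\) for all \(c\in B\), whence \(mc_0=0\); letting \(c_0\) vary over~\(B\) yields \(mB=0\) and \(m=0\). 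If \(E\) is symmetric and \(\bar E(m^*m)=0\), then \(am\in N\cap B\) for every \(a\in B\), so \(E((am)^*(am))=0\); symmetry of~\(E\) then gives \(\bar E(a\,mm^*\,a^*)=E((am)(am)^*)=0\) for all \(a\in B\), and taking \(a=u_i\) and passing to the strict limit (using strict continuity of~\(\bar E\) on bounded sets) yields \(\bar E(mm^*)=0\). Finally, faithfulness transfers either directly --~from \(\bar E(m^*m)=0\) the left‑ideal property gives \(am\in N\cap B\), hence \(am=0\) for all \(a\in B\) by faithfulness of~\(E\), so \(m=0\) by essentiality~-- or, more economically, from the already‑transferred properties via the equivalence \emph{faithful \(\iff\) almost faithful and symmetric}, which holds for~\(E\) and for~\(\bar E\) by the result recalled before the lemma.

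I expect the main obstacle to be exactly these hard directions: because \(\bar E\) is only a \(\Mult(A)\)\nb-bimodule map and not a \(B\)\nb-bimodule map, one cannot slide elements of~\(B\) across~\(\bar E\), so the naive algebraic manipulations fail. The left‑ideal structure of~\(N\), together with the essentiality of \(B\) in \(\Mult(B)\) and strict continuity against an approximate unit, is precisely what bridges the gap between multipliers and elements of~\(B\) and makes every transfer go through.
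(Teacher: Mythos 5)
Your proposal is correct and follows essentially the same route as the paper: non-degeneracy via an approximate unit of \(A\), Lance's strict extension theorem to construct \(\bar{E}\), transfer of almost faithfulness by multiplying multipliers into \(B\), transfer of symmetry via \(E(u_i m m^* u_i)=0\) and strict continuity, and faithfulness via the equivalence \emph{faithful \(\iff\) almost faithful and symmetric}. Your only packaging difference is deriving the key vanishing statements from the Cauchy--Schwarz left-ideal property of \(N=\setgiven{x}{\bar{E}(x^*x)=0}\), where the paper uses the direct positivity estimate \(E(m^*u_i^2 m)\le \bar{E}(m^*m)\); both are valid and interchangeable.
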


\begin{proof}
  The non-degeneracy of the inclusions of~\(A\) means that an
  approximate unit for~\(A\) is also one for \(\tilde{A}\)
  and~\(B\).  Since~\(E|_A\) is the identity, it maps an approximate
  unit for~\(B\) to one for~\(\tilde{A}\).  This is the notion of
  non-degeneracy used in
  \cite{Lance:Hilbert_modules}*{Corollary~5.7} to extend~\(E\) to
  \(\Mult(B)\).  More precisely, it is shown
  in~\cite{Lance:Hilbert_modules} that the extension~\(\bar{E}\) is
  strictly continuous on the unit ball.  Using the Cohen--Hewitt
  Factorisation Theorem in this argument allows to prove strict
  continuity everywhere.

  Suppose that~\(\bar{E}\) is almost faithful.  Let \(a\in B\) be
  such that \(E((ab)^*ab)=0\) for all \(b\in B\) and let
  \(m\in \Mult(B)\).  Let~\((\mu_\lambda)\) be an approximate unit
  for~\(A\).  We compute
  \[
    E((am)^*am)
    = \lim \mu_\lambda E((am)^*am)\mu_\lambda
    = \lim  E((a(m\mu_\lambda))^*a(m\mu_\lambda)) = 0.
  \]
  This implies \(a=0\).  Therefore, \(E\) is almost faithful.
  Conversely, assume~\(E\) to be almost faithful.  Let
  \(m\in \Mult(B)\) satisfy \(\bar{E}((mn)^*mn)=0\) for all
  \(n\in \Mult(B)\).  If \(a,b\in A\), then
  \(E\bigl( ((ma)b)^*((ma)b)\bigr) = \bar{E}\bigl((m
  (ab))^*(m(ab))\bigr)=0\).  Thus \(m\cdot a=0\) because~\(E\) is
  almost faithful.  This implies \(m=0\).  Hence~\(E\) is almost
  faithful.

  Clearly, \(E\) is symmetric if~\(\bar{E}\) is.  Conversely,
  assume~\(E\) to be symmetric.  Let~\((\mu_\lambda)\) be an
  approximate unit for~\(B\).  Then~\((\mu_\lambda m)\) for
  \(m\in \Mult(B)\) converges strictly to~\(m\).  Assume
  \(\bar{E}(m^*m)=0\).  Then
  \(E((\mu_\lambda m)^* \mu_\lambda m)=0\) because
  \(E(m^*\mu_\lambda^2 m)\le \bar{E}(m^*m)=0\).  Hence
  \(E(\mu_\lambda m m^*\mu_\lambda)=0\) because~\(E\) is symmetric.
  Since~\(\bar{E}\) is strictly continuous, this implies
  \(\bar{E}(m m^*) = \stlim E( \mu_\lambda (mm^*) \mu_\lambda)=0\).
  Thus~\(\bar{E}\) is symmetric.  A conditional expectation is
  faithful if and only if it is almost faithful and symmetric (see
  \cite{Kwasniewski-Meyer:Essential}*{Corollary~3.7}).  Hence~\(E\)
  is faithful if and only if~\(\bar{E}\) is.
\end{proof}

\begin{lemma}[compare \cite{Zarikian:Unique_expectations}*{Proposition~3.1}]
  \label{lem:multiplicative_domain}
  Let \(\tilde{A}\supseteq A\subseteq B\) be non-degenerate
  \(\Cst\)\nb-inclusions and let \(E\colon B \to \tilde{A}\) be a
  generalised expectation.  Assume that~\(E\) is a unique
  generalised  expectation \(B\to\tilde{A}\), or
  that~\(E\) is a unique faithful or a unique almost faithful generalised
  expectation \(B\to\tilde{A}\).  Let
  \(\bar{E}\colon \Mult(B)\to \Mult(\tilde{A})\) be the unique
  extension of~\(E\).  Then \(A'\cap \Mult(B)\) is contained in the
  multiplicative domain of~\(\bar{E}\) and~\(\bar{E}\) restricts to
  a \Star{}homomorphism
  \(A'\cap \Mult(B) \to A' \cap \Mult(\tilde{A})\).
\end{lemma}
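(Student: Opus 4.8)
The plan is to reduce the whole statement to a single assertion about unitaries, and then, whenever that assertion fails, to manufacture a \emph{second} generalised expectation and contradict the uniqueness hypothesis.

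First I would record two reductions. The multiplicative domain of~\(\bar{E}\) is a \(\Cst\)\nb-subalgebra of~\(\Mult(B)\) on which \(\bar{E}\) is multiplicative, and \(A'\cap\Mult(B)\) is a unital \(\Cst\)\nb-algebra, hence the closed linear span of its unitaries; so it suffices to show that every unitary \(u\in A'\cap\Mult(B)\) lies in the multiplicative domain. By Choi's characterisation of the multiplicative domain~\cite{Choi:Schwarz} (equality in the Schwarz inequalities \(\bar{E}(u^*u)\ge \bar{E}(u)^*\bar{E}(u)\) and its adjoint), this happens exactly when \(w\defeq \bar{E}(u)\) is a unitary of~\(\Mult(\tilde{A})\). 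Since \(\bar{E}\) is a \(\Mult(A)\)\nb-bimodule map (Lemma~\ref{lem:multipliers_conditions_expectations}) and \(u\in A'\), we get \(a w = \bar{E}(au)=\bar{E}(ua)=w a\), so \(w\in A'\cap\Mult(\tilde{A})\); this already yields the asserted range once multiplicativity is known. Thus the lemma comes down to proving that \(w=\bar{E}(u)\) is a unitary.

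Next I would pass to the KSGNS dilation of~\(\bar{E}\): let~\(\mathcal{F}\) be the right Hilbert \(\Mult(\tilde{A})\)\nb-module completion of \(\Mult(B)\odot\Mult(\tilde{A})\) for the inner product \(\braket{b_1\otimes a_1}{b_2\otimes a_2}=a_1^*\bar{E}(b_1^*b_2)a_2\), with left action~\(\lambda\) of~\(\Mult(B)\) and cyclic vector \(\xi_0\defeq 1\otimes 1\), so that \(\bar{E}(m)=\braket{\xi_0}{\lambda(m)\xi_0}\) and \(\lambda(u)\) is unitary. Put \(\eta\defeq \lambda(u)\xi_0-\xi_0 w\). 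Then \(\braket{\xi_0}{\eta}=0\) and \(\braket{\eta}{\eta}=1-w^*w\), and, by cyclicity of~\(\xi_0\), we have \(\eta=0\) if and only if the defect \(D(b)\defeq \braket{\xi_0}{\lambda(b)\eta}=\bar{E}(bu)-\bar{E}(b)w\) vanishes for all \(b\in B\). Because \(w\in A'\), also \(1-w^*w\in A'\cap\Mult(\tilde{A})\), so for each \(\epsilon>0\) functional calculus produces commuting positive multipliers \(\alpha_\epsilon\defeq \sqrt{\epsilon}\,(1-w^*w+\epsilon)^{-1/2}\) and \(\beta_\epsilon\defeq (1-w^*w+\epsilon)^{-1/2}\) in \(A'\cap\Mult(\tilde{A})\), both invertible, with \(\alpha_\epsilon^2+(1-w^*w)\beta_\epsilon^2=1\). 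Using \(u^*au=a\) and \(\bar{E}(au)=aw\) for \(a\in A\), one checks that \(\xi^{\pm}\defeq \xi_0\alpha_\epsilon\pm\eta\beta_\epsilon\) and \(\zeta^{\pm}\defeq \xi_0\alpha_\epsilon\pm i\,\eta\beta_\epsilon\) are \(A\)\nb-normalised unit vectors, so the associated vector functionals \(F^{\pm},G^{\pm}\colon B\to\tilde{A}\), \(b\mapsto \braket{\xi^{\pm}}{\lambda(b)\xi^{\pm}}\) and \(b\mapsto \braket{\zeta^{\pm}}{\lambda(b)\zeta^{\pm}}\), are generalised expectations (the values lie in~\(\tilde{A}\), not just \(\Mult(\tilde{A})\), because \(b,\,bu,\,u^*b,\,u^*bu\in B\)).

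The uniqueness hypothesis now collapses these competitors onto~\(E\). If~\(E\) is the unique generalised expectation, this is immediate. In the faithful or almost faithful cases I would instead feed in the convex combinations \(\tfrac12(E+F^{\pm})\) and \(\tfrac12(E+G^{\pm})\): these inherit faithfulness, respectively almost faithfulness, from~\(E\) (if a sum of positive maps annihilates \(b^*b\), then so does~\(E\)), hence equal~\(E\), whence again \(F^{\pm}=G^{\pm}=E\). Subtracting, all diagonal terms cancel and only the cross terms survive, with \(D^{\dagger}(b)\defeq \braket{\eta}{\lambda(b)\xi_0}\):
\[
0=F^{+}-F^{-}=2\bigl(\alpha_\epsilon D(b)\beta_\epsilon+\beta_\epsilon D^{\dagger}(b)\alpha_\epsilon\bigr),\qquad
0=G^{+}-G^{-}=2i\bigl(\alpha_\epsilon D(b)\beta_\epsilon-\beta_\epsilon D^{\dagger}(b)\alpha_\epsilon\bigr).
\]
The two relations give \(\alpha_\epsilon D(b)\beta_\epsilon=\pm\beta_\epsilon D^{\dagger}(b)\alpha_\epsilon\) with both signs, so \(\alpha_\epsilon D(b)\beta_\epsilon=0\) for all~\(b\); since \(\alpha_\epsilon,\beta_\epsilon\) are invertible this forces \(D\equiv 0\), hence \(\eta=0\) and \(w^*w=1\). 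Applying the same argument to the unitary~\(u^*\) gives \(ww^*=1\), so~\(w\) is unitary and~\(u\) lies in the multiplicative domain; the range statement then follows from \(w\in A'\cap\Mult(\tilde{A})\).

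I expect the main obstacle to be the legitimacy of the competing expectations in the middle two paragraphs: one must set up the KSGNS module over the possibly non-unital, non-faithful target~\(\tilde{A}\) so that the relevant vector functionals genuinely take values in~\(\tilde{A}\) and restrict to the identity on~\(A\), and one must cope with a non-invertible defect \(1-w^*w\) — which the regularisation by \(\epsilon>0\) is precisely designed to circumvent, since \(\alpha_\epsilon\) and~\(\beta_\epsilon\) are invertible for every \(\epsilon>0\). The verifications that the four maps are completely positive, contractive and \(A\)\nb-bilinear, and that convex combinations with~\(E\) preserve (almost) faithfulness, are routine, but this is exactly where the three forms of the uniqueness hypothesis enter decisively.
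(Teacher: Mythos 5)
Your proposal is correct in substance, but it takes a genuinely different route from the paper. The paper's proof is a short, dilation-free perturbation argument in the spirit of Zarikian: for a self-adjoint contraction \(x\in A'\cap\Mult(B)\) with \(\norm{x}<1\) it defines a competitor
\(E_x(t)\defeq (1-\bar{E}(x))^{-1/2}\,\bar{E}\bigl((1-x)^{1/2}\,t\,(1-x)^{1/2}\bigr)\,(1-\bar{E}(x))^{-1/2}\),
checks that \(E_x\) is again a strictly continuous generalised expectation restricting to the identity on \(A\) --- and is faithful, respectively almost faithful, whenever \(\bar{E}\) is, because the conjugator \((1-x)^{1/2}\) is invertible --- so that uniqueness forces \(E_x=\bar{E}\); unwinding this identity at \(t=x\) gives \(\bar{E}(x-x^2)=\bar{E}(x)-\bar{E}(x)^2\), i.e.\ \(x\) lies in the multiplicative domain by Choi's theorem. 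You instead reduce to unitaries, pass to the KSGNS dilation of \(\bar{E}\), and manufacture competitors as vector functionals at perturbed cyclic vectors \(\xi_0\alpha_\varepsilon\pm\eta\beta_\varepsilon\), \(\xi_0\alpha_\varepsilon\pm i\eta\beta_\varepsilon\), handling the faithful and almost faithful hypotheses by convex combination with \(E\) rather than by invertibility of a conjugator. Both proofs hinge on exactly the same mechanism --- a competing expectation agreeing with \(E\) on \(A\), collapsed onto \(E\) by the uniqueness hypothesis --- but the paper's single formula does the job for all three hypotheses at once with no dilation machinery and no \(\varepsilon\)-regularisation, while your version stays closer to the standard ``unique state extension'' template and isolates the geometric meaning of the defect \(1-w^*w=\braket{\eta}{\eta}\).

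One step of yours needs repair: the claim that, ``by cyclicity of \(\xi_0\)'', \(\eta=0\) already follows once \(D(b)=\bar{E}(bu)-\bar{E}(b)w\) vanishes for all \(b\in B\). The total set in your module is \(\lambda(\Mult(B))\xi_0\Mult(\tilde{A})\), not \(\lambda(B)\xi_0\Mult(\tilde{A})\); the latter is in general not dense (already for \(B=\tilde{A}=A\) and \(E=\Id_A\) its closure is \(A\subsetneq\Mult(A)\)), so vanishing of \(D\) on \(B\) alone does not give \(\eta=0\) by a density argument. The fix is immediate with a tool you already invoke: \(\bar{E}\) is strictly continuous (Lemma~\ref{lem:multipliers_conditions_expectations}), and \(e_i m\to m\) strictly for an approximate unit \((e_i)\) of \(B\), so \(D|_B\equiv 0\) forces \(D|_{\Mult(B)}\equiv 0\); evaluating at \(u^*\) then yields \(1-w^*w=\braket{\eta}{\eta}=D(u^*)=0\), which is all you need (and the same argument applied to \(u^*\) gives \(ww^*=1\)). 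With that patch, your argument is complete.
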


\begin{proof}
  Let \(x\in A'\cap \Mult(B)\) satisfy \(x^*=x\) and \(\norm{x}<1\).
  Then \(\norm*{\bar{E}(x)}<1\).  So we may define a completely
  positive map \(E_x\colon \Mult(B)\to \Mult(\tilde{A})\) by
  \[
    E_x(t)\defeq (1-\bar{E}(x))^{-1/2} \cdot
    \bar{E}((1-x)^{1/2} t (1-x)^{1/2})\cdot (1-\bar{E}(x))^{-1/2}.
  \]
  Let \(a\in A\).  Then \(a x = x a\) and hence
  \(a \bar{E}(x) = \bar{E}(a x) = \bar{E}(x a) = \bar{E}(x) a\), and
  the same holds for \((1-x)^{\pm1/2}\) instead of~\(x\).  This
  implies \(E_x(a) = a\).  Then it follows that~\(E_x\) is
  contractive and a generalised expectation.  It maps~\(B\)
  to~\(\tilde{A}\) and is strictly continuous as well.  So~\(E_x\)
  is the unique strictly continuous extension of its restriction
  \(E_x|_B\colon B\to \tilde{A}\) (see
  Lemma~\ref{lem:multipliers_conditions_expectations}).  Since
  \((1-x)^{1/2}\) is invertible, \(E_x\) is faithful if~\(\bar{E}\)
  is, and~\(E_x\) is almost faithful if~\(\bar{E}\) is.  Thus
  \(E=E_x|_B\) by the uniqueness assumption in the lemma.  This
  implies \(\bar{E} = E_x\) and then \(\bar{E}(x) = E_x(x)\).  So
  \begin{multline*}
    \bar{E}(x) - \bar{E}(x)^2
    = (1-\bar{E}(x))^{1/2} \bar{E}(x)(1-\bar{E}(x))^{1/2}
    \\= (1-\bar{E}(x))^{1/2} E_x(x)(1-\bar{E}(x))^{1/2}
    = \bar{E}((1-x)^{1/2} x (1-x)^{1/2})
    = \bar{E}(x-x^2).
  \end{multline*}
  This is equivalent to \(\bar{E}(x^*)\bar{E}(x)=\bar{E}(x^* x)\).
  Hence~\(x\) is in the multiplicative domain of~\(\bar{E}\) (see
  \cite{Choi:Schwarz}*{Theorem~3.1}).  It follows that the
  multiplicative domain contains all self-adjoint elements of
  \(A'\cap \Mult(B)\).  Thus~\(\bar{E}\) is multiplicative on
  \(A'\cap \Mult(B)\).  Being \(A\)\nb-bilinear, it must map
  \(A'\cap \Mult(B)\) into \(A' \cap \Mult(\tilde{A})\).
\end{proof}

\begin{proposition}
  \label{prop:uniqueness_implies_masa}
  Let \(A\subseteq B\) be non-degenerate.  Assume that there is
  exactly one faithful conditional expectation
  \(E\colon B\to A\subseteq B\).  Then
  \(A'\cap \Mult(B)=Z(\Mult(A))\).
\end{proposition}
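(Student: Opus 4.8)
The plan is to leverage Lemma~\ref{lem:multiplicative_domain} to force every element of $A'\cap\Mult(B)$ into the multiplicative domain of $\bar{E}$, and then use faithfulness of $E$ to pin down such elements as central multipliers of~$A$. Since we assume a unique faithful conditional expectation $E\colon B\to A$, Lemma~\ref{lem:multiplicative_domain} (applied with $\tilde{A}=A$) immediately gives that $A'\cap\Mult(B)$ sits inside the multiplicative domain of $\bar{E}\colon\Mult(B)\to\Mult(A)$, and that $\bar{E}$ restricts to a \Star{}homomorphism $A'\cap\Mult(B)\to A'\cap\Mult(A)=Z\Mult(A)$, the last equality being the definition of the centre together with Lemma~\ref{lem:centre_mult}. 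So the non-trivial content is the reverse: showing that $\bar{E}$ is the \emph{identity} on $A'\cap\Mult(B)$, whence $A'\cap\Mult(B)=\bar{E}(A'\cap\Mult(B))\subseteq Z\Mult(A)$. The opposite inclusion $Z\Mult(A)\subseteq A'\cap\Mult(B)$ is clear, since central multipliers of~$A$ commute with~$A$ and extend to multipliers of~$B$ by non-degeneracy.

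First I would fix a self-adjoint $x\in A'\cap\Mult(B)$ and aim to prove $\bar{E}(x)=x$. The idea is to run a perturbation argument in the spirit of the proof of Lemma~\ref{lem:multiplicative_domain}, but now comparing~$x$ itself against its image. Since $x$ lies in the multiplicative domain, $\bar{E}$ is multiplicative and $A$\nb-bilinear on the commutative \Cst-subalgebra it generates; because $x$ commutes with~$A$, so does $\bar{E}(x)$, placing $\bar{E}(x)\in Z\Mult(A)$. The plan is then to consider the difference $d\defeq x-\bar{E}(x)\in A'\cap\Mult(B)$, which satisfies $\bar{E}(d)=0$, and to show $d=0$ using faithfulness. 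Here faithfulness enters crucially: an element of the multiplicative domain with $\bar{E}(d)=0$ and $\bar{E}(d^*d)=\bar{E}(d)^*\bar{E}(d)=0$ forces $\bar{E}(d^*d)=0$, and since $E$ (hence $\bar{E}$, by Lemma~\ref{lem:multipliers_conditions_expectations}) is faithful, this yields $d=0$.

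\textbf{The main obstacle} I anticipate is justifying that $\bar{E}(d^*d)=0$ genuinely follows — one must verify that $d=x-\bar{E}(x)$ really lands in the multiplicative domain (it does, being a difference of an element of the domain and a central multiplier fixed by $\bar{E}$) and that the multiplicative identity $\bar{E}(d^*d)=\bar{E}(d^*)\bar{E}(d)$ is available for this particular $d$. Since $\bar{E}(d)=\bar{E}(x)-\bar{E}(x)=0$, the product $\bar{E}(d^*)\bar{E}(d)$ vanishes, and multiplicativity on the domain closes the argument. Decomposing a general element of $A'\cap\Mult(B)$ into self-adjoint and anti-self-adjoint parts (both of which lie in $A'\cap\Mult(B)$) extends the conclusion from self-adjoint~$x$ to arbitrary elements. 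Combining the two inclusions gives $A'\cap\Mult(B)=Z\Mult(A)$ as claimed.
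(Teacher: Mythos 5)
Your proof is correct and takes essentially the same route as the paper: both arguments extend \(E\) to \(\bar{E}\colon \Mult(B)\to\Mult(A)\) via Lemma~\ref{lem:multipliers_conditions_expectations}, invoke Lemma~\ref{lem:multiplicative_domain} to place \(A'\cap\Mult(B)\) in the multiplicative domain so that \(\bar{E}\) restricts to a \Star{}homomorphism into \(Z\Mult(A)\), and then combine multiplicativity with faithfulness to show this restriction has trivial kernel. The paper phrases the last step as ``an injective idempotent map is invertible,'' which is precisely your computation \(d = x-\bar{E}(x)\), \(\bar{E}(d^* d)=\bar{E}(d)^*\bar{E}(d)=0\), hence \(d=0\); the two write-ups differ only in wording.
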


\begin{proof}
  Extend~\(E\) to multipliers as in
  Lemma~\ref{lem:multipliers_conditions_expectations}.
  Lemma~\ref{lem:centre_mult} implies
  \(Z(\Mult(A)) = A' \cap \Mult(A) \subseteq A'\cap \Mult(B)\).
  And~\(\bar{E}\) restricts to a \Star{}homomorphism
  \(e\colon A'\cap \Mult(B) \to Z(\Mult(A))\) by
  Lemma~\ref{lem:multiplicative_domain}.  We claim that~\(e\) is
  injective when~\(\bar{E}\) is faithful.  Otherwise, there
  is \(a\in A'\cap \Mult(B)\) with \(a\ge 0\), \(a \neq 0\) and
  \(e(a) = \bar{E}(a)=0\).  Then \(\bar{E}(a^* a)=0\) for all
  because~\(a\) is in the multiplicative domain
  of~\(\bar{E}\).  Since~\(\bar{E}\) is faithful, this
  implies \(a=0\).  So~\(e\) is injective.
  Since \(e^2 = e\), it follows that~\(e\) is invertible.  Hence
  \(Z(\Mult(A))=A'\cap \Mult(B)\).
\end{proof}

\begin{lemma}
  \label{lem:expect_to_virtual_commutant}
  Let \(P\colon B\to A\) be a conditional expectation and let
  \(M\subseteq B\) be a slice.  Let \(\tau\in\Mult(M^* M)\) be as in
  Lemma~\textup{\ref{lem:Exels_lemma}}.  Then there is a unique
  isometric virtual commutant~\(\varphi\) that maps the closure
  of~\(P(M)\) into~\(M\) and that satisfies
  \(\varphi(P(m)) = m\cdot \tau^{1/2}\) for all \(m\in M\).
\end{lemma}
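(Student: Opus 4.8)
The plan is to read everything off the multiplier~\(\tau\) produced by Lemma~\ref{lem:Exels_lemma}. Here \(\tilde A = A\), so the hereditary subalgebra \(\tilde A_M = \s(M)\cdot A\cdot \s(M)\) collapses to \(\s(M)=M^* M\), because \(M^*M\) is an ideal and hence \(M^* M\cdot A\cdot M^* M = M^* M\). Thus \(\tau\) is a positive contraction in \(\Mult(M^* M)\) that commutes with~\(A\) and satisfies \(\tau\, m^* n = P(m)^* P(n) = m^* n\,\tau\) for all \(m,n\in M\). Since \(A'\cap \Mult(M^* M)\) is a \(\Cst\)\nobreakdash-subalgebra containing~\(\tau\), the square root~\(\tau^{1/2}\) lies in it as well; in particular \(\tau^{1/2}\) commutes with~\(A\), and \(M\) is invariant under right multiplication by~\(\tau^{1/2}\) (as \(M\) is full over \(\s(M)=M^* M\), so that multipliers of \(M^* M\) act on~\(M\) by adjointable operators). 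Hence \(m\tau^{1/2}\in M\) for all \(m\in M\).

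I would first fix the domain. As \(P\) is \(A\)\nobreakdash-bilinear and \(M\) is a slice, \(P(M)\) is an \(A\)\nobreakdash-subbimodule of~\(A\), so its closure \(J\defeq \overline{P(M)}\) is an ideal in~\(A\), the natural domain of a virtual commutant. The crucial computation is the inner-product identity
\[
  (m\tau^{1/2})^*(n\tau^{1/2}) = \tau^{1/2}\, m^* n\, \tau^{1/2} = \tau\, m^* n = P(m)^* P(n)
\]
for \(m,n\in M\), where the middle equality uses that \(\tau^{1/2}\) commutes with \(m^* n\in A\). Viewing \(J\subseteq A\) and \(M\subseteq B\) as right Hilbert \(A\)\nobreakdash-modules (inner products \(a^* b\) and \(x^* y\)), this says precisely that the assignment \(P(m)\mapsto m\tau^{1/2}\) preserves inner products. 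Taking \(n=m\) gives \(\norm{m\tau^{1/2}} = \norm{P(m)}\); so the assignment is well defined (if \(P(m)=P(m')\) then \((m-m')\tau^{1/2}=0\)) and isometric, and therefore extends uniquely to an isometry \(\varphi\colon J\to M\subseteq B\).

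It remains to check that \(\varphi\) is a virtual commutant, that is, \(A\)\nobreakdash-bilinear. Left linearity is immediate, since \(a\cdot P(m)=P(am)\) and \(\varphi(P(am))=(am)\tau^{1/2}=a\cdot(m\tau^{1/2})\). For right linearity I would use that the right \(A\)\nobreakdash-action on~\(M\) factors through the canonical map \(A\to\Mult(M^* M)\) (well defined because \(M^* M\idealin A\)) and that \(\tau^{1/2}\) commutes with the image of~\(A\), whence
\[
  (m\tau^{1/2})\cdot a = m\cdot(\tau^{1/2}a) = m\cdot(a\tau^{1/2}) = (ma)\cdot\tau^{1/2}.
\]
Since \(P(m)\cdot a = P(ma)\), this gives \(\varphi(P(m)\cdot a)=\varphi(P(ma))=(ma)\tau^{1/2}=\varphi(P(m))\cdot a\), and right \(A\)\nobreakdash-linearity on all of~\(J\) follows by continuity. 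Finally, uniqueness is automatic: the formula \(\varphi(P(m))=m\tau^{1/2}\) determines~\(\varphi\) on the dense subspace \(P(M)\subseteq J\), so any isometric virtual commutant mapping \(\overline{P(M)}\) into~\(M\) and satisfying this formula coincides with the one constructed.

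The only genuinely delicate point is the commutation of~\(\tau^{1/2}\) with~\(A\), which simultaneously drives the inner-product identity and right \(A\)\nobreakdash-linearity; once Lemma~\ref{lem:Exels_lemma} is available this is just functional calculus inside the relative commutant \(A'\cap \Mult(M^* M)\), and everything else is bookkeeping with the slice and module actions.
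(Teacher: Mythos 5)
Your proof is correct and follows essentially the same route as the paper's (which in turn is Exel's Proposition~11.14): the inner-product identity \((m\tau^{1/2})^*(n\tau^{1/2}) = \tau^{1/2} m^* n \tau^{1/2} = P(m)^*P(n)\) gives well-definedness and isometry, extension to the closed ideal \(\overline{P(M)}\), values in \(M\) via \(M\cdot\Mult(M^*M)\subseteq M\), and \(A\)\nobreakdash-bilinearity from the fact that \(\tau^{1/2}\) commutes with the image of \(A\) in \(\Mult(M^*M)\). Your only cosmetic difference is deriving the commutation of \(\tau^{1/2}\) with \(A\) by functional calculus in \(A'\cap\Mult(M^*M)\), where the paper invokes Lemma~\ref{lem:centre_mult}; both are fine.
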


\begin{proof}
  Up to a change in conventions, this is
  \cite{Exel:noncomm.cartan}*{Proposition~11.14}.  The proof
  in~\cite{Exel:noncomm.cartan} is literally the same.  The closure
  of~\(P(M)\) is a closed ideal.  Lemma~\ref{lem:Exels_lemma} gives
  \(\tau \in \Mult(M^* M)\) with
  \(P(m)^* P(n) = \tau^{1/2} m^* n \tau^{1/2}\) for all
  \(m,n\in M\).  So the map \(P(m) \to m\cdot \tau^{1/2}\) is
  isometric.  Then it extends to the closure.  The extension is
  \(A\)\nb-bilinear because~\(\tau\) commutes with~\(M^* M\) and
  hence with all multipliers of~\(M^* M\) by
  Lemma~\ref{lem:centre_mult}.  The values of~\(\varphi\) belong
  to~\(M\) because \(M\cdot \Mult(M^* M) \subseteq M\).
\end{proof}

\begin{lemma}
  \label{lem:expectation_unique_from_no_commutants}
  Let \(P\colon B\to A\) be a conditional expectation.  If any
  virtual commutant in~\(B\) has range in~\(A\), then~\(P\)
  preserves slices, that is, \(P(M) \subseteq M\) for any slice
  \(M\subseteq B\).
\end{lemma}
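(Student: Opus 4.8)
The plan is to probe the hypothesis with the particular virtual commutant attached to~$P$ and a slice, and then to invert the resulting identity. Fix a slice $M\subseteq B$ and apply Lemma~\ref{lem:Exels_lemma} with $\tilde{A}=A$ and $E=P$ to obtain the positive multiplier $\tau\in\Mult(M^*M)\cap A'$ with $\tau\, m^* n=P(m)^*P(n)=m^* n\,\tau$ for all $m,n\in M$. Feeding this~$\tau$ into Lemma~\ref{lem:expect_to_virtual_commutant} produces an \emph{isometric} virtual commutant~$\varphi$ defined on the ideal $J\defeq\overline{P(M)}$, with $\varphi(P(m))=m\cdot\tau^{1/2}\in M$ for all $m\in M$. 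This is the object I would test against the hypothesis.

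Since every virtual commutant has range in~$A$, applying this to~$\varphi$ shows that its range $K\defeq\overline{M\cdot\tau^{1/2}}$ is contained in~$A$; together with Lemma~\ref{lem:expect_to_virtual_commutant} this gives $M\cdot\tau^{1/2}\subseteq M\cap A$. Thus $\varphi\colon J\congto K$ is an isometric isomorphism of ideals in~$A$, and its inverse $\psi\colon K\to A$, $m\cdot\tau^{1/2}\mapsto P(m)$, is again an $A$\nb-bimodule map on an ideal, hence a virtual commutant. Because~$\psi$ is $A$\nb-bilinear and $\psi(K)\subseteq A$, one gets $\psi(K)=\psi(K\cdot K)\subseteq K\cdot\psi(K)\subseteq K$, so Lemma~\ref{lem:virtual_commutant_multiplier} realises~$\psi$ as left multiplication by a unique central multiplier $\rho\in Z\Mult(K)$. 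In particular $P(m)=\rho\cdot(m\cdot\tau^{1/2})$ for every $m\in M$.

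It remains to see that this element lies in~$M$, which is where I expect the only real work. Here $j\defeq m\cdot\tau^{1/2}$ lies in $M\cap K$ and~$\rho$ is a multiplier of the ideal $K\subseteq A$. Choosing an approximate unit $(u_\lambda)$ for~$K$ and keeping the fixed vector $j\in M$ on the right, $\rho\,j=\lim_\lambda \rho(u_\lambda j)=\lim_\lambda(\rho u_\lambda)\,j$ with $\rho u_\lambda\in K\subseteq A$, so each $(\rho u_\lambda)\,j\in A\cdot M\subseteq M$ and hence $\rho\,j\in M$ by closedness of~$M$. Therefore $P(m)=\rho\cdot(m\cdot\tau^{1/2})\in M$, i.e.\ $P(M)\subseteq M$. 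The main obstacle is precisely this last transport step: the hypothesis only yields $M\cdot\tau^{1/2}\subseteq A$, and one must recognise the inverse correspondence as a virtual commutant, represent it by a multiplier of~$K$, and then push that multiplier back across an element that already lies in~$M$, using that~$K$ is an ideal in~$A$ and~$M$ a left $A$\nb-module.
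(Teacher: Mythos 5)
Your proof is correct, and its first half is exactly the paper's: both construct \(\tau\) via Lemma~\ref{lem:Exels_lemma}, form the isometric virtual commutant \(\varphi\) with \(\varphi(P(m))=m\cdot\tau^{1/2}\) via Lemma~\ref{lem:expect_to_virtual_commutant}, and test the hypothesis against this~\(\varphi\). The endgames differ. The paper never leaves~\(\tau\): from \(M\cdot\tau^{1/2}\subseteq I\defeq\overline{P(M)}\) it deduces \(M\cdot\tau\subseteq I\subseteq A\), hence \(\tau M^*M=(M\tau)^*M\subseteq AM= M\), hence \(P(m)^*P(n)=\tau m^*n\in M\), and finally \(I=I^*I\subseteq M\). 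You instead invert \(\varphi\colon I\congto K\) and apply Lemma~\ref{lem:virtual_commutant_multiplier} a second time, to \(\psi=\varphi^{-1}\); this is a valid and somewhat more structural alternative, trading the paper's bare-hands computation with~\(\tau\) for a second use of the correspondence between virtual commutants and central multipliers. One comment: your final paragraph, which you flag as ``the only real work'', is actually redundant. Once you have \(K=\overline{M\cdot\tau^{1/2}}\subseteq M\cap A\) (your second paragraph) and \(\psi(K)\subseteq K\) (bilinearity plus \(K\cdot K=K\)), you are finished, because \(P(M)\subseteq\overline{P(M)}=\psi(K)\subseteq K\subseteq M\). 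Neither the multiplier~\(\rho\) nor the approximate-unit argument is needed: \(\rho\cdot j\) lies in~\(K\) automatically, since \(\rho\in Z\Mult(K)\) and \(j\in K\), and \(K\subseteq M\) was already established.
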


\begin{proof}
  The proof follows~\cite{Exel:noncomm.cartan}.
  Lemma~\ref{lem:Exels_lemma} provides a central, positive
  multiplier \(\tau\in\Mult(M^* M)\) with \(\norm{\tau}\le 1\) and
  \(\tau m^* n = P(m^*)P(n) = m^* n \tau\) for all \(n,m\in M\).
  Let~\(I\) be the closure of~\(P(M)\).
  Lemma~\ref{lem:expect_to_virtual_commutant} gives an isometric
  virtual commutant \(\varphi\colon I \to B\) with
  \(\varphi(P(m)) \defeq m\cdot \tau^{1/2}\) for all \(m\in M\).  By
  assumption, \(M\cdot \tau^{1/2} \subseteq I\).  The ideal~\(I\) is
  contained in \(M^* M = \s(M)\) because an approximate unit
  for~\(\s(M)\) is also one for~\(I\).  Hence~\(\tau\) restricts to
  a multiplier of~\(I\).  So \(I\cdot \tau^{1/2} \subseteq I\).
  Then
  \(M\cdot \tau \subseteq I\cdot \tau^{1/2} \subseteq I \subseteq
  A\).  So \(\tau M^* M = (M \tau)^* M \subseteq A M = M\).  Then
  \(P(m)^* P(n) = P(m^*) P(n) = \tau m^* n\in M\) for all
  \(m,n\in M\).  Since~\(I\) is the closure of \(P(M)\) and a closed
  two-sided ideal, this implies \(I = I^* I \subseteq M\).
  That is, \(P(M) \subseteq M\).
\end{proof}

\begin{lemma}
  \label{lem:Hilbert_module_outerness}
  Let~\(\Hilm[H]\) be a Hilbert \(A\)\nb-bimodule over a
  \(\Cst\)\nb-algebra~\(A\) and let \(J\in\Ideals(A)\) be an ideal.
  The following conditions are equivalent and imply that~\(J\) is
  \(\Hilm[H]\)\nb-invariant:
  \begin{enumerate}
  \item \label{lem:Hilbert_module_outerness_1}%
    some subbimodule of~\(J\Hilm[H]J\) is isomorphic to the trivial
    Hilbert \(A\)\nb-bimodule~\(J\);
  \item \label{lem:Hilbert_module_outerness_2}%
    \(J\Hilm[H]J\cong J\) as Hilbert \(A\)-bimodules;
  \item \label{lem:Hilbert_module_outerness_3}%
    \(\Hilm[H]J\cong J\) as Hilbert \(A\)-bimodules.
  \end{enumerate}
\end{lemma}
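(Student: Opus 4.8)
The plan is to isolate one elementary fact about imprimitivity bimodules and to apply it repeatedly. Recall that $\Hilm[H]$ is an $\rg(\Hilm[H])$\nobreakdash-$\s(\Hilm[H])$\nobreakdash-imprimitivity bimodule, so it carries a left $A$-valued inner product ${}_A\langle\cdot,\cdot\rangle$ linked to the right one by ${}_A\langle x,y\rangle z = x\braket{y}{z}$, and the trivial bimodule~$J$ has $\s(J)=\rg(J)=J$. I would first prove the following sublemma: \emph{if $\mathcal K\subseteq\mathcal L$ are closed Hilbert sub-bimodules of~$\Hilm[H]$ (hence themselves imprimitivity bimodules) with $\s(\mathcal K)=\s(\mathcal L)$, then $\mathcal K=\mathcal L$.} Indeed, for $\eta\in\mathcal L$ pick an approximate unit $(e_i)$ of $\s(\mathcal L)=\s(\mathcal K)$; then $\eta e_i\to\eta$ by right non-degeneracy, while each $e_i\in\overline{\braket{\mathcal K}{\mathcal K}}$, so $\eta e_i$ is a norm-limit of sums $\eta\braket{\kappa}{\kappa'}={}_A\langle\eta,\kappa\rangle\kappa'\in A\cdot\mathcal K\subseteq\mathcal K$, whence $\eta\in\mathcal K$. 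The mirror statement, with $\rg$ in place of $\s$ and the two inner products exchanged, follows by the symmetric argument using $f_i\eta\to\eta$ and ${}_A\langle\kappa,\kappa'\rangle\eta=\kappa\braket{\kappa'}{\eta}$.

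Before applying this I record the routine source/range identities: since~$J$ is an ideal, $\s(\Hilm[H]J)=J\cap\s(\Hilm[H])\subseteq J$ and $\s(J\Hilm[H]J)\subseteq J$, and likewise for $\rg$; moreover any bimodule isomorphic to~$J$ has both coefficient ideals equal to~$J$. Now \ref{lem:Hilbert_module_outerness_2}$\Rightarrow$\ref{lem:Hilbert_module_outerness_1} is trivial (take the whole module). For \ref{lem:Hilbert_module_outerness_1}$\Rightarrow$\ref{lem:Hilbert_module_outerness_2}, let $\mathcal K\subseteq J\Hilm[H]J$ be a subbimodule with $\mathcal K\cong J$; then $\s(\mathcal K)=J$, and the sandwich $J=\s(\mathcal K)\subseteq\s(J\Hilm[H]J)\subseteq J$ gives $\s(\mathcal K)=\s(J\Hilm[H]J)$, so the sublemma yields $\mathcal K=J\Hilm[H]J$ and hence $J\Hilm[H]J\cong J$. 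For \ref{lem:Hilbert_module_outerness_3}$\Rightarrow$\ref{lem:Hilbert_module_outerness_2}, restrict a bimodule isomorphism $\Phi\colon\Hilm[H]J\congto J$: since $\Phi$ is left $A$\nobreakdash-linear, $\Phi(J\Hilm[H]J)=J\cdot J=J$, so $\Phi$ restricts to an isomorphism $J\Hilm[H]J\congto J$. For \ref{lem:Hilbert_module_outerness_2}$\Rightarrow$\ref{lem:Hilbert_module_outerness_3}, from $J\Hilm[H]J\cong J$ we get $\s(J\Hilm[H]J)=J$, and the same sandwich gives $\s(J\Hilm[H]J)=\s(\Hilm[H]J)=J$; the sublemma applied to $J\Hilm[H]J\subseteq\Hilm[H]J$ then forces $\Hilm[H]J=J\Hilm[H]J\cong J$.

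Finally, for the invariance I combine the equality $\Hilm[H]J=J\Hilm[H]J$ just obtained with its mirror. From $J\Hilm[H]J\cong J$ we also have $\rg(J\Hilm[H]J)=J$, and sandwiching as before yields $\rg(J\Hilm[H]J)=\rg(J\Hilm[H])=J$, so the mirror sublemma applied to $J\Hilm[H]J\subseteq J\Hilm[H]$ gives $J\Hilm[H]=J\Hilm[H]J$. Hence $\Hilm[H]J=J\Hilm[H]J=J\Hilm[H]$, which is exactly $\Hilm[H]$\nobreakdash-invariance of~$J$. The one substantive point is the sublemma; the hardest part is simply keeping the bookkeeping of source and range ideals straight, since once the sublemma and these identities are in place all four implications and the invariance statement follow formally.
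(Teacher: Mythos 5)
Your proof is correct and follows essentially the same route as the paper's: the paper likewise deduces all implications by sandwiching source/range ideals and using the fact that nested closed subbimodules with the same coefficient ideal must coincide, and it obtains the invariance of~\(J\) from the same mirror argument with ranges in place of sources. The only difference is that the paper cites this uniqueness fact as an instance of the Rieffel correspondence, whereas you prove exactly the fragment needed (your sublemma) by hand via approximate units and the linking identity \({}_A\langle x,y\rangle z = x\braket{y}{z}\), which makes the argument self-contained but does not change its structure.
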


\begin{proof}
  By the Rieffel correspondence, closed subbimodules
  in~\(J\Hilm[H]J\) are naturally in bijection with ideals in
  \(\rg(\Hilm[H]) \cap J\) and with ideals in
  \(\s(\Hilm[H]) \cap J\).  So the only subbimodule that can be
  isomorphic to~\(J\) is~\(J\Hilm[H]J\) itself, and this requires
  \(J\subseteq \s(\Hilm[H]) \cap \rg(\Hilm[H])\).  Thus
  \ref{lem:Hilbert_module_outerness_1}
  and~\ref{lem:Hilbert_module_outerness_2} are equivalent.  If
  \(\Hilm[H]J\cong J\), then \(J\Hilm[H]J = J^2 = J\) as well.
  Hence \ref{lem:Hilbert_module_outerness_3} implies
  \ref{lem:Hilbert_module_outerness_2}.  Conversely, if
  \(J\Hilm[H]J\cong J\), then~\(J\) is a closed subbimodule of the
  Hilbert bimodule~\(\Hilm[H]J\).  There is no room for it to be a
  proper subbimodule because \(\s(\Hilm[H]J) \subseteq J\).  So
  \(\Hilm[H]J= J\Hilm[H]J\cong J\).  Similarly,
  \(J\Hilm[H]J\cong J\) implies \(J\Hilm[H]= J\Hilm[H]J\), which is
  the same as~\(\Hilm[H]J\).  Thus~\(J\) is
  \(\Hilm[H]\)\nb-invariant.
\end{proof}

\begin{proof}[Proof of Theorem~\textup{\ref{the:nc_Cartan}}]
  It is clear that \ref{the:nc_Cartan0}
  implies~\ref{the:nc_Cartan1}.
  Proposition~\ref{prop:uniqueness_implies_masa} applied to the
  non-degenerate inclusions \(I\to I B I\) for \(I\in\Ideals(A)\)
  shows that~\ref{the:nc_Cartan1} implies~\ref{the:nc_Cartan3}.
  Conditions \ref{the:nc_Cartan2}--\ref{the:nc_Cartan5} are
  equivalent by Proposition~\ref{pro:virtual_commutant_slices}.

  Now assume the equivalent conditions
  \ref{the:nc_Cartan2}--\ref{the:nc_Cartan5}.  We will show that
  they imply~\ref{the:nc_Cartan6} and~\ref{the:nc_Cartan0}.  Let
  \((B_t)_{t\in S}\) be any wide, saturated \(S\)\nb-grading
  on~\(B\).  If a Hilbert subbimodule of \(B_t \cdot I_{1,t}^\bot\)
  is isomorphic as an \(A\)\nb-bimodule to an ideal in~\(A\), then
  it is already contained in~\(A\) by~\ref{the:nc_Cartan4}.  Then it
  is contained in \(B_t \cap A = I_{1,t}\), which is orthogonal to
  \(B_t \cdot I_{1,t}^\bot\).  Therefore, the action on~\(A\)
  defined by the \(S\)\nb-grading on~\(B\) is purely outer.
  Lemma~\ref{lem:expectation_unique_from_no_commutants} already
  shows that the given conditional expectation \(E\colon B\to A\)
  preserves all slices and hence also the grading
  \((B_t)_{t\in S}\).  Then
  Proposition~\ref{pro:recognise_reduced_crossed_S_using_E} implies
  that the canonical surjection \(A\rtimes S \onto B\) descends to a
  \Star{}isomorphism \(A\rtimes_\red S \congto B\).  Then~\(E\) is
  faithful.  Therefore, \ref{the:nc_Cartan2}--\ref{the:nc_Cartan5}
  imply~\ref{the:nc_Cartan6}, and to show that they
  imply~\ref{the:nc_Cartan0} it suffices to prove that the
  conditional expectations \(I B I \to I\) are unique for all
  \(I\in\Ideals(A)\).  To this end, take \(I\in\Ideals(A)\) and let
  \(P\colon I B I\to I\) be any conditional expectation.  The
  inclusion \(I \subseteq I B I\) is regular as well,
  and~\ref{the:nc_Cartan2} and
  Lemma~\ref{lem:expectation_unique_from_no_commutants} imply
  that~\(P\) preserves slices.  Thus there is at most one
  conditional expectation \(I B I \to I\) by
  Corollary~\ref{cor:unique_conditional_grading_preserving}.

  Hence we shown that the conditions
  \ref{the:nc_Cartan0}--\ref{the:nc_Cartan5} are equivalent and they
  imply~\ref{the:nc_Cartan6}.  It is trivial
  that~\ref{the:nc_Cartan6} implies~\ref{the:nc_Cartan7}.  The proof
  of the theorem will be finished by showing
  that~\ref{the:nc_Cartan7} implies~\ref{the:nc_Cartan2}.

  Let~\(S\) be a unital inverse semigroup and let~\(\Hilm\) be a
  closed and purely outer \(S\)\nb-action on~\(A\) such that
  \(A\rtimes_\red S \congto B\).  We may identify~\(B\) with
  \(A\rtimes_\red S\) and assume that the chosen conditional
  expectation~\(E\) is the canonical one on \(A\rtimes_\red S\).
  Let \(\varphi\colon I\to A\rtimes_\red S\) be a virtual commutant.
  We must show that \(\varphi(I) \subseteq A\).  Let
  \(E^\bot = \Id - E\) be the projection to \(\ker E \subseteq B\).
  The claim \(\varphi(I) \subseteq A\) is equivalent to
  \(E^\bot\circ \varphi=0\).  Since \(E^\bot\circ \varphi\) is a
  virtual commutant as well, it suffices to prove \(\varphi=0\) for
  any virtual commutant \(\varphi\colon I \to \ker E \subseteq B\).

  Let \(t\in S\).  We will use the orthogonal projections
  \(E_t\colon \rg(B_t)\cdot (A\rtimes_\red S) \to B_t\)
  for \(t\in S\) defined in
  Proposition~\ref{pro:harmonic_projections}; here
  \(\rg(B_t) = B_t B_t^*\).  The map
  \[
    \varphi_t\defeq E_t\circ \varphi|_{\rg(B_t)\cap I}\colon
    \rg(B_t)\cap I \to B_t \subseteq A\rtimes_\red S
  \]
  is a virtual commutant as well.  Assume that \(\varphi_t\neq0\)
  for some \(t\in S\).  Let \(J \defeq \rg(B_t)\cap I\).
  Lemma~\ref{lem:virtual_commutant_multiplier} provides
  \(\tau_t\in \Mult(J B J) \cap J'\) with
  \(\varphi_t(x) = \tau_t \cdot x\) for all \(x\in J\).  Now
  \(\tau_t \cdot J = \varphi_t(J) \subseteq B_t\) implies
  \(\tau_t^* \tau_t\cdot J = J \cdot \tau_t^* \tau_t\cdot J
  \subseteq B_t^* B_t \subseteq A\).  Thus
  \(\tau_t^* \tau_t \in \Mult(J)\).  Since~\(\tau_t\) commutes
  with~\(J\), the multiplier~\(\tau_t^* \tau_t\) is central by
  Lemma~\ref{lem:centre_mult}.  It is positive and non-zero because
  \(\varphi_t\neq0\) by assumption.  So there is~\(\varepsilon\)
  with \(0 < \varepsilon< \norm{\tau_t^* \tau_t}\).  Let
  \(K\defeq (\tau_t^* \tau_t - \varepsilon)_+\cdot J\).  This is a
  two-sided ideal because~\(\tau_t^* \tau_t\) is central, and it is
  non-zero because \((\tau_t^* \tau_t - \varepsilon)_+\neq0\).  When
  we restrict~\(\tau_t^* \tau_t\) to a central multiplier of~\(K\),
  it becomes strictly positive, hence invertible.  So the formula
  \[
    \varphi_t^K(x)
    \defeq \varphi_t((\tau_t ^* \tau_t)^{-1/2}x)
    = \tau_t \cdot (\tau_t ^* \tau_t)^{-1/2}x
  \]
  well defines a virtual commutant
  \(\varphi_t^K\colon K\to B_t\subseteq A\rtimes_\red S\).  It
  preserves the \(K\)-valued inner products:
  \(\varphi_t^K(x)^* \varphi_t^K(y)=x^*y\), for \(x,y\in K\).  In
  particular, \(\varphi_t^K\) is isometric.  The range
  of~\(\varphi\) is contained in \(\ker E\), \(E_t\) is
  \(A\)\nb-linear and \(\ker E \cap B_t=B_t I_{1,t}^\bot\)
  by~\eqref{eq:decompose_Hilm_closed}.  Then it follows that the
  range of~\(\varphi_t\) and therefore also of~\(\varphi_t^K\) is
  contained in \(B_t I_{1,t}^\bot\).  Thus \(\varphi_t^K(K)\) is a
  Hilbert subbimodule of \(B_t I_{1,t}^\bot\cdot K\) which is
  isomorphic to~\(K\).  This is equivalent to
  \(B_t I_{1,t}^\bot\cdot K\cong K\) by
  Lemma~\ref{lem:Hilbert_module_outerness}.  So
  \(B_t\cdot I_{1,t}^\bot\) is not purely outer, a contradiction.
  This contradiction shows that the virtual commutants~\(\varphi_t\)
  are zero for all \(t\in S\).  Equivalently,
  \(E_t\bigl(\rg(B_t)\varphi(I)\bigr)=E_t\bigl(\varphi(I\cap
  \rg(B_t))\bigr)=0\) for all \(t\in S\).  Then \(\varphi(I)=0\)
  follows by the last part of
  Proposition~\ref{pro:harmonic_projections}.
\end{proof}

%\begin{remark}
%  \label{rem:almost_faithful}
%  The proof that~\ref{the:nc_Cartan2} in Theorem~\ref{the:nc_Cartan}
%  implies~\ref{the:nc_Cartan7} only needs the expectation
%  \(E\colon B\to A\) to be almost faithful.  Therefore, we get an
%  equivalent definition of noncommutative Cartan subalgebra if we
%  replace ``faithful'' by ``almost faithful'' in
%  Definition~\ref{def:virtual_commutant}.  We do not know, however,
%  whether the condition~\ref{the:nc_Cartan0} in
%  Theorem~\ref{the:nc_Cartan} still implies
%  \ref{the:nc_Cartan2}--\ref{the:nc_Cartan7} if~\(E\) is only
%  assumed almost faithful.
%\end{remark}

\section{Uniqueness of the crossed product decomposition}
\label{sec:uniqueness_Cartan}

An important feature of Renault's theory of Cartan subalgebras is
that the twisted groupoid obtained from the Cartan subalgebra
\(A\subseteq B\) is unique up to isomorphism.  So any automorphism
\(\beta\in\Aut(B)\) with \(\beta(A)=A\) lifts to an automorphism of
the underlying twisted groupoid.  In this article, étale groupoids are
replaced by inverse semigroup actions.  These are no longer unique
up to isomorphism because an étale groupoid~\(H\) may be written as
\(H \cong X\rtimes S\) for any wide, unital inverse
subsemigroup~\(S\) of \(\Bis(H)\), but only the action of
\(\Bis(H)\) on~\(X\) is canonically defined through the
groupoid~\(H\).  We are going to define a ``refinement'' for general
inverse semigroup actions on \(\Cst\)\nb-algebras, using either of
the dual groupoids \(\widecheck{A}\rtimes S\) or
\(\dual{A}\rtimes S\).  Their inverse semigroups of bisections
coincide:

\begin{lemma}
  \label{lem:dual_groupoids_bisections}
  Let~\(\Hilm\) be an inverse semigroup action of~\(S\) on~\(A\).
  The map
  \(\kappa\colon \dual{A}\rtimes S \to \widecheck{A}\rtimes S\),
  \([t,[\pi]]\mapsto [t,\ker \pi]\), is a continuous open
  epimorphism of groupoids.  It induces an isomorphism
  between the lattices of open subsets
  \(\Open(\dual{A}\rtimes S)\cong \Open(\widecheck{A}\rtimes S)\)
  and an isomorphism of inverse semigroups
  \(\Bis(\dual{A}\rtimes S)\cong \Bis(\widecheck{A}\rtimes S)\).
\end{lemma}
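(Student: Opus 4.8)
The plan is to build $\kappa$ from the canonical surjection $q\colon\dual{A}\to\widecheck{A}=\Prim(A)$, $[\pi]\mapsto\ker\pi$, and to reduce every assertion to a property of~$q$ via the chart structure of the transformation groupoids. Recall that $q$ is continuous, open and surjective: surjective because every primitive ideal is the kernel of an irreducible representation, and continuous and open because the topology on~$\dual{A}$ is the pullback of the Jacobson topology, so the open subsets of~$\dual{A}$ are exactly the preimages $q^{-1}(U)$ of open $U\subseteq\widecheck{A}$ (this is the bijection between open sets and ideals recalled above). The identity $\widecheck{\Hilm}_t(\ker\pi)=\ker\dual{\Hilm}_t(\pi)$ says that $q$ intertwines the two dual actions, and $q$ restricts to a surjection $\dual{\s(\Hilm_t)}\onto\widecheck{\s(\Hilm_t)}$. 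Using this, $\kappa$ is well defined: if $[t,[\pi]]=[t',[\pi']]$, then $\ker\pi=\ker\pi'$ and some $v\le t,t'$ has $[\pi]\in\dual{\s(\Hilm_v)}$, whence $\ker\pi\in\widecheck{\s(\Hilm_v)}$ and $[t,\ker\pi]=[t',\ker\pi']$. The equivariance of~$q$ makes $\kappa$ a groupoid homomorphism (it respects $\rg,\s$ and the multiplication $[t,\cdot]\cdot[u,\cdot]=[tu,\cdot]$), and $\kappa$ is surjective because $q$ is. On the chart $[t,[\pi]]\mapsto[\pi]$ onto $\dual{\s(\Hilm_t)}$ the map $\kappa$ is just $q$ conjugated by the chart homeomorphisms onto the corresponding chart of $\widecheck{A}\rtimes S$; since these charts form open covers and $q$ is continuous and open, $\kappa$ is continuous and open.

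For the lattice isomorphism, the key point is that every open subset of $\dual{A}\rtimes S$ is $\kappa$\nb-saturated. First I would compute the fibres of~$\kappa$: the fibre over an arrow $[t,P]$ with $P=\ker\pi$ is exactly $\{[t,[\pi']]:\ker\pi'=P\}$, and it lies inside the single chart over~$t$. Indeed, if $[t',[\pi']]\mapsto[t,P]$, then $\ker\pi'=P$ and some $v\le t,t'$ satisfies $P\in\widecheck{\s(\Hilm_v)}$, so $[\pi']\in\dual{\s(\Hilm_v)}$ and $[t',[\pi']]=[v,[\pi']]=[t,[\pi']]$. Because open subsets of~$\dual{A}$ are $q$\nb-saturated, the restriction of any open $V\subseteq\dual{A}\rtimes S$ to each chart is $q$\nb-saturated there; as each $\kappa$\nb-fibre sits in one chart and equals a $q$\nb-fibre, it follows that $V=\kappa^{-1}(\kappa(V))$. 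Since $\kappa$ is open and surjective, $V\mapsto\kappa(V)$ and $W\mapsto\kappa^{-1}(W)$ are then mutually inverse lattice isomorphisms $\Open(\dual{A}\rtimes S)\cong\Open(\widecheck{A}\rtimes S)$.

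It remains to see that this isomorphism $\Phi\defeq\kappa(\blank)$ restricts to an isomorphism of the inverse semigroups of bisections. I would use the intrinsic characterisation that an open~$V$ is a bisection if and only if $V^{-1}V$ and $VV^{-1}$ are contained in the unit space, so that it suffices to check that $\Phi$ preserves the groupoid product of open sets, inversion, and the property of being contained in the unit space. Inversion is immediate since $\kappa$ is a homomorphism, and $\Phi$ carries $\dual{A}$ and its open subsets onto $\widecheck{A}$ and its open subsets because its restriction to units is the $q$\nb-induced isomorphism. For the product, the inclusion $\Phi(V\cdot W)\subseteq\Phi(V)\Phi(W)$ is clear; for the reverse, given composable $g'\in\Phi(V)$ and $h'\in\Phi(W)$, I lift $h'$ to some $\tilde h\in W$. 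Writing $g'=[t,\s(g')]$, equivariance of the dual action gives $q(\rg(\tilde h))=\rg(h')=\s(g')$, so $\tilde g\defeq[t,\rg(\tilde h)]$ lies in the $\kappa$\nb-fibre over~$g'$, hence in~$V$ by saturation; it is composable with~$\tilde h$, and $\tilde g\tilde h\in V\cdot W$ maps to $g'h'$. Thus $\Phi$ preserves products, and restricts to the asserted inverse semigroup isomorphism $\Bis(\dual{A}\rtimes S)\cong\Bis(\widecheck{A}\rtimes S)$.

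I expect the main obstacle to be exactly this saturation phenomenon: proving that open sets upstairs are $\kappa$\nb-saturated and that composable pairs downstairs lift to composable pairs upstairs. Both hinge on the facts that the topology on~$\dual{A}$ is pulled back from~$\widecheck{A}$ and that each $\kappa$\nb-fibre is confined to a single chart, where it is a genuine $q$\nb-fibre matched across ranges and sources by equivariance of the dual action. Once these are in hand, the homomorphism property, continuity, openness, and both isomorphisms follow formally.
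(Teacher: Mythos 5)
Your proof is correct, and its second half takes a genuinely different route from the paper's. In the first half the two arguments agree in substance: the paper verifies on the canonical bases that
\(\kappa^{-1}\bigl(\setgiven{[t,\prid]}{\prid\in\widecheck{J}}\bigr)
=\setgiven{[t,[\pi]]}{[\pi]\in\dual{J}}\)
for \(t\in S\) and ideals \(J\subseteq\s(\Hilm_t)\), which is exactly the saturation statement your fibre computation delivers for all open sets at once, and both proofs then get the lattice isomorphism from \(V\mapsto\kappa(V)\), \(W\mapsto\kappa^{-1}(W)\). The real divergence is the bisection correspondence. The paper derives an explicit normal form: an open subset of either transformation groupoid is a bisection if and only if it and its inverse are unions of basic bisections \(U_{t_i}\widecheck{J}_i\) (respectively \(W_{t_i}\dual{J}_i\)) subject to the ideal conditions \(J_i\cap J_j\subseteq I_{t_i,t_j}\); since the data describing bisections upstairs and downstairs are literally the same ideals, \(\kappa\) matches them, and products are computed explicitly. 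You avoid any normal form by using the intrinsic characterisation that an open \(V\) is a bisection iff \(VV^{-1}\cup V^{-1}V\) lies in the unit space, together with a lifting argument, through the saturated fibres, showing that the image map preserves products of open sets. Your route is more abstract and would apply verbatim to any continuous open groupoid epimorphism whose fibres are confined to single charts and whose unit-space topology is pulled back; what the paper's more computational route buys is precisely the explicit description of bisections and of their products, displayed in~\eqref{eq:product_bisections_refinement}, which is not a detour but is quoted and reused in the construction of the refined action in Proposition~\ref{prop:refine_action} and in the proof of Theorem~\ref{thm:uniqueness_purely_outer_action}, so your proof of the lemma would leave that auxiliary material still to be established. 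One small point you should make explicit in your lifting step: before asserting that \(\tilde g=[t,\rg(\tilde h)]\) lies in the fibre over \(g'\), you need \(\rg(\tilde h)\in\dual{\s(\Hilm_t)}\) for \(\tilde g\) to be a well-defined point of the chart over~\(t\); this holds because \(q(\rg(\tilde h))=\s(g')\in\widecheck{\s(\Hilm_t)}\) and \(\dual{\s(\Hilm_t)}=q^{-1}\bigl(\widecheck{\s(\Hilm_t)}\bigr)\).
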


\begin{proof}
  It readily follows from the definitions that~\(\kappa\) is a
  groupoid epimorphism.  Let \(t\in S\) and let \(J\in\Ideals(A)\)
  be contained in the source ideal
  \(\s(\Hilm_t) = \braket{\Hilm_t}{\Hilm_t}\).  Then
  \(U_t \widecheck{J}\defeq \setgiven*{[t,\prid]}{\prid\in
    \widecheck{J}}\) is a bisection of~\(\widecheck{A}\rtimes S\)
  and
  \(W_t \dual{J}\defeq \setgiven*{[t,[\pi]]}{[\pi]\in \dual{J}}\) is
  a bisection of~\(\dual{A}\rtimes S\).  Such bisections form bases
  for the topology of~\(\widecheck{A}\rtimes S\)
  and~\(\dual{A}\rtimes S\), respectively.  Moreover,
  \(\kappa^{-1}(U_t\widecheck{J})=W_t \dual{J}\).  This implies
  that~\(\kappa\) is continuous open and induces an isomorphism
  \(\Open(\dual{A}\rtimes S)\cong \Open(\widecheck{A}\rtimes S)\).
  To see that this isomorphism restricts to
  \(\Bis(\dual{A}\rtimes S)\cong \Bis(\widecheck{A}\rtimes S)\) we
  describe the corresponding bisections more explicitly.

  Any bisection \(u\subseteq \widecheck{A}\rtimes S\) is a union of
  bisections of the form~\(U_t\widecheck{J}\) for some \(t\in S\),
  \(J\subseteq\s(\Hilm_t)\).  Consider such a union
  \(u\defeq \bigcup_{i\in I} U_{t_i}\widecheck{J}_i\).  It is a
  bisection if and only if both \(\rg\) and~\(\s\) are injective
  on~\(u\).  Recall that \([t_i,\prid]=[t_j,\prid]\) for
  \(\prid \in \widecheck{J}_i\cap \widecheck{J}_j\) holds if and
  only if there is \(v\in S\) with \(v \le t_i, t_j\) and
  \(\prid\in \widecheck{\s(\Hilm_v)}\).  And this is further
  equivalent to \(\prid\in \widecheck{I}_{t_i,t_j}\)
  with~\(I_{t_i,t_j}\) as in~\eqref{eq:Itu}.  Thus~\(\s\) is
  injective if and only if \(J_i \cap J_j \subseteq I_{t_i,t_j}\)
  for all \(i,j\in I\).  We
  rewrite the injectivity of~\(\rg\) on~\(u\) through the
  injectivity of~\(\s\) on
  \(u^{-1} \defeq \setgiven{\gamma^{-1}\in \widecheck{A}\rtimes
    S}{\gamma \in u}\).  As a result, a subset~\(u\)
  of~\(\widecheck{A}\rtimes S\) is a bisection if and only if both
  \(u\) and~\(u^{-1}\) are of the form
  \(\bigcup_{i\in I} U_{t_i}\widecheck{J}_i\) for a set~\(I\) and
  \(t_i \in S\), \(J_i \in\Ideals(A)\),
  \(J_i \subseteq \s(\Hilm_{t_i})\), such that
  \(J_i \cap J_j \subseteq I_{t_i,t_j}\) for all \(i,j\in I\).

  Similar arguments show that a subset~\(w\)
  of~\(\dual{A}\rtimes S\) is a bisection if and only if both \(w\)
  and~\(w^{-1}\) are of the form
  \(\bigcup_{i\in I} W_{t_i}\dual{J}_i\) for a set~\(I\) and
  \(t_i \in S\), \(J_i \in\Ideals(A)\),
  \(J_i \subseteq \s(\Hilm_{t_i})\), such that
  \(J_i \cap J_j \subseteq I_{t_i,t_j}\) for all \(i,j\in I\).
  Since
  \(\kappa( \bigcup_{i\in I} W_{t_i}\dual{J}_i)= \bigcup_{i\in I}
  U_{t_i}\widecheck{J}_i\), we conclude that \(\kappa\) induces a
  bijection
  \(\Bis(\dual{A}\rtimes S)\cong \Bis(\widecheck{A}\rtimes S)\).  It
  preserves the semigroup product because~\(\kappa\) is a groupoid
  homomorphism.  Since we shall need this later, we describe the
  multiplication in \(\Bis(\dual{A}\rtimes S)\) and
  \(\Bis(\widecheck{A}\rtimes S)\) more explicitly.  For each
  \(t\in S\), the Rieffel correspondence gives a lattice isomorphism
  \(\widecheck{\Hilm}_{t}\colon \Ideals(\s(\Hilm_t)) \to
  \Ideals(\rg(\Hilm_t))\) that restricts to the homeomorphism
  \(\widecheck{\Hilm}_{t}\colon \widecheck{\s(\Hilm_t)} \to
  \widecheck{\rg(\Hilm_t)}\) which corresponds to the bisection
  \(U_t \defeq \setgiven*{[t,\prid]}{\prid\in
    \widecheck{\s(\Hilm_t)}}\).  In particular, for any ideals
  \(J,J'\) in \(A\) the open set
  \(U_{t}^{-1}(\widecheck{J})\widecheck{J'}\) is the primitive ideal
  space of \(\widecheck{\Hilm}_{t}^{-1}(J) J'\).  Hence for any two
  open sets \(u=\bigcup_{i} U_{t_i}\widecheck{J}_i\) and
  \(u' = \bigcup_{j} U_{s_j}\widecheck{J}_j'\) we get
  \begin{equation}
    \label{eq:product_bisections_refinement}
    u u'
    = \bigcup_{i,j} U_{t_i}\widecheck{J}_i U_{s_j}\widecheck{J}_j'
    =\bigcup_{i,j} U_{t_is_j}
    U_{s_j}^{-1}(\widecheck{J}_i)\widecheck{J}_j'=\bigcup_{i,j} U_{t_is_j}\widecheck{J_{i,j}},
  \end{equation}
  where \(J_{i,j}\defeq \widecheck{\Hilm}_{s_j}^{-1}(J_i) J_j'\) for
  all \(i,j\).  Similarly, for \(w=\bigcup_{i} W_{t_i}\dual{J}_i\)
  and \(w' = \bigcup_{j} W_{s_j}\dual{J}_j'\) we get
  \(ww'=\bigcup_{i,j} W_{t_is_j}\dual{J_{i,j}}\).  This implies
  \(\kappa(ww')=\kappa(w)\kappa(w')\).
\end{proof}

\begin{definition}
  Let \(\Hilm_j = (\Hilm_{j,t})_{t\in S_j}\) for \(j=1,2\) be
  actions of unital inverse semigroups~\(S_j\) on
  \(\Cst\)\nb-algebras~\(A_j\).  An \emph{isomorphism} between these
  actions is given by a family of isomorphisms
  \(\varphi\colon A_1 \congto A_2\), \(\psi\colon S_1 \congto S_2\),
  and \(\varphi_t\colon \Hilm_{1,t} \congto \Hilm_{2,\psi(t)}\) for
  \(t\in S_1\), such that the isomorphisms~\(\varphi_t\) intertwine
  the multiplication isomorphisms
  \(\Hilm_{j,t}\otimes_{A_2} \Hilm_{j,u} \to \Hilm_{j,t u}\) for
  \(j=1,2\) and \(t,u\in S_j\).
\end{definition}

\begin{proposition}
  \label{prop:refine_action}
  Let~\(A\) be a \(\Cst\)\nb-algebra, let~\(S\) be a unital inverse
  semigroup, and let \(\Hilm=(\Hilm_t,\mu_{t,u})_{t,u\in S}\) be an
  \(S\)\nb-action on~\(A\).  Equip~\(\widecheck{A}\) with the dual
  action and form the transformation
  groupoid~\(\widecheck{A}\rtimes S\).  Consider the unital inverse
  semigroup
  \(\tilde{S} \defeq \Bis(\widecheck{A}\rtimes S)\cong
  \Bis(\dual{A}\rtimes S)\).
  \begin{enumerate}
  \item \label{en:refine_action_1}%
    There is an \(\tilde{S}\)\nb-action~\(\tilde{\Hilm}\) on~\(A\)
    such that the \(S\)\nb-action~\(\Hilm\) factors
    through~\(\tilde{\Hilm}\) via the inverse semigroup homomorphism
    \(S\ni t \mapsto U_t\defeq \setgiven*{[t,\prid]}{\prid\in
      \widecheck{\s(\Hilm_t)}}\in \tilde{S}\) and such that
    \(\s(\tilde{\Hilm}_u) = \bigcap_{\prid\in \s(u)} \prid\) for all
    \(u\in \tilde{S}\); here \(\s(\tilde{\Hilm}_u) =\s(u)\) if we
    identify open sets in~\(\widecheck{A}\) with ideals
    in~\(\Ideals(A)\).  And any two such \(\tilde{S}\)\nb-actions
    are isomorphic through a unique isomorphism.
  \item \label{en:refine_action_2}%
    The dual groupoids \(\dual{A}\rtimes \tilde{S}\) and
    \(\widecheck{A}\rtimes \tilde{S}\) for the action
    \(\tilde{\Hilm}\) are naturally isomorphic to the corresponding
    dual groupoids~\(\dual{A}\rtimes S\) and
    \(\widecheck{A}\rtimes S\) for the action \(\Hilm\).
  \item \label{en:refine_action_3}%
    There is a canonical isomorphism
    \(A\rtimes_{\Hilm} S \cong A\rtimes_{\tilde{\Hilm}} \tilde{S}\),
    and this isomorphism descends to an isomorphism
    \(A\rtimes_{\red,\Hilm} S \cong A\rtimes_{\red,\tilde{\Hilm}}
    \tilde{S}\).
    \end{enumerate}
\end{proposition}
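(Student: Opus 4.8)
The plan is to realise the refined action concretely inside the crossed product, which I abbreviate as $A\rtimes S\defeq A\rtimes_{\Hilm}S$; there the bimodules $(\Hilm_t)_{t\in S}$ already sit as slices forming a saturated $S$\nb-grading (Remark~\ref{rem:action_vs_grading}). For a bisection $u\in\tilde S$ I would set
\[
  \tilde{\Hilm}_u\defeq\operatorname{\overline{span}}\setgiven{\Hilm_t\cdot J}{t\in S,\ J\in\Ideals(A),\ J\subseteq\s(\Hilm_t),\ U_t\widecheck{J}\subseteq u}\subseteq A\rtimes S.
\]
This manifestly depends only on~$u$, and for any presentation $u=\bigcup_i U_{t_i}\widecheck{J}_i$ as in Lemma~\ref{lem:dual_groupoids_bisections} it equals $\operatorname{\overline{span}}\bigl(\sum_i\Hilm_{t_i}\cdot J_i\bigr)$, since $U_t\widecheck{J}\subseteq u$ forces $\Hilm_t\cdot J$ into this span through the transition isomorphisms~$\vartheta$ encoded by the inclusions into $A\rtimes S$. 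Each $\tilde{\Hilm}_u$ is then a slice in $\Slice(A,A\rtimes S)$ whose source ideal is the ideal corresponding to the open set $\s(u)=\bigcup_i\widecheck{J}_i$ (the diagonal inner products already produce each $J_i$, and the dual action computed below shows that nothing larger appears), which gives the source\nb-ideal formula. Taking $J=\s(\Hilm_t)$ and $u=U_t$ yields $\tilde{\Hilm}_{U_t}=\Hilm_t$, so $\Hilm$ factors through $\tilde{\Hilm}$ via $t\mapsto U_t$, and the unit fibre is $\tilde{\Hilm}_{U_1}=\Hilm_1=A$.

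The crux of~\ref{en:refine_action_1} is that $u\mapsto\tilde{\Hilm}_u$ is a homomorphism $\tilde S\to\Slice(A,A\rtimes S)$. Using $\Hilm_{t_i}J_i\cdot\Hilm_{s_j}J_j'=\Hilm_{t_is_j}\cdot\widecheck{\Hilm}_{s_j}^{-1}(J_i)J_j'$ (Rieffel correspondence together with saturation of the $S$\nb-grading) and the product formula~\eqref{eq:product_bisections_refinement} for $uu'$, one checks $\tilde{\Hilm}_u\cdot\tilde{\Hilm}_{u'}=\tilde{\Hilm}_{uu'}$, and likewise $\tilde{\Hilm}_u^{*}=\tilde{\Hilm}_{u^{*}}$. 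By Proposition~\ref{prop:regular_vs_inverse_semigroups} this saturated $\tilde S$\nb-grading of $A\rtimes S$ is exactly a saturated $\tilde S$\nb-action $\tilde{\Hilm}$, with the maps $\tilde{\mu}_{u,u'}$ induced by multiplication in $A\rtimes S$. For uniqueness, any $\tilde S$\nb-action satisfying the two stated conditions must restrict on the basic slices to $\tilde{\Hilm}_{U_t}\cdot J\cong\Hilm_t\cdot J$, and every fibre is the closed span of such pieces for $U_t\widecheck{J}\subseteq u$; the canonical identifications glue to an isomorphism of actions, which is unique because it is forced on these generating pieces.

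For~\ref{en:refine_action_2}, the dual action of $\tilde S$ on $\widecheck A$ has $\widecheck{\tilde{\Hilm}}_u\colon\widecheck{\s(\tilde{\Hilm}_u)}\to\widecheck{\rg(\tilde{\Hilm}_u)}$, that is, $\s(u)\to\rg(u)$; evaluating on a basic piece $U_t\widecheck{J}\subseteq u$, where $\tilde{\Hilm}_u$ restricts to $\Hilm_t\cdot J$, identifies this homeomorphism with the bisection map of~$u$. Hence the $\tilde S$\nb-action on $\widecheck A$ is the tautological action of $\tilde S=\Bis(\widecheck A\rtimes S)$, and the groupoid recovery recalled in Section~\ref{sec:dual_groupoids} gives $\widecheck A\rtimes\tilde S\cong\widecheck A\rtimes S$; the identification $\tilde S\cong\Bis(\dual A\rtimes S)$ from Lemma~\ref{lem:dual_groupoids_bisections} gives $\dual A\rtimes\tilde S\cong\dual A\rtimes S$ in the same way.

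Finally, for~\ref{en:refine_action_3}, the slices $(\tilde{\Hilm}_u)_{u\in\tilde S}$ and the slices $(\Hilm_t)_{t\in S}$ have the same closed linear span in $A\rtimes S$, so $A\rtimes S$ is graded by both, with unit fibre $A$. Remark~\ref{rem:action_vs_grading} then produces surjections $A\rtimes_{\tilde{\Hilm}}\tilde S\onto A\rtimes S$ and $A\rtimes S\onto A\rtimes_{\tilde{\Hilm}}\tilde S$ that are the identity on the common fibres $\Hilm_t=\tilde{\Hilm}_{U_t}$; each composite is the identity on a dense span, so the two maps are mutually inverse isomorphisms $A\rtimes_{\Hilm}S\cong A\rtimes_{\tilde{\Hilm}}\tilde S$. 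Because this isomorphism is the identity on each $\Hilm_t=\tilde{\Hilm}_{U_t}$, where the canonical weak expectations of both crossed products are given by the same formula~\eqref{eq:formula-cond.exp} (one has $\tilde I_{U_1,U_t}=I_{1,t}$ and the relevant $\tilde\vartheta$ equals $\vartheta_{1,t}$), they agree on the dense subalgebra $\operatorname{\overline{span}}\bigl(\sum_t\Hilm_t\bigr)$ and hence everywhere. Thus it intertwines the expectations to~$A''$, maps $\Null_E$ to $\Null_E$, and descends to $A\rtimes_{\red,\Hilm}S\cong A\rtimes_{\red,\tilde{\Hilm}}\tilde S$. The main obstacle is~\ref{en:refine_action_1}: establishing well\nb-definedness of the gluing, the homomorphism property through the product formula, and the exact source ideals; parts~\ref{en:refine_action_2} and~\ref{en:refine_action_3} then follow formally.
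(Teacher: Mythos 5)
Your proposal is correct and follows essentially the same route as the paper's proof: realising each $\tilde{\Hilm}_u$ as the closed span of the pieces $\Hilm_t\cdot J$ inside the crossed product, using the bisection description and product formula from Lemma~\ref{lem:dual_groupoids_bisections} to get the homomorphism property, identifying the dual $\tilde{S}$\nb-action with the tautological bisection action for part~\ref{en:refine_action_2}, and playing the two universal-property maps against each other (plus matching the canonical weak expectations on the fibres $\Hilm_t=\tilde{\Hilm}_{U_t}$) for part~\ref{en:refine_action_3}. The only differences are cosmetic --- you span over all basic sub-bisections of $u$ rather than a fixed presentation, which makes well-definedness manifest, and your uniqueness step compresses the paper's Rieffel-correspondence/Hausdorff-completion argument into the assertion that each fibre of a qualifying action is the closed span of the pieces $\Hilm_{t_i}\cdot J_i$, which is exactly where the hypothesis $\s(\tilde{\Hilm}'_u)=\s(u)$ must be used.
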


\begin{proof}
  Let~\(B\) be an exotic crossed product, that is, a
  \(\Cst\)\nb-algebra between \(A\rtimes S\) and
  \(A\rtimes_\red S\).  View~\(\Hilm_{t_i}\) for \(i\in I\) as a
  slice for the inclusion \(A \hookrightarrow B\).  Let
  \(u\in \tilde{S}\).  The description of elements in~\(\tilde{S}\)
  in the proof of Lemma~\ref{lem:dual_groupoids_bisections} shows
  that \(u = \bigcup_{i\in I} U_{t_i}\widecheck{J}_i\) for some
  \(t_i \in S\), \(J_i \in\Ideals(A)\),
  \(J_i \subseteq \s(\Hilm_{t_i})\) with
  \(J_i \cap J_j \subseteq I_{t_i,t_j}\) for all \(i,j\in I\).  We
  claim that
  \begin{equation}\label{eq:refined_slice}
    \tilde{\Hilm}_u
    \defeq \overline{\sum_{i\in I} \Hilm_{t_i}\cdot J_i}
    \subseteq B
  \end{equation}
  is a slice.  As a closed linear span of closed
  \(A\)\nb-subbimodules, it is again a closed \(A\)\nb-subbimodule.
  We must show that
  \(\tilde{\Hilm}_u \cdot \tilde{\Hilm}_u^*\subseteq A\) and
  \(\tilde{\Hilm}_u^* \cdot \tilde{\Hilm}_u\subseteq A\).  The
  Hilbert bimodules \(\Hilm_{t_i}\cdot I_{t_i,t_j}\) and
  \(\Hilm_{t_j}\cdot I_{t_i,t_j}\) are mapped to the same subspace
  in~\(B\).  Since \(J_i \cap J_j \subseteq I_{t_i,t_j}\) for all
  \(i,j\in I\), it follows that
  \[
    \Hilm_{t_i}\cdot J_i \cdot (\Hilm_{t_j}\cdot J_j)^*
    = \Hilm_{t_i}\cdot (J_i \cap J_j) \cdot \Hilm_{t_j}^*
    \subseteq \Hilm_{t_i}\cdot I_{t_i, t_j} \cdot \Hilm_{t_j}^*
    \subseteq \Hilm_{t_j} \Hilm_{t_j}^*
    \subseteq A.
  \]
  This implies
  \(\tilde{\Hilm}_u \cdot \tilde{\Hilm}_u^*\subseteq A\).  The other
  condition \(\tilde{\Hilm}_u^* \cdot \tilde{\Hilm}_u\subseteq A\)
  follows from the same argument applied to~\(u^{-1}\), which has
  the same structure as~\(u\).

  Hence \eqref{eq:refined_slice} defines a map
  \(\tilde{S}\ni u\mapsto \tilde{\Hilm}_u\in \Slice(A,B)\).  This
  map preserves the involution because if
  \(u = \bigcup_{i\in I} U_{t_i}\widecheck{J}_i\) then
  \(u^{-1} = \bigcup_{i\in I} \widecheck{J}_iU_{t_i^*}=\bigcup_{i\in
    I} U_{t_i^*} \Prim(\widecheck{\Hilm}_{t_i}(J))\), where
  \(\widecheck{\Hilm}_{t}\colon \Ideals(\s(\Hilm_t)) \to
  \Ideals(\rg(\Hilm_t))\) is the Rieffel isomorphism, and therefore
  \[
    \tilde{\Hilm}_u ^*
    = \overline{\sum_{i} J_i\Hilm_{t_i^*}}
    = \overline{\sum_{i} \Hilm_{t_i^*}\widecheck{\Hilm}_{t_i}(J)}
    = \tilde{\Hilm}_{u^{-1}}.
  \]
  The map \(u\mapsto \tilde{\Hilm}_u\) is a semigroup homomorphism
  because if \(u' = \bigcup_{j} U_{s_j}\widecheck{J}_j'\) is another
  element in~\(\tilde{S}\), in a canonical form,
  then~\eqref{eq:product_bisections_refinement} implies
  \[
    \tilde{\Hilm}_u \tilde{\Hilm}_{u'}
    = \overline{\sum_{i,j} \Hilm_{t_i}J_i\Hilm_{s_j}J_j'}
    = \overline{\sum_{i,j} \Hilm_{t_is_j}
      \widecheck{\Hilm}_{s_j}(J_i) J_j'}
    = \tilde{\Hilm}_{uu'}.
  \]
  The multiplication in~\(B\) restricts to maps
  \(\mu_{t,u} \colon \tilde{\Hilm}_t \otimes_A \tilde{\Hilm}_u
  \congto \tilde{\Hilm}_{t u}\).  By construction,
  \(\tilde{\Hilm}_{U_t} = \Hilm_t\) for all \(t\in S\).  Thus the
  action~\(\Hilm\) factors through an action~\(\tilde{\Hilm}\)
  of~\(\tilde{S}\) with the required properties.

  Now let \((\tilde{\Hilm}'_u,\mu_{t,u}')_{t,u\in\tilde{S}}\) be any
  action of~\(\tilde{S}\) through which the given action~\(\Hilm\)
  factors; this means that \(\tilde{\Hilm}'_{U_t} = \Hilm_t\) and
  \(\mu_{U_t,U_u}' = \mu_{t,u}\) for all \(t,u \in S\).  In
  addition, we assume that \(\s(\tilde{\Hilm}'_u) = \s(u)\) for all
  \(u\in \tilde{S}\) (we identify the lattice of open subsets
  in~\(\widecheck{A}\) with \(\Ideals(A)\)).  Any action
  of~\(\tilde{S}\) comes with canonical inclusion maps
  \(\tilde{\Hilm}'_t \hookrightarrow\tilde{\Hilm}'_u\) for
  \(t, u \in \tilde{S}\) with \(t \le u\).  Write \(u\in \tilde{S}\)
  as \(u = \bigcup_{i\in I} U_{t_i}\widecheck{J_i}\).  Then
  \(U_{t_i} \ge U_{t_i}\widecheck{J_i}\le u\) for all \(i\in I\).
  The inclusion
  \(\tilde{\Hilm}'_{U_{t_i}|_{J_i}} \hookrightarrow
  \tilde{\Hilm}'_{U_{t_i}} = \Hilm_{t_i}\) is an isomorphism onto
  \(\Hilm_{t_i}\cdot J_i\).  So the inclusion maps above yield
  canonical inclusion maps
  \(\Hilm_{t_i}\cdot J_i \hookrightarrow \tilde{\Hilm}'_u\) for all
  \(i\in I\).  The resulting map
  \(\bigoplus_{i\in I} \Hilm_{t_i}\cdot J_i \to \tilde{\Hilm}'_u\)
  has dense range because its image is an \(A\)\nb-subbimodule~\(V\)
  such that~\(\braket{V}{V}\) is dense in
  \(\braket{\tilde{\Hilm}'_u}{\tilde{\Hilm}'_u} = \s(u)\).  The
  Hilbert \(A\)\nb-bimodule~\(\tilde{\Hilm}_u\) is isomorphic to the
  Hausdorff completion of
  \(\bigoplus_{i\in I} \Hilm_{t_i}\cdot J_i\) for the semi-norm
  given by the inner product
  \(\braket{(x_i)}{(y_i)} = \sum_{i,j\in I} x_i^* y_j\).  The norm
  in~\(\tilde{\Hilm}'_u\) is given by the same formula.  Hence the
  map
  \(\bigoplus_{i\in I} \Hilm_{t_i}\cdot J_i \to \tilde{\Hilm}'_u\)
  descends to an isomorphism
  \(\tilde{\Hilm}_u \congto \tilde{\Hilm}'_u\).  These isomorphisms
  for \(u\in\tilde{S}\) are clearly compatible with the
  multiplication maps and thus define an isomorphism of
  \(\tilde{S}\)\nb-actions.  Since any isomorphism of
  \(\tilde{S}\)\nb-actions is compatible with the inclusion maps for
  \(t \le u\) in~\(\tilde{S}\), this is the only isomorphism of
  \(\tilde{S}\)\nb-actions that is the identity map on
  \(\tilde{\Hilm}_{U_t} = \Hilm_t\) for all \(t\in S\).  This
  proves~\ref{en:refine_action_1}.

  The dual \(\tilde{S}\)\nb-action of~\(\tilde{\Hilm}\)
  on~\(\widecheck{A}\) turns out to be the standard action of
  bisections of \(\widecheck{A}\rtimes S\) on~\(\widecheck{A}\).  So
  \(\widecheck{A}\rtimes \tilde{S}\cong\widecheck{A}\rtimes S\)
  follows from \cite{Kwasniewski-Meyer:Essential}*{Proposition~2.2},
  applied to \(\Bis(\widecheck{A}\rtimes S)\) itself.  The above
  argument with~\(\tilde{S}\) written as
  \(\Bis(\dual{A}\rtimes S)\) shows
  \(\dual{A}\rtimes \tilde{S}\cong\dual{A}\rtimes S\).  This
  proves~\ref{en:refine_action_2}.

  We prove~\ref{en:refine_action_3}.  The homomorphism
  \(\tilde{S} \ni u\mapsto \tilde{\Hilm}_u\in \Slice(A,A\rtimes S)\)
  given by~\eqref{eq:refined_slice} is a representation
  of~\(\tilde{\Hilm}\) in \(A\rtimes S\) and thus induces a
  \Star{}homomorphism
  \(\tilde{\Phi}\colon A\rtimes \tilde{S} \to A\rtimes S\).  As
  \(\tilde{\Hilm}\) factors through~\(\Hilm\), there is also a
  representation of~\(\Hilm\) in \(A\rtimes \tilde{S}\), which
  induces a homomorphism
  \(\Phi\colon A\rtimes S \to A\rtimes \tilde{S}\) with
  \(\tilde{\Phi}\circ \Phi|_{\Hilm_t}=\Id_{\Hilm_t}\) for
  \(t\in S\).  The images of~\(\Hilm_t\) for \(t\in S\) are linearly
  dense both in \(A\rtimes S\) and \(A\rtimes \tilde{S}\) because
  any~\(\Hilm_u\) for \(u\in\tilde{S}\) is the closed linear span
  of~\(\Hilm_t\) for \(t\in S\) with \(t \le u\).  The maps \(\Phi\)
  and~\(\tilde{\Phi}\) restrict to the identity maps between the
  images of~\(\Hilm_t\) in \(A\rtimes S\) and
  \(A\rtimes \tilde{S}\).  Hence \(\Phi\) and~\(\tilde\Phi\) are
  isomorphisms inverse to each other.  And the canonical weak
  conditional expectation on \(A\rtimes \tilde{S}\) is determined by
  its restrictions to the slices \(\tilde{\Hilm}_{U_t} = \Hilm_t\)
  for \(t\in S\).  These restrictions are given by the same formula
  as the canonical weak conditional expectation on \(A\rtimes S\)
  (see Proposition~\ref{prop:conditional_expectation}).  So the
  isomorphism \(A\rtimes \tilde{S} \cong A\rtimes S\) intertwines
  the weak conditional expectations.  Then it descends to an
  isomorphism \(A\rtimes_\red \tilde{S} \cong A\rtimes_\red S\).
  This proves the statement~\ref{en:refine_action_3}.
\end{proof}

\begin{definition}
  Let~\(A\) be a \(\Cst\)\nb-algebra, let~\(S\) be a unital inverse
  semigroup, and let~\(\Hilm\) be an \(S\)\nb-action on~\(A\).  The
  essentially unique action of~\(\tilde{\Hilm}\) on~\(A\) described
  in Proposition~\ref{prop:refine_action} is called the \emph{refinement}
  of~\(\Hilm\).
\end{definition}

The following example shows that for a general inverse semigroup
action~\(\Hilm\), the inverse semigroup \(\Slice(A,B)\) may be much
larger than~\(\tilde{S}\).  Then the crossed product
\(A\rtimes_\red\Slice(A,B)\) and the dual groupoids
\(\dual{A}\rtimes \Slice(A,B)\) and
\(\widecheck{A}\rtimes \Slice(A,B)\) are much larger than the
corresponding objects for \(S\) or~\(\tilde{S}\).

\begin{example}
  Let \(S=\Z\sqcup\{0\}\) be obtained by adding a zero element to
  the group~\(\Z\) and let~\(S\) act trivially on~\(\C\).  This
  action is its own refinement because \(\widecheck{\C}\rtimes S\)
  is isomorphic to the
  group~\(\Z\) and~\(S\) is its inverse semigroup of bisections.  In
  contrast, a slice for \(\C\subseteq \Cst(\Z)\) is either~\(0\) or
  of the form \(u\cdot \C\) for a unitary in \(\Cst(\Z)\), which is
  unique up to multiplication with a scalar \(\lambda\in\C\) with
  \(\abs{\lambda}=1\).  So in this example, \(\tilde{S}\) is much
  smaller than \(\Slice(\C,\Cst(\Z))\).  And
  \(\C\rtimes \tilde{S} = \Cst(\Z)\) is remarkable because
  \(\C\rtimes \Slice(\C,\Cst(\Z))\) is not even separable.
\end{example}

\begin{theorem}
  \label{thm:uniqueness_purely_outer_action}
  Let~\(\Hilm\) be a closed and purely outer action of a unital
  inverse semigroup~\(S\) on a \(\Cst\)\nb-algebra~\(A\).  Let
  \(B\defeq A\rtimes_\red S\).  The refined action~\(\tilde{\Hilm}\)
  is canonically isomorphic to the tautological action of
  \(\Slice(A,B)\) on~\(A\).  And
  \[
    A\rtimes_\red\Slice(A,B)\cong B,\qquad
    \dual{A}\rtimes \Slice(A,B)\cong \dual{A}\rtimes S,\qquad
    \widecheck{A}\rtimes \Slice(A,B)\cong \widecheck{A}\rtimes S.
  \]
\end{theorem}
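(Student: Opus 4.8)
The plan is to identify the tautological action of $\Slice(A,B)$ on $A$ with the refinement $\tilde{\Hilm}$ by exhibiting $\Slice(A,B)$ as a copy of $\tilde{S} = \Bis(\widecheck{A}\rtimes S)$ that carries exactly the refined action, and then read off the three isomorphisms from Proposition~\ref{prop:refine_action}. The starting point is that since $\Hilm$ is closed and purely outer, Theorem~\ref{the:nc_Cartan} (specifically the equivalence of \ref{the:nc_Cartan6} and~\ref{the:nc_Cartan7}) tells us that $A\subseteq B = A\rtimes_\red S$ is a noncommutative Cartan inclusion. In particular, by the purely-outer characterisation \ref{the:nc_Cartan4}, any slice $M\in\Slice(A,B)$ that is isomorphic as a Hilbert $A$\nb-bimodule to an ideal of~$A$ already lies inside~$A$. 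This rigidity is what forces the slices to be genuinely determined by the dual groupoid data rather than being a much larger inverse semigroup as in the final example.

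First I would construct a homomorphism $\tilde{S} \to \Slice(A,B)$. This is precisely the map $u \mapsto \tilde{\Hilm}_u$ defined in~\eqref{eq:refined_slice} of Proposition~\ref{prop:refine_action}, applied with $B = A\rtimes_\red S$ itself as the exotic crossed product: it sends a bisection $u = \bigcup_{i} U_{t_i}\widecheck{J}_i$ to the slice $\overline{\sum_i \Hilm_{t_i}\cdot J_i}\subseteq B$, and we already know it is a well-defined inverse semigroup homomorphism landing in $\Slice(A,A\rtimes_\red S)$. The crux is to show this map is a \emph{bijection} onto $\Slice(A,B)$. Injectivity follows because $\tilde{S}\cong\Bis(\widecheck{A}\rtimes S)$ and two distinct bisections differ on an open set of $\widecheck{A}$, which yields distinct source ideals $\s(\tilde{\Hilm}_u)=\s(u)$, together with the fact that a slice $M$ with $\s(M)=\s(u)$ that is moreover isomorphic to $\tilde\Hilm_u$ must coincide with it by pure outerness.

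The main obstacle is \emph{surjectivity}: I must show every slice $M\in\Slice(A,B)$ arises as some $\tilde\Hilm_u$. Here I would use the dual action: any slice $M$ normalises $A$, so it induces a partial homeomorphism of $\widecheck{A}$ whose graph is a bisection $u_M\in\Bis(\widecheck{A}\rtimes S)=\tilde S$; concretely, $M$ is a Hilbert $A$\nb-bimodule and the associated Rieffel correspondence on $\widecheck{A}$ restricts to germs of the partial homeomorphisms $\widecheck{\Hilm}_t$, giving the pieces $U_{t_i}\widecheck{J}_i$. The compatibility conditions $J_i\cap J_j\subseteq I_{t_i,t_j}$ are automatic because all the $\Hilm_{t_i}\cdot J_i$ live as subspaces of the single slice $M\subseteq B$, so their overlaps agree in $B$. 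This produces $u_M\in\tilde S$ with $\tilde\Hilm_{u_M}\subseteq M$ having the same source ideal $\s(M)$, and then $M\otimes_A M^*\tilde\Hilm_{u_M}$ arguments combined with \ref{the:nc_Cartan4}---or directly the observation that $M$ and $\tilde\Hilm_{u_M}$ are isomorphic Hilbert bimodules with the same source, so $M\cdot\s(\tilde\Hilm_{u_M})^\perp=0$---force $M = \tilde\Hilm_{u_M}$. Pure outerness is exactly what rules out the ``extra'' slices (e.g.\ the unitary-twisted copies of ideals appearing in the $\C\subseteq\Cst(\Z)$ example), ensuring no slice strictly larger than a refined one can exist.

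Once the isomorphism $\tilde S\cong\Slice(A,B)$ of inverse semigroups intertwining the refined and tautological actions is established, the three displayed isomorphisms follow immediately. Indeed, by Proposition~\ref{prop:refine_action}\ref{en:refine_action_3} we have canonical isomorphisms $A\rtimes_{\red,\tilde\Hilm}\tilde S\cong A\rtimes_{\red,\Hilm} S = B$, and transporting along $\tilde S\cong\Slice(A,B)$ gives $A\rtimes_\red\Slice(A,B)\cong B$. Likewise, Proposition~\ref{prop:refine_action}\ref{en:refine_action_2} yields $\dual{A}\rtimes\tilde S\cong\dual{A}\rtimes S$ and $\widecheck{A}\rtimes\tilde S\cong\widecheck{A}\rtimes S$, which under the same identification become the remaining two isomorphisms $\dual{A}\rtimes\Slice(A,B)\cong\dual{A}\rtimes S$ and $\widecheck{A}\rtimes\Slice(A,B)\cong\widecheck{A}\rtimes S$. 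Thus the entire statement reduces to the identification of the tautological slice action with the refinement, which is the single point where pure outerness is essential.
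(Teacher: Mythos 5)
Your proposal follows the same skeleton as the paper's proof: the homomorphism \(\psi\colon\tilde{S}\to\Slice(A,B)\), \(u\mapsto\tilde{\Hilm}_u\), coming from~\eqref{eq:refined_slice}, the claim that~\(\psi\) is bijective, and then the three displayed isomorphisms obtained by transporting Proposition~\ref{prop:refine_action}.\ref{en:refine_action_2}--\ref{en:refine_action_3} along~\(\psi\).  That final reduction is correct and is exactly how the paper concludes.  The genuine gap is in surjectivity, which is the only hard step.  You assert that the partial homeomorphism of~\(\widecheck{A}\) induced by a slice \(M\) ``restricts to germs of the partial homeomorphisms \(\widecheck{\Hilm}_t\), giving the pieces \(U_{t_i}\widecheck{J}_i\)'', and that this produces \(u_M\in\tilde{S}\) with \(\tilde{\Hilm}_{u_M}\subseteq M\) \emph{having the same source ideal} \(\s(M)\).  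Nothing in the definition of a normaliser gives this: a priori a slice could induce a partial homeomorphism that does not locally agree with any \(\widecheck{\Hilm}_t\), and ``same source ideal'' is precisely the statement that must be proved.  The paper proves it by an expectation argument for which your proposal has no substitute: with \(T\defeq\setgiven{U_t\widecheck{J}}{t\in S,\ J\in\Ideals(A),\ \Hilm_t\cdot J\subseteq X}\) and \(X_1\defeq\tilde{\Hilm}_{\bigcup T}\subseteq X\), suppose \(\prid\in\widecheck{\s(X)}\setminus\widecheck{\s(X_1)}\); pick \(x\in X\) with \((x^*x)_{\prid}\neq0\), approximate \(x\) by \(y=\sum_{t\in F}y_t\in A\rtimes_\alg S\), and find \(t\in F\) with \(E(x^*y_t)_{\prid}\neq0\).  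Since \(X^*\Hilm_t\) is a slice and \(A\subseteq B\) is Cartan by Theorem~\ref{the:nc_Cartan} (this, not a direct exclusion of ``twisted'' slices, is where closedness and pure outerness enter), Lemma~\ref{lem:expectation_unique_from_no_commutants} shows that~\(E\) preserves \(X^*\Hilm_t\); Proposition~\ref{pro:recognise_reduced_crossed_S_using_E} then gives \(X^*\Hilm_t=J\oplus Y\) with \(J=X^*\Hilm_t\cap A\) and \(Y\subseteq\ker E\), whence \(\prid\in\widecheck{J}\) and \(X\cdot J=\Hilm_t\cdot J\subseteq X_1\), contradicting \(\prid\notin\widecheck{\s(X_1)}\).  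Your heuristic that pure outerness ``rules out the extra slices'' is the right intuition, but without this mechanism the local agreement of \(M\) with the \(\Hilm_t\) is unsupported and the proof does not close.

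A secondary error: your injectivity argument --- ``two distinct bisections differ on an open set of \(\widecheck{A}\), which yields distinct source ideals'' --- is false as stated.  Distinct bisections can have identical sources: if~\(A\) is simple, \(\widecheck{A}\) is a point, every non-empty bisection of \(\widecheck{A}\rtimes S\) has source equal to \(\widecheck{A}\), and yet \(\widecheck{A}\rtimes S\) may be a non-trivial group.  The correct argument, implicit in the paper, compares \emph{nested} bisections only: any \(u\in\tilde{S}\) with \(\tilde{\Hilm}_u=X\) satisfies \(u\subseteq\bigcup T\) by the definition of~\(T\), and a strict inclusion \(u\subsetneq\bigcup T\) of bisections forces \(\s(u)\subsetneq\s(\bigcup T)\) because the source map is injective on \(\bigcup T\); since \(\s(\tilde{\Hilm}_u)=\s(u)\) and \(X=X_1=\tilde{\Hilm}_{\bigcup T}\), this is impossible, so \(u=\bigcup T\) is unique.
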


\begin{proof}
  The proof of Proposition~\ref{prop:refine_action} shows that we
  may treat~\(\tilde{\Hilm}_u\) for \(u\in \tilde{S}\) as elements
  of \(\Slice(A,B)\) and that the map
  \(\psi\colon \tilde{S} \to \Slice(A,B)\),
  \(u\mapsto \tilde{\Hilm}_u\), is a homomorphism.  The
  assertion follows once we show that~\(\psi\) is an isomorphism
  --~the second part then follows from
  Proposition~\ref{prop:refine_action}.
  So let \(X\in \Slice(A,B)\).  It
  suffices to prove that there is a unique \(t\in\tilde{S}\) with
  \(X=\tilde{\Hilm}_t\), where~\(\tilde{\Hilm}_t\) is defined
  in~\eqref{eq:refined_slice}.  Let
  \[
    T = \setgiven{U_t\widecheck{J}}
    {J\in\Ideals(A),\ \Hilm_t\cdot J \subseteq X,\ t\in S} \subseteq \tilde{S}.
  \]
  We claim that the union \(\bigcup T\) is a bisection
  of~\(\widecheck{A}\rtimes S\).  Let
  \(U_t\widecheck{J}, U_w\widecheck{K} \in T\).  The products
  \((\Hilm_t J)^*\cdot (\Hilm_w K)\) and
  \((\Hilm_t J)\cdot (\Hilm_w K)^*\) in~\(B\) are contained in~\(A\)
  because \(X^* X \subseteq A\) and \(X X^* \subseteq A\).  Since
  \((\Hilm_t J)\cdot (\Hilm_w K)^*=\Hilm_t (J\cap K) \Hilm_{w^*}
  \subseteq \Hilm_{t w^*}\) we get
  \(\Hilm_t (J\cap K) \Hilm_{w^*}\subseteq \Hilm_{t w^*} \cap A =
  I_{t w^*,1}\).  This implies that any
  \(\prid \in \widecheck{\rg(\Hilm_w)}\) with
  \(\widecheck{\Hilm}_{w^*}(\prid) \in \widecheck{J} \cap
  \widecheck{K}\) already belongs to \(\widecheck{I}_{t w^*,1}\).
  And this is equivalent to \(J \cap K \subseteq I_{t,w}\) and then
  to~\(\s\) being injective on
  \(U_t\widecheck{J}\cup U_w\widecheck{K} \subseteq \dual{A}\rtimes
  S\).  Similarly, \((\Hilm_t J)\cdot (\Hilm_w K)^* \subseteq A\) is
  equivalent to~\(\rg\) being injective on
  \(U_t\widecheck{J}\cup U_w\widecheck{K} \subseteq \dual{A}\rtimes
  S\).  This implies that~\(\bigcup T\) is a bisection
  of~\(\dual{A}\rtimes S\).

  Let \(X_1\defeq \tilde{\Hilm}_{\bigcup T}\).  This is the closed
  linear span in~\(B\) of~\(\tilde{\Hilm}_t\) for \(t\in T\).  By
  construction, \(X_1\subseteq X\).  We have described~\(\tilde{S}\)
  above through unions of bisections of the form
  \(U_t\widecheck{J}\) for \(J\in\Ideals(\s(\Hilm_t))\), \(t\in S\).
  As a consequence, \(\bigcup T\) is the maximal element \(u\) in
  \(\tilde{S}\) with the property that
  \(\tilde{\Hilm}_u \subseteq X\).  Since any \(u\in\tilde{S}\) with
  \(u < \bigcup T\) satisfies \(\s(u) \subsetneq \s(\bigcup T)\), it
  follows that \(\tilde{\Hilm}_u \neq X\) for \(u\neq\bigcup T\).
  So the proof (injectivity and surjectivity of~\(\psi\)) is
  finished when we show that \(X_1 = X\).  Assume the contrary.
  Then~\(X_1\) is a proper sub-bimodule of the Hilbert
  bimodule~\(X\).  The Rieffel correspondence shows that the map
  \(Y\mapsto \s(Y)\) is a bijection between closed
  sub-bimodules in~\(X\) and ideals in~\(s(X)\).  Hence
  \(\s(X_1) \subsetneq \s(X)\).  So there is
  \(\prid \in \widecheck{\s(X)} \setminus \widecheck{\s(X_1)}\).
  Then there is \(x\in X\) with \((x^* x)_{\prid} \neq0\)
  in~\(A/\prid\).  Let
  \(\varepsilon \defeq \norm{x^* x}_{\prid}/2>0\).  There is
  \(y\in A\rtimes_\alg S\) with
  \(\norm{x} \cdot \norm{x-y}<\varepsilon\).  Let
  \(E\colon B \to A\) be the canonical conditional expectation.
  Then \(E(x^* x) = x^* x\) since~\(X\) is a slice.  Then
  \(E(x^* y)_{\prid} \neq0\) in~\(A/\prid\) because
  \(\norm{E(x^* y) - x^* x}_{\prid} = \norm{E(x^* y - x^*
    x)}_{\prid} <\varepsilon\).  Write \(y = \sum_{t\in F} y_t\) for
  a finite subset \(F\subseteq S\) and \(y_t\in \Hilm_t\) for
  \(t\in F\).  There must be \(t\in F\) with
  \(E(x^* y_t)_{\prid} \neq0\) in~\(A/\prid\).  We have
  \(x^* y_t \in X^* \Hilm_t\), and \(X^* \Hilm_t\) is a slice
  because it is a product of slices.  The inclusion \(A\subseteq B\)
  is a noncommutative Cartan subalgebra by
  Theorem~\ref{the:nc_Cartan}.
  Lemma~\ref{lem:expectation_unique_from_no_commutants} shows
  that~\(E\) maps the slice~\(X^*\Hilm_t\) into itself.  Then
  Proposition~\ref{pro:recognise_reduced_crossed_S_using_E} implies
  that \(X^* \Hilm_t = J \oplus Y\) with \(J = X^* \Hilm_t \cap A\)
  and \(Y\subseteq \ker E\).  Since \(E(x^* y_t)_{\prid}\neq0\), it
  follows that \(\prid \in \widecheck{J}\).  Now
  \(J\subseteq X^* \Hilm_t\) implies that
  \(X\cdot J = \Hilm_t \cdot J\) as slices in~\(B\).  So
  \(\Hilm_t \cdot J \subseteq X_1\).  Then
  \(\prid \in \widecheck{\s(X_1)}\), which contradicts our
  assumption.
\end{proof}

\begin{corollary}
  \label{cor:dual_groupoid_well-defined}
  Let \(A \subseteq B\) be a noncommutative Cartan subalgebra and
  let \((A,S_j,\Hilm_j)\) for \(j=1,2\) be two inverse semigroup
  actions for which there is an isomorphism
  \(A\rtimes_\red S_j \cong B\) mapping
  \(A\subseteq A\rtimes_\red S_j\) onto \(A\subseteq B\).  Then
  \(\tilde{\Hilm}_1\cong \tilde{\Hilm}_2\),
  \(\widecheck{A}\rtimes S_1\cong \widecheck{A}\rtimes S_2\) and
  \(\dual{A}\rtimes S_1\cong \dual{A}\rtimes S_2\).
\end{corollary}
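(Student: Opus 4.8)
The plan is to derive everything from Theorem~\ref{thm:uniqueness_purely_outer_action}, which already identifies the refinement of a closed and purely outer action, together with its two dual groupoids, in terms of the intrinsic inverse semigroup $\Slice(A,B)$. Since both isomorphisms $A\rtimes_\red S_j\cong B$ carry $A$ onto the same subalgebra, the inclusion $A\subseteq B$, and hence $\Slice(A,B)$ and its tautological action, are the same for $j=1,2$. So once I know that each $\Hilm_j$ is closed and purely outer, Theorem~\ref{thm:uniqueness_purely_outer_action} applied to $\Hilm_1$ and to $\Hilm_2$ identifies each refinement $\tilde\Hilm_j$ with the tautological action of $\Slice(A,B)$ and gives $\widecheck A\rtimes S_j\cong\widecheck A\rtimes\Slice(A,B)$ and $\dual A\rtimes S_j\cong\dual A\rtimes\Slice(A,B)$; chaining these through the common middle term yields the three asserted isomorphisms. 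Thus the entire content is the verification that the given actions are closed and purely outer.

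To verify this I would identify $B$ with $A\rtimes_\red S_j$, so that the bimodules $\Hilm_{j,t}$ become slices forming a saturated $S_j$\nb-grading with unit fibre $A$. This grading need not be wide, so condition~\ref{the:nc_Cartan6} of Theorem~\ref{the:nc_Cartan} cannot be applied to it directly; instead I pass to the refinement $\tilde\Hilm_j$ over $\tilde S_j=\Bis(\widecheck A\rtimes S_j)$. By Proposition~\ref{prop:refine_action} this changes neither the reduced crossed product nor the dual groupoids, and because $\s(\tilde\Hilm_{j,u})=\s(u)$ one checks that the refined grading is wide: for $u\in\tilde S_j$ the diagonal part $\tilde\Hilm_{j,u}\cap A$ is precisely the ideal attached to $u\cap\widecheck A$, which is the largest idempotent $v\le 1,u$ in $\tilde S_j$. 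Condition~\ref{the:nc_Cartan6}, applied to this wide saturated grading, then shows that $\tilde\Hilm_j$ is closed and purely outer. Moreover $\tilde\Hilm_j$ is its own refinement: by Proposition~\ref{prop:refine_action}\ref{en:refine_action_2} its dual groupoid is again $\widecheck A\rtimes S_j$, so its refinement lives over $\tilde S_j$, has source ideals $\s(u)$, and receives $\tilde\Hilm_j$, whence the uniqueness clause of Proposition~\ref{prop:refine_action}\ref{en:refine_action_1} forces it to equal $\tilde\Hilm_j$. Applying Theorem~\ref{thm:uniqueness_purely_outer_action} to the closed and purely outer action $\tilde\Hilm_j$ therefore identifies $\tilde\Hilm_j$ with the tautological action of $\Slice(A,B)$, which is exactly what the first assertion needs since $\tilde\Hilm_j$ is by definition the refinement of $\Hilm_j$.

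Alternatively one may transfer the two properties back to $\Hilm_j$ itself and apply Theorem~\ref{thm:uniqueness_purely_outer_action} to $\Hilm_j$: closedness is a property of the single weak conditional expectation shared by $\Hilm_j$ and $\tilde\Hilm_j$ under Proposition~\ref{prop:refine_action}\ref{en:refine_action_3} and Definition~\ref{def:Hausdorffness}, and then pure outerness of $\Hilm_j$ is automatic, because a sub-bimodule of $\Hilm_{j,t}\cdot I_{1,t}^\bot$ isomorphic to an ideal would, by condition~\ref{the:nc_Cartan4}, be a slice contained in $A$, while by the decomposition~\eqref{eq:decompose_Hilm_closed} it lies in $\ker E$, and $A\cap\ker E=0$. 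The main obstacle is precisely the possible non-wideness of the grading coming from an arbitrary realisation $B\cong A\rtimes_\red S_j$: one cannot invoke condition~\ref{the:nc_Cartan6} without first interposing the refinement machinery of Proposition~\ref{prop:refine_action}, and the two delicate points there are checking that the refined grading is genuinely wide and that $\tilde\Hilm_j$ is its own refinement, both resting on the source-ideal normalisation $\s(\tilde\Hilm_{j,u})=\s(u)$.
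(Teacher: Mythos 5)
Your overall architecture is the same as the paper's: reduce everything to showing that the given actions are closed and purely outer, then apply Theorem~\ref{thm:uniqueness_purely_outer_action}, whose output (the tautological \(\Slice(A,B)\)-action and its dual groupoids) depends only on the inclusion \(A\subseteq B\), so that chaining through \(\Slice(A,B)\) gives the three isomorphisms. However, the premise on which your entire detour rests --- that the saturated \(S_j\)-grading coming from \(B\cong A\rtimes_\red S_j\) ``need not be wide'', so that condition~\ref{the:nc_Cartan6} of Theorem~\ref{the:nc_Cartan} cannot be applied to it --- is false. The canonical grading of a reduced crossed product is \emph{always} wide: by Remark~\ref{rem:cross_product_graded} (citing \cite{Buss-Exel-Meyer:Reduced}*{Proposition~4.3}) the map \(A\rtimes_\alg S_j\to A\rtimes_\red S_j\) is injective, and in \(A\rtimes_\alg S_j\) the intersection of \(\Hilm_{j,t}\) and \(\Hilm_{j,u}\) is the closed linear span of the \(\Hilm_{j,v}\) with \(v\le t,u\); this is exactly the argument written out in the proof of Proposition~\ref{pro:recognise_reduced_crossed_S_using_E}, which at that point uses nothing beyond injectivity of \(A\rtimes_\alg S\to B\). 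Taking \(u=1\) gives \(\Hilm_{j,t}\cap A=I_{1,t}\), which is wideness. So Theorem~\ref{the:nc_Cartan}.\ref{the:nc_Cartan6} applies directly to the gradings \((\Hilm_{j,t})_{t\in S_j}\), and this direct application \emph{is} the paper's proof: the two actions are closed and purely outer by Theorem~\ref{the:nc_Cartan}, and then Theorem~\ref{thm:uniqueness_purely_outer_action} finishes.

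Moreover, the workaround you build on this premise has a genuine gap at its load-bearing step. You assert that the refined grading \((\tilde{\Hilm}_{j,u})_{u\in\tilde S_j}\) is wide ``because \(\s(\tilde{\Hilm}_{j,u})=\s(u)\)''. The source-ideal normalisation only identifies the lower bound \(\overline{\sum_{v\le 1,u}\s(\tilde{\Hilm}_{j,v})}\) as the ideal of \(u\cap\widecheck{A}\); it gives no upper bound on the intersection \(\tilde{\Hilm}_{j,u}\cap A\) computed inside \(B\), and ruling out that this intersection is strictly larger is the entire content of wideness. The only available proof of that bound is the same structural fact quoted above (injectivity of the algebraic crossed product in the reduced one), now applied to the action \(\tilde{\Hilm}_j\) of \(\tilde S_j\) via Proposition~\ref{prop:refine_action}.\ref{en:refine_action_3} --- and once you invoke it, it already proves wideness of the original grading, so the detour evaporates. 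Your alternative route in the last paragraph inherits the same gap, since it extracts closedness of \(\Hilm_j\) from the unproved wideness of the refined grading. The remaining ingredients of your argument --- pure outerness from condition~\ref{the:nc_Cartan4} together with~\eqref{eq:decompose_Hilm_closed}, the self-refinement of \(\tilde{\Hilm}_j\) via the uniqueness clause of Proposition~\ref{prop:refine_action}, and the chaining through \(\Slice(A,B)\) --- are all correct.
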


\begin{proof}
  The two actions are closed and purely outer by
  Theorem~\ref{the:nc_Cartan}.  By
  Theorem~\ref{thm:uniqueness_purely_outer_action}, they have
  isomorphic refinements and isomorphic dual groupoids.
\end{proof}

The following example shows that the uniqueness result in
Corollary~\ref{cor:dual_groupoid_well-defined} fails for actions
that are not purely outer.

\begin{example}
  \label{exm:different_dual_groupoids}
  The groups \(\Z/2 \times \Z/2\) and~\(\Z/4\) are not isomorphic,
  but their group \(\Cst\)\nb-algebras are isomorphic to the algebra
  of functions on the discrete set with four points.  The inclusion
  of the unit does not add extra information.  So there are two
  essentially different ways to realise the unital inclusion
  \(\C\subseteq \C^4\) as the unital inclusion
  \(\C \subseteq \Cst_\red(H)\) for a discrete group~\(H\).  The
  resulting dual groupoids are the two groups \(\Z/2 \times \Z/2\)
  and~\(\Z/4\).
\end{example}

\section{Aperiodic inclusions versus Cartan \texorpdfstring{$\Cst$}{C*}-subalgebras}
\label{sec:pure_outer_vs_aperiodic}

Let \(A\subseteq B\) be a \(\Cst\)-inclusion.  We say that \(A\)
\emph{detects ideals} in \(B\) if \(J\cap A=0\) for an ideal~\(J\)
in \(B\) implies \(J=0\).  Theorem~\ref{the:nc_Cartan} implies that
a general non-commutative Cartan \(\Cst\)\nb-subalgebra
\(A\subseteq B\) does not detect ideals in~\(B\).  Indeed, there are
counterexamples where \(B=A\rtimes_\alpha \Z\) is the crossed
product by a single automorphism~\(\alpha\) on a separable
\(\Cst\)\nb-algebra.  An example of a purely outer
automorphism~\(\alpha\) for which the inclusion
\(A \hookrightarrow A\rtimes_\alpha \Z\) does not detect ideals is
described in \cite{Kwasniewski-Meyer:Aperiodicity}*{Examples 2.14
  and~2.21}.  For such automorphisms, \(A\) is a Cartan subalgebra
of \(A\rtimes_\alpha\Z\), but~\(A\) does not detect ideals in
\(A\rtimes_\alpha\Z\) (see
\cite{Kwasniewski-Meyer:Aperiodicity}*{Theorem~9.12}).  This means
that there is a non-trivial quotient \((A\rtimes_\alpha\Z)/J\) into
which~\(A\) embeds.  This quotient cannot admit an \(A\)\nb-valued
conditional expectation because then the conditional expectation
\(A\rtimes_\alpha \Z\to A\) would not be unique, in contradiction to
Theorem~\ref{the:nc_Cartan}.  This is why the lack of ``uniqueness''
for such counterexamples is not seen in Exel's theory of
noncommutative Cartan subalgebras.

In this section we discuss conditions under which a noncommutative
Cartan \(\Cst\)\nb-subalgebra \(A\subseteq B\) does detect ideals,
and thus is a good tool to study the structure of~\(B\).
We compare Cartan inclusions with aperiodic inclusion and combine
the results above with some of the results on these inclusions
in~\cite{Kwasniewski-Meyer:Essential}.  If the
\(\Cst\)\nb-inclusion \(A\subseteq B\) is aperiodic and
\(E\colon B\to A\) is a faithful conditional expectation, then~\(A\)
detects ideals in~\(B\) and, even more, \(A\) \emph{supports}~\(B\)
in the sense that for every \(b\in B^+\setminus\{0\}\) there is
\(a\in A^+\setminus\{0\}\) with \(a \precsim b\) (see
\cite{Kwasniewski-Meyer:Essential}*{Theorem~1.1}).
Here~\(\precsim\) denotes the Cuntz preorder on the set~\(B^+\) of
positive elements in~\(B\).

\begin{definition}[\cites{Kwasniewski-Meyer:Aperiodicity,
    Kwasniewski-Meyer:Essential}]
  \label{def:aperiodicity}
  Let~\(X\) be a normed \(A\)\nb-bimodule.  An element \(x\in X\)
  \emph{satisfies Kishimoto's condition} if for each non-zero
  hereditary subalgebra \(D\subseteq A\) and \(\varepsilon>0\) there
  is \(a\in D^+\) with \(\norm{a}=1\) and
  \(\norm{a x a}<\varepsilon\).  We call~\(X\) aperiodic if all
  elements \(x\in X\) satisfy Kishimoto's condition.  A
  \emph{\(\Cst\)\nb-inclusion \(A\subseteq B\) is aperiodic} if the
  Banach \(A\)\nb-bimodule \(B/A\) is aperiodic.  An \emph{inverse
    semigroup action~\(\Hilm\) is aperiodic} if the Hilbert
  \(A\)\nb-bimodules \(\Hilm_t \cdot I_{1,t}^\bot\) with
  \(I_{1,t}^\bot =\setgiven{a\in A}{a\cdot I_{1,t}=0}\) are
  aperiodic for all \(t\in S\).
\end{definition}

\begin{proposition}
  \label{prop:aperiodic_implies_uniqueness}
  Let \(A\subseteq B\) be a \(\Cst\)\nb-inclusion with a conditional
  expectation \(E\colon B\to A\).  The inclusion \(A\subseteq B\) is
  aperiodic if and only if the \(A\)\nb-bimodule \( \ker E\) is
  aperiodic.  In this case, \(E\) is the only conditional
  expectation \(B\to A\), and~\(E|_{I B I}\) is the only conditional
  expectation \(I B I \to I\) for all \(I\in\Ideals(A)\).
\end{proposition}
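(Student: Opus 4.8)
The plan is to prove the two assertions separately, reducing the uniqueness statement to a single elementary fact about Kishimoto's condition inside~\(A\) itself.

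For the equivalence, I would first observe that the quotient map restricts to an \(A\)\nb-bimodule bijection \(\ker E \to B/A\), \(x\mapsto x+A\), which distorts norms by at most a factor~\(2\): one has \(\norm{x+A}\le\norm{x}\) trivially, while \(E(x-a)=-a\) for \(x\in\ker E\) and \(a\in A\) gives \(\norm{a}\le\norm{x-a}\) and hence \(\norm{x}\le 2\norm{x+A}\). Since \(axa\in\ker E\) whenever \(x\in\ker E\), the norms \(\norm{a\,(x+A)\,a}_{B/A}\) and \(\norm{axa}\) are comparable up to this factor. As Kishimoto's condition only refers to the bimodule structure and to making \(\norm{axa}\) small, an element \(x\in\ker E\) satisfies it if and only if \(x+A\in B/A\) does. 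Because the map is bijective, \(\ker E\) is aperiodic if and only if \(B/A\) is, which is exactly the definition of \(A\subseteq B\) being aperiodic.

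For uniqueness, suppose \(P\colon B\to A\) is another conditional expectation and set \(\phi\defeq P-E\). This is a bounded (norm \(\le 2\)), \(*\)\nb-preserving, \(A\)\nb-bimodule map that vanishes on~\(A\), so it descends to a bounded \(A\)\nb-bimodule map \(\bar\phi\colon B/A\to A\) whose range is a \(*\)\nb-closed linear subspace of~\(A\). Writing \(\bar{x}=x+A\) and \(c=\bar\phi(\bar{x})\), bilinearity gives \(aca=\bar\phi(a\bar{x}a)\), whence \(\norm{aca}\le\norm{\bar\phi}\,\norm{a\bar{x}a}_{B/A}\); aperiodicity of \(B/A\) then makes the right-hand side arbitrarily small over suitable \(a\in D^+\) with \(\norm a=1\), for every non-zero hereditary \(D\subseteq A\). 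Thus every element of the range of~\(\bar\phi\) \emph{itself} satisfies Kishimoto's condition inside~\(A\).

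The crux, which I expect to be the main obstacle, is the purely \(\Cst\)\nb-algebraic claim that an element \(c\in A\) satisfying Kishimoto's condition relative to~\(A\) (viewed as a bimodule over itself) must vanish. Since the range of~\(\bar\phi\) is \(*\)\nb-closed, it suffices to treat self-adjoint~\(c\). Assuming \(c=c^*\neq0\) and, after replacing \(c\) by~\(-c\) if necessary, that \(\mu\defeq\norm c\in\operatorname{sp}(c)\), I would pick a continuous \(f\colon\R\to[0,1]\) with \(f(\mu)=1\) and \(f\equiv0\) on \((-\infty,\mu-\eta]\) for small \(\eta\in(0,\mu)\), set \(d\defeq f(c)\) and \(D\defeq\overline{dAd}\neq0\). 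The functional calculus bound \(\norm{(c-\mu)g(d)}\le\eta\), valid for every continuous \(g\) with \(0\le g\le1\) and \(g(0)=0\), applied along an approximate unit \(g_n(d)\) of~\(D\), yields \(\norm{ca-\mu a}\le\eta\norm a\) for all \(a\in D\). Hence for \(a\in D^+\) with \(\norm a=1\) we get \(\norm{aca}\ge\mu\norm{a^2}-\eta\norm a=\mu-\eta>0\), contradicting Kishimoto's condition. Therefore \(\bar\phi=0\), i.e.\ \(P=E\). For the ideals I would run the same two steps for each inclusion \(I\subseteq IBI\): here \(E|_{IBI}\colon IBI\to I\) is a conditional expectation and \(\ker(E|_{IBI})=IBI\cap\ker E\) is an \(I\)\nb-subbimodule of the aperiodic \(A\)\nb-bimodule~\(\ker E\); since every non-zero hereditary subalgebra of~\(I\) is hereditary in~\(A\), Kishimoto's condition over~\(A\) restricts to Kishimoto's condition over~\(I\), so \(\ker(E|_{IBI})\) is aperiodic over~\(I\) and \(I\subseteq IBI\) is aperiodic. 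The uniqueness argument above, with \(A\) replaced by~\(I\), then shows that \(E|_{IBI}\) is the only conditional expectation \(IBI\to I\).
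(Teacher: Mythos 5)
Your proof is correct, and its skeleton matches the paper's: both arguments transport Kishimoto's condition into \(A\) along a bounded \(A\)\nb-bimodule map and then invoke the fact that elements of \(A\) satisfying Kishimoto's condition must vanish. The differences lie in how the two key ingredients are supplied, and they make your write-up self-contained where the paper cites its companion paper. For uniqueness, the paper applies \(P\) to \(P(x)^* x\in\ker E\) to produce the \emph{positive} element \(P(x)^* P(x)\), and then quotes \cite{Kwasniewski-Meyer:Essential}*{Lemma~5.10} (zero is the only positive element of \(A\) satisfying Kishimoto's condition); you instead work with \(\phi\defeq P-E\), reduce to self-adjoint elements of its \(*\)\nb-closed range, and prove the vanishing statement from scratch by the functional-calculus argument with \(d=f(c)\), \(D=\overline{dAd}\) and the estimate \(\norm{(c-\mu)g(d)}\le\eta\) along an approximate unit of~\(D\) -- this is in essence a direct proof of the cited lemma, in the marginally more general self-adjoint form (note that the paper's positivity trick is exactly what lets it get away with the positive case only). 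For the statement about ideals, the paper cites \cite{Kwasniewski-Meyer:Essential}*{Proposition~5.15} for the fact that \(I\subseteq IBI\) inherits aperiodicity from \(A\subseteq B\), whereas you prove precisely the special case needed, via the observation that a hereditary subalgebra of the ideal \(I\) is hereditary in~\(A\), so that Kishimoto's condition over~\(A\) restricts to Kishimoto's condition over~\(I\) on \(\ker(E|_{IBI})=IBI\cap\ker E\). The trade-off: the paper's proof is shorter because it leans on external results; yours is longer but elementary and self-contained, and it makes the mechanism behind both citations explicit. All the individual steps you give (the factor-\(2\) comparison of norms between \(\ker E\) and \(B/A\), the transfer estimate \(\norm{aca}\le\norm{\bar\phi}\,\norm{a\bar{x}a}\), the spectral argument, and the recursion through the already-proved parts for the inclusion \(I\subseteq IBI\)) are sound.
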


\begin{proof}
  Since \(B=\ker E\oplus A\), there is a contractive
  \(A\)\nb-bimodule isomorphism \(\ker E \to B/A\) with bounded
  inverse.  This implies that \(\ker E\) is aperiodic if and only
  if~\(B/A\) is aperiodic.  Now let \(P\colon B\to A\subseteq B\) be
  a conditional expectation.  Assume~\(\ker E\) to be aperiodic, and
  let \(x\in \ker E\).  Then \(P(x)^* x \in \ker E\) and
  \(a \defeq P(P(x)^* x) = P(x)^* P(x) \ge0\) because~\(P\) is
  \(A\)\nb-bilinear.  Since~\(P\) is a bounded bimodule map, \(a\)
  inherits Kishimoto's condition from \(P(x)^* x \in \ker E\).  By
  \cite{Kwasniewski-Meyer:Essential}*{Lemma~5.10}, \(0\) is the only
  positive element of~\(A\) that satisfies Kishimoto's condition.
  So \(a=0\) and then \(P(x)=0\).  Both \(P\) and~\(E\) are
  idempotent maps on~\(B\) with the same image~\(A\).  We have shown
  that \(\ker E \subseteq \ker P\), and this implies \(P = E\).
  So~\(E\) is the only conditional expectation \(B\to A\).  If
  \(I\in\Ideals(A)\), then the inclusion \(I \subseteq I B I\)
  inherits aperiodicity from \(A\subseteq B\) by
  \cite{Kwasniewski-Meyer:Essential}*{Proposition~5.15}.  Therefore,
  the conditional expectation~\(E|_{I B I}\) is unique as well.
\end{proof}

\begin{theorem}
  \label{thm:characterisation_of_aperiodic_crossed_products}
  Let \(A\subseteq B\) be a \(\Cst\)\nb-inclusion and consider the
  following conditions:
  \begin{enumerate}
  \item \label{enu:aperiodic_crossed_products1}%
    \(A\subseteq B\) is regular and aperiodic, and there is a
    faithful conditional expectation \(E\colon B\to A\);

  \item \label{enu:aperiodic_crossed_products2}%
    \(B\cong A\rtimes_\red S\) for a closed and aperiodic
    action~\(\Hilm\) of an inverse semigroup~\(S\) on~\(A\), with an
    isomorphism that restricts to the canonical embedding on~\(A\);
  \item \label{enu:noncommutative_Cartans333}%
    \(A\subseteq B\) is a noncommutative Cartan subalgebra.
  \end{enumerate}
  Then \ref{enu:aperiodic_crossed_products1}\(\Leftrightarrow\)%
  \ref{enu:aperiodic_crossed_products2}\(\Rightarrow\)%
  \ref{enu:noncommutative_Cartans333}, and all conditions
  \ref{enu:aperiodic_crossed_products1}--\ref{enu:noncommutative_Cartans333}
  are equivalent if~\(A\) is prime or contains an essential ideal of
  Type~I.

  Assume the equivalent conditions
  \ref{enu:aperiodic_crossed_products1}
  and~\ref{enu:aperiodic_crossed_products2}.  Then~\(A\) detects
  ideals in~\(B\) and~\(A\) supports~\(B\).  And~\(B\) is simple if
  and only if~\(\Hilm\) is minimal, that is, there are no
  non-trivial ideals in~\(A\) that are \(\Hilm_t\)-invariant for all
  \(t\in S\).  If~\(B\) is simple, then it is purely infinite if and
  only if all elements of \(A^+\setminus\{0\}\) are infinite
  in~\(B\).
\end{theorem}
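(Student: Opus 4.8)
The plan is to run the cycle \(\ref{enu:aperiodic_crossed_products2}\Rightarrow\ref{enu:aperiodic_crossed_products1}\Rightarrow\ref{enu:noncommutative_Cartans333}\) together with \(\ref{enu:aperiodic_crossed_products1}\Rightarrow\ref{enu:aperiodic_crossed_products2}\), and to handle the conditional implication \(\ref{enu:noncommutative_Cartans333}\Rightarrow\ref{enu:aperiodic_crossed_products1}\) under the hypotheses on~\(A\) separately. For \(\ref{enu:aperiodic_crossed_products2}\Rightarrow\ref{enu:aperiodic_crossed_products1}\) I would observe that \(A\subseteq A\rtimes_\red S\) is regular by Remark~\ref{rem:action_vs_grading}, and that closedness makes the canonical weak expectation \(A\)\nb-valued, hence a genuine \emph{faithful} conditional expectation by Proposition~\ref{prop:fast_intro_for_reduced}. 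Aperiodicity of the inclusion amounts to aperiodicity of \(\ker E\), which by the closed decomposition~\eqref{eq:decompose_Hilm_closed} is the closed linear span of the bimodules \(\Hilm_t\cdot I_{1,t}^\bot\); the equivalence between aperiodicity of this span and of each summand is precisely the comparison of inclusion- and action-aperiodicity recorded in \cite{Kwasniewski-Meyer:Essential}.

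For \(\ref{enu:aperiodic_crossed_products1}\Rightarrow\ref{enu:noncommutative_Cartans333}\) I would invoke Proposition~\ref{prop:aperiodic_implies_uniqueness}: aperiodicity forces \(E|_{I B I}\) to be the unique conditional expectation \(I B I\to I\) for every \(I\in\Ideals(A)\), which is exactly condition~\ref{the:nc_Cartan0} of Theorem~\ref{the:nc_Cartan}, so \(A\subseteq B\) is a noncommutative Cartan subalgebra. To close the loop \(\ref{enu:aperiodic_crossed_products1}\Rightarrow\ref{enu:aperiodic_crossed_products2}\) I would grade~\(B\) tautologically over \(\Slice(A,B)\), which is saturated and wide. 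Since \(A\subseteq B\) now has no nontrivial virtual commutants, Lemma~\ref{lem:expectation_unique_from_no_commutants} shows that~\(E\) preserves all slices, so Proposition~\ref{pro:recognise_reduced_crossed_S_using_E} gives that the action is closed and that the canonical epimorphism descends to an isomorphism \(A\rtimes_\red\Slice(A,B)\congto B\). This action is aperiodic because each off-diagonal piece \(M\cap\ker E\) is a subbimodule of the aperiodic bimodule \(\ker E\), and Kishimoto's condition passes to subbimodules. This step, hinging on grading preservation, is the structural crux of the forward direction.

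The conditional implication \(\ref{enu:noncommutative_Cartans333}\Rightarrow\ref{enu:aperiodic_crossed_products1}\) is where the real work lies. By Theorem~\ref{the:nc_Cartan}\ref{the:nc_Cartan7} a noncommutative Cartan inclusion is modelled by a closed and \emph{purely outer} action with \(B\cong A\rtimes_\red S\), and the only issue is upgrading ``purely outer'' to ``aperiodic''. Here I would appeal to the results of \cite{Kwasniewski-Meyer:Aperiodicity} showing that, when~\(A\) is prime or contains an essential ideal of Type~I, every purely outer Hilbert \(A\)\nb-bimodule is automatically aperiodic; applied to \(\Hilm_t\cdot I_{1,t}^\bot\) this promotes the action to an aperiodic one and yields~\ref{enu:aperiodic_crossed_products2}, hence~\ref{enu:aperiodic_crossed_products1}. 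I expect this to be the main obstacle, since it rests on the delicate interplay between pure outerness and Kishimoto's condition and genuinely needs the hypotheses on the ideal structure of~\(A\).

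Finally, under the equivalent conditions~\ref{enu:aperiodic_crossed_products1} and~\ref{enu:aperiodic_crossed_products2} the inclusion is aperiodic with a faithful conditional expectation, so \(A\) detects ideals and supports~\(B\) directly by \cite{Kwasniewski-Meyer:Essential}*{Theorem~1.1}. Because \(A\) detects ideals, every ideal of \(B\cong A\rtimes_\red S\) is generated by the invariant ideal in which it meets~\(A\), so \(B\) is simple exactly when~\(A\) has no nontrivial \(\Hilm_t\)-invariant ideals, that is, when~\(\Hilm\) is minimal. For the last claim I would use that \(A\) supports~\(B\): given that every \(a\in A^+\setminus\{0\}\) is infinite in~\(B\), any \(b\in B^+\setminus\{0\}\) Cuntz-dominates some \(a\in A^+\setminus\{0\}\) with \(a\precsim b\), and in the simple algebra~\(B\) this makes~\(b\) infinite, yielding pure infiniteness; the converse is immediate, and both directions are recorded in \cite{Kwasniewski-Meyer:Essential}.
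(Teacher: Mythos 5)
Your treatment of the main equivalences is essentially the paper's own proof: \ref{enu:aperiodic_crossed_products1}\(\Rightarrow\)\ref{enu:noncommutative_Cartans333} via Proposition~\ref{prop:aperiodic_implies_uniqueness} and condition~\ref{the:nc_Cartan0} of Theorem~\ref{the:nc_Cartan}; \ref{enu:aperiodic_crossed_products1}\(\Rightarrow\)\ref{enu:aperiodic_crossed_products2} by realising~\(B\) as a reduced crossed product over a wide saturated grading (the paper quotes Theorem~\ref{the:nc_Cartan} and then \cite{Kwasniewski-Meyer:Essential}*{Proposition~6.3}; you unfold the same mechanism through Lemma~\ref{lem:expectation_unique_from_no_commutants} and Proposition~\ref{pro:recognise_reduced_crossed_S_using_E}, and your direct verification that the action is aperiodic --~the off-diagonal pieces \(B_t\cdot I_{1,t}^\bot\) lie in \(\ker E\), and Kishimoto's condition is an elementwise condition, so it passes to subspaces~-- is correct); \ref{enu:aperiodic_crossed_products2}\(\Rightarrow\)\ref{enu:aperiodic_crossed_products1} and the conditional implication \ref{enu:noncommutative_Cartans333}\(\Rightarrow\)\ref{enu:aperiodic_crossed_products2} rest on the same comparison of aperiodicity with pure outerness that the paper cites from \cite{Kwasniewski-Meyer:Essential}; your appeal to \cite{Kwasniewski-Meyer:Aperiodicity} is the same underlying result.

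The genuine gap is in your final paragraph. From ``\(A\) detects ideals'' you infer that ``every ideal of~\(B\) is generated by the invariant ideal in which it meets~\(A\)''. That inference is invalid: detection only says that \(J\cap A=0\) forces \(J=0\), whereas your assertion is that \(J=\overline{B(J\cap A)B}\) for every ideal \(J\) of~\(B\), i.e.\ that~\(A\) \emph{separates} ideals.  This is strictly stronger and fails in general for aperiodic actions (it is tied to exactness-type hypotheses on the action), which is precisely why the paper does not argue this way but cites \cite{Kwasniewski-Meyer:Essential}*{Theorems 6.5 and~6.6}.  Moreover, the false claim is not needed for one direction and does not deliver the other.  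For ``minimal \(\Rightarrow\) simple'' it suffices that \(J\cap A\) is a non-zero invariant ideal: it is non-zero by detection, and invariant because \(M(J\cap A)M^*\subseteq MM^*\cap J\subseteq A\cap J\) for every slice~\(M\); minimality then gives \(J\cap A=A\), hence \(J\supseteq AB=B\).  For ``simple \(\Rightarrow\) minimal'' detection is of no help at all: one must show that a non-trivial invariant ideal \(I\in\Ideals(A)\) generates a \emph{proper} ideal of~\(B\).  This follows because invariance gives \(\Hilm_t\cdot I\subseteq I\cdot\Hilm_t\), hence \(E\bigl(\overline{BIB}\bigr)\subseteq I\) by \(A\)\nb-bilinearity and continuity of the (here \(A\)\nb-valued) canonical expectation, so \(\overline{BIB}\cap A=I\neq A\).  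Either supply this argument or cite \cite{Kwasniewski-Meyer:Essential}*{Theorems 6.5 and~6.6} as the paper does.  Your pure infiniteness paragraph is acceptable since, like the paper (via \cite{Kwasniewski-Meyer:Essential}*{Corollary~6.7}), it ultimately defers to the cited results.
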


\begin{proof}
  By our main theorem and
  \cite{Kwasniewski-Meyer:Essential}*{Theorem~6.14},
  \ref{enu:aperiodic_crossed_products2} implies
  \ref{enu:noncommutative_Cartans333} and the converse holds
  when~\(A\) is prime or contains an essential ideal of Type~I.
  \ref{enu:aperiodic_crossed_products2} implies
  \ref{enu:aperiodic_crossed_products1} by
  \cite{Kwasniewski-Meyer:Essential}*{Proposition 6.3}.  Now assume
  \ref{enu:aperiodic_crossed_products1}.  The last part of
  Proposition~\ref{prop:aperiodic_implies_uniqueness} verifies
  \ref{the:nc_Cartan0} in Theorem~\ref{the:nc_Cartan}.  Hence
  \(A\subseteq B\) is a noncommutative Cartan subalgebra, and
  \(B\cong A\rtimes_\red S\) for a closed action~\(\Hilm\) of an
  inverse semigroup~\(S\) on~\(A\).  Under this isomorphism
  \(E\colon B\to A\) is the canonical conditional expectation.
  Hence~\(\Hilm\) is aperiodic by
  \cite{Kwasniewski-Meyer:Essential}*{Proposition 6.3}.

 Thus \ref{enu:aperiodic_crossed_products1} and \ref{enu:aperiodic_crossed_products2} are equivalent.
 The last part of the assertion  follows from  \cite{Kwasniewski-Meyer:Essential}*{Theorems 6.5, 6.6 and Corollary 6.7}.
   \end{proof}
  \begin{corollary}
  \label{cor:cartan_detection_supporting}
  Let \(A\subseteq B\) be a noncommutative Cartan subalgebra such
  that~\(A\) is either prime or  contains an essential ideal
  of Type~I.  Then~\(A\) supports~\(B\) and thus~\(A\) detects
  ideals in~\(B\).
\end{corollary}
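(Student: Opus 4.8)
The plan is to derive this directly from the immediately preceding Theorem~\ref{thm:characterisation_of_aperiodic_crossed_products}. By hypothesis, \(A\subseteq B\) is a noncommutative Cartan subalgebra, which is precisely condition~\ref{enu:noncommutative_Cartans333} of that theorem. The additional assumption---that~\(A\) is prime or contains an essential ideal of Type~I---is exactly the hypothesis under which the theorem asserts that conditions \ref{enu:aperiodic_crossed_products1}--\ref{enu:noncommutative_Cartans333} are all equivalent. So the work is already done elsewhere; the corollary is a specialisation of the theorem's conclusion to these hypotheses.

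First I would invoke this equivalence to promote condition~\ref{enu:noncommutative_Cartans333} to conditions \ref{enu:aperiodic_crossed_products1} and~\ref{enu:aperiodic_crossed_products2}: the inclusion \(A\subseteq B\) is regular and aperiodic with a faithful conditional expectation, and \(B\cong A\rtimes_\red S\) for a closed and aperiodic action. In particular the equivalent conditions \ref{enu:aperiodic_crossed_products1} and~\ref{enu:aperiodic_crossed_products2} hold, so the final part of Theorem~\ref{thm:characterisation_of_aperiodic_crossed_products} applies and yields at once that~\(A\) supports~\(B\).

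It remains only to record that supporting is the stronger of the two properties, so that detection of ideals follows automatically. Indeed, suppose \(J\idealin B\) satisfies \(J\cap A=0\) but \(J\neq0\), and choose \(b\in J^+\setminus\{0\}\). Since~\(A\) supports~\(B\), there is \(a\in A^+\setminus\{0\}\) with \(a\precsim b\); then~\(a\) lies in the closed two-sided ideal of~\(B\) generated by~\(b\), which is contained in the ideal~\(J\). Hence \(0\neq a\in J\cap A\), a contradiction, so \(J=0\). There is no genuine obstacle in this argument: all the substance is carried by Theorem~\ref{thm:characterisation_of_aperiodic_crossed_products}, and the only new content here is the elementary observation that supporting implies detecting.
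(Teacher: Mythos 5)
Your proposal is correct and takes essentially the same route as the paper: the corollary is stated without a separate proof precisely because it is the specialisation of Theorem~\ref{thm:characterisation_of_aperiodic_crossed_products} that you describe (the prime/Type~I hypothesis makes condition~\ref{enu:noncommutative_Cartans333} equivalent to \ref{enu:aperiodic_crossed_products1} and~\ref{enu:aperiodic_crossed_products2}, whose conclusion includes supporting and detecting). Your extra verification that supporting implies detecting, via \(a \precsim b\) forcing \(a\) into the closed ideal generated by \(b\), is a correct justification of the word ``thus'' in the statement.
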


Theorem \ref{thm:characterisation_of_aperiodic_crossed_products}
indicates that regular aperiodic inclusions \(A\subseteq B\) with a
faithful conditional expectation \(E\colon B\to A\) form an
important subclass of noncommutative Cartan inclusions.
The example mentioned in the beginning of this section shows that
this subclass is strictly smaller --~unless~\(A\) is prime or contains
an essential ideal of Type~I.  If the action is only purely outer,
then we may recover the dynamics from the inclusion
\(A\subseteq A\rtimes_\red S\) in an essentially unique way; but we
cannot yet relate the ideal structure or the Cuntz semigroups of~\(A\)
and~\(A\rtimes_\red S\).  So some applications will only work for
aperiodic noncommutative Cartan inclusions.

Aperiodic noncommutative Cartan inclusions can often be
characterised by the topological freeness (effectivity) of the dual
groupoid:

\begin{definition}
  \label{def:topol_free_groupoids}
  An étale groupoid \(H\), possibly with non-Hausdorff unit
  space~\(X\), is \emph{effective} if any open bisection
  \(U\subseteq H\) with \(\rg|_U = \s|_U\) is contained in~\(X\).
\end{definition}

\begin{remark}
  In \cite{Kwasniewski-Meyer:Essential}*{Definition~2.19}, an étale
  groupoid~\(H\) is called \emph{topologically free} if there is no
  non-empty open bisection \(U\subseteq H\setminus X\) with
  \(\rg|_U = \s|_U\).  Topological freeness is weaker than
effectivity in general, but the two notions coincide if~\(X\) is
closed in~\(H\) (so for instance when \(H\) is Hausdorff).
\end{remark}

\begin{theorem}
  \label{thm:characterisation_of_aperiodic_crossed_products_via_topological_freeness}
  Let \(A\subseteq B\) be a \(\Cst\)\nb-inclusion with a
  faithful conditional expectation.  Assume that~\(A\) has an
  essential ideal which is separable or of Type~I.  The conditions
  \ref{enu:aperiodic_crossed_products1}
  and~\ref{enu:aperiodic_crossed_products2} in
  Theorem~\textup{\ref{thm:characterisation_of_aperiodic_crossed_products}}
  are equivalent to each of the following:
  \begin{enumerate}
  \item \label{enu:aperiodic_crossed_products3}%
    \(B\cong A\rtimes_\red S\), by an \(A\)\nb-preserving
    isomorphism, for a closed action~\(\Hilm\) of an inverse
    semigroup~\(S\) on~\(A\) whose dual groupoid
    \(\widehat{A}\rtimes S\) is effective;

  \item \label{enu:aperiodic_crossed_products4.5}%
    \(A\subseteq B\) is regular and the dual groupoid
    \(\widehat{A}\rtimes \Slice(A,B)\) is effective and has a closed
    space of units;

  \item \label{enu:aperiodic_crossed_products5}%
    there is a saturated, wide grading \((B_t)_{t\in S}\) on~\(B\)
    with unit fibre~\(A\), whose dual groupoid
    \(\widehat{A}\rtimes S\) is effective and has a closed space of
    units.
  \end{enumerate}
  If these conditions hold, then the dual groupoids
  \(\widehat{A}\rtimes \Slice(A,B)\) and \(\widehat{A}\rtimes S\)
  above are canonically isomorphic to each other.
\end{theorem}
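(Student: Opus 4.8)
The plan is to combine the equivalence \ref{enu:aperiodic_crossed_products1}\(\Leftrightarrow\)\ref{enu:aperiodic_crossed_products2} already furnished by Theorem~\ref{thm:characterisation_of_aperiodic_crossed_products} with the correspondence between aperiodicity of an inverse semigroup action and topological freeness of its dual groupoid. Under the standing hypothesis that~\(A\) has an essential ideal which is separable or of Type~I, an action of~\(S\) on~\(A\) is aperiodic if and only if the dual groupoid \(\widehat{A}\rtimes S\) is topologically free; this is the Kishimoto-type result from~\cite{Kwasniewski-Meyer:Essential}, and it is the only place where the hypothesis on~\(A\) is used. I would also keep in mind two standing translations: by Proposition~\ref{prop:conditional_expectation}, an action is closed exactly when the unit space of its dual groupoid is closed; and by the remark following Definition~\ref{def:topol_free_groupoids}, effectivity implies topological freeness always, and the two coincide once the unit space is closed. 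Granting these, the scheme is to prove \ref{enu:aperiodic_crossed_products2}\(\Leftrightarrow\)\ref{enu:aperiodic_crossed_products3}, then \ref{enu:aperiodic_crossed_products3}\(\Rightarrow\)\ref{enu:aperiodic_crossed_products4.5}\(\Rightarrow\)\ref{enu:aperiodic_crossed_products5}\(\Rightarrow\)\ref{enu:aperiodic_crossed_products2}.

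For \ref{enu:aperiodic_crossed_products2}\(\Leftrightarrow\)\ref{enu:aperiodic_crossed_products3}, both conditions already supply the same datum \(B\cong A\rtimes_\red S\) for a closed action by an \(A\)\nb-preserving isomorphism, so the only thing to match is ``aperiodic'' against ``\(\widehat{A}\rtimes S\) effective''. Since the action is closed, its unit space is closed, and there topological freeness and effectivity agree; the cited correspondence then converts aperiodicity into topological freeness and back, giving the equivalence.

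For \ref{enu:aperiodic_crossed_products3}\(\Rightarrow\)\ref{enu:aperiodic_crossed_products4.5}, I would start from a closed action with \(\widehat{A}\rtimes S\) effective and \(B\cong A\rtimes_\red S\). Effectivity gives topological freeness, hence aperiodicity, hence pure outerness of the action. The action is therefore closed and purely outer, so Theorem~\ref{thm:uniqueness_purely_outer_action} identifies its refinement with the tautological \(\Slice(A,B)\)\nb-action and produces a canonical isomorphism \(\widehat{A}\rtimes\Slice(A,B)\cong\widehat{A}\rtimes S\); the latter is effective with closed unit space, and \(A\subseteq B\) is regular, which is precisely \ref{enu:aperiodic_crossed_products4.5}. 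This same isomorphism settles the final assertion of the theorem. The step \ref{enu:aperiodic_crossed_products4.5}\(\Rightarrow\)\ref{enu:aperiodic_crossed_products5} is immediate, since the tautological \(\Slice(A,B)\)\nb-grading of~\(B\) is saturated by Proposition~\ref{prop:regular_vs_inverse_semigroups} and wide by the example after Definition~\ref{def:admissible_grading}, so it witnesses \ref{enu:aperiodic_crossed_products5} with \(S=\Slice(A,B)\).

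The substantive step is \ref{enu:aperiodic_crossed_products5}\(\Rightarrow\)\ref{enu:aperiodic_crossed_products2}. Given a saturated, wide grading \((B_t)_{t\in S}\) with \(\widehat{A}\rtimes S\) effective and with closed unit space, Proposition~\ref{prop:conditional_expectation} makes the action closed, while effectivity gives topological freeness and hence aperiodicity by the cited correspondence. It remains to identify~\(B\) with \(A\rtimes_\red S\) using the given faithful conditional expectation \(E\colon B\to A\); by Proposition~\ref{pro:recognise_reduced_crossed_S_using_E} it suffices to show that~\(E\) preserves the (wide) grading. Here I would run Kishimoto's argument fibrewise: wideness gives \(I_{1,t}=B_t\cap A\), so the closed-action decomposition~\eqref{eq:decompose_Hilm_closed} reads \(B_t=(B_t\cap A)\oplus B_t\cdot I_{1,t}^\bot\) with \(B_t\cdot I_{1,t}^\bot\) aperiodic, and for \(x\in B_t\cdot I_{1,t}^\bot\) the element \(E(x)^* x\) again lies in this aperiodic bimodule and hence satisfies Kishimoto's condition. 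Since~\(E\) is a contractive \(A\)\nb-bimodule map, \(E(x)^* E(x)=E(E(x)^* x)\) inherits Kishimoto's condition, and by \cite{Kwasniewski-Meyer:Essential}*{Lemma~5.10} the only positive element of~\(A\) with this property is~\(0\); thus \(E(x)=0\). Consequently \(E(B_t)=B_t\cap A\subseteq B_t\), so~\(E\) preserves the grading, and Proposition~\ref{pro:recognise_reduced_crossed_S_using_E} yields \(A\rtimes_\red S\cong B\) with~\(E\) the canonical expectation, which is \ref{enu:aperiodic_crossed_products2}. The main obstacle is the aperiodicity/topological-freeness correspondence invoked throughout: this is exactly the point at which the separable-or-Type-I essential ideal is indispensable, whereas the remaining steps are bookkeeping around the closed/purely outer dichotomy together with the one-line Kishimoto computation above.
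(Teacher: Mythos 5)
Your proposal is correct and follows essentially the same route as the paper's proof: the equivalence of \ref{enu:aperiodic_crossed_products3} with the conditions of Theorem~\ref{thm:characterisation_of_aperiodic_crossed_products} via \cite{Kwasniewski-Meyer:Essential}*{Theorem~6.14}, the implication \ref{enu:aperiodic_crossed_products3}\(\Rightarrow\)\ref{enu:aperiodic_crossed_products4.5} via the uniqueness theorem (Theorem~\ref{thm:uniqueness_purely_outer_action}, which is what Corollary~\ref{cor:dual_groupoid_well-defined} packages), the trivial step \ref{enu:aperiodic_crossed_products4.5}\(\Rightarrow\)\ref{enu:aperiodic_crossed_products5} with \(S=\Slice(A,B)\), and the closing step through Proposition~\ref{pro:recognise_reduced_crossed_S_using_E}. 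The only difference is cosmetic: in \ref{enu:aperiodic_crossed_products5}\(\Rightarrow\)\ref{enu:aperiodic_crossed_products2} the paper obtains \(E\circ\pi=E_0\) by citing the uniqueness of conditional expectations for aperiodic inclusions (Proposition~\ref{prop:aperiodic_implies_uniqueness}), whereas you inline the very same Kishimoto computation that proves that proposition in order to show directly that~\(E\) preserves the grading.
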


\begin{proof}
  By \cite{Kwasniewski-Meyer:Essential}*{Theorem~6.14}, the first
  two conditions in
  Theorem~\ref{thm:characterisation_of_aperiodic_crossed_products}
  are equivalent to~\ref{enu:aperiodic_crossed_products3} in the
  present assertion.  So~\ref{enu:aperiodic_crossed_products3}
  implies that \(A\subseteq B\) is a noncommutative Cartan
  subalgebra.  Together with
  Theorem~\ref{the:nc_Cartan}.\ref{the:nc_Cartan6} and
  Corollary~\ref{cor:dual_groupoid_well-defined}, it follows that
  the dual groupoid in~\ref{enu:aperiodic_crossed_products4.5} is
  canonically isomorphic to the dual groupoid for the
  action~\(\Hilm\).  These groupoids have closed unit spaces by
  Proposition~\ref{prop:conditional_expectation}.
  Thus~\ref{enu:aperiodic_crossed_products3}
  implies~\ref{enu:aperiodic_crossed_products4.5}.
  Condition~\ref{enu:aperiodic_crossed_products4.5}
  implies~\ref{enu:aperiodic_crossed_products5} by taking
  \(S = \Slice(A,B)\).

  Assume~\ref{enu:aperiodic_crossed_products5} and let~\(\Hilm\) be
  the action coming from the grading \((B_t)_{t\in S}\).  By
  definition, \(\widehat{A}\rtimes S\) is the dual groupoid for the
  action \(\Hilm\).  Hence this action is closed and aperiodic (see
  Proposition~\ref{prop:conditional_expectation} and
  \cite{Kwasniewski-Meyer:Essential}*{Theorem~6.14}).  Thus the
  canonical conditional expectation \(E_0\colon A\rtimes S \to A\)
  is the unique conditional expectation from \(A\rtimes S\)
  onto~\(A\) by Proposition~\ref{prop:aperiodic_implies_uniqueness}.
  This implies that the canonical epimorphism
  \(\pi\colon A\rtimes S\to B\) intertwines \(E_0\) and
  \(E\colon B\to A\).  Then
  Proposition~\ref{pro:recognise_reduced_crossed_S_using_E} shows
  that~\(\pi\) descends to an isomorphism \(B\cong A\rtimes_\red S\)
  as in~\ref{enu:aperiodic_crossed_products3}.
\end{proof}

\section{Cartan \texorpdfstring{$\Cst$}{C*}-subalgebras with
Hausdorff primitive ideal space}
\label{sec:Hausdorff_primitive_ideal_space}

In this section, we assume that~\(A\) is a \(\Cst\)\nb-algebra with
Hausdorff primitive ideal space \(X\defeq \Prim(A)=\widecheck{A}\).
We are going to show that a regular inclusion with a conditional
expectation is Cartan if and only if the conditional expectation is
unique, and we shall rewrite the crossed product description in
Theorem~\ref{the:nc_Cartan} through a Fell bundle over a Hausdorff
groupoid.  We also consider the two extreme cases where~\(A\) is
either simple or commutative.

The Dauns--Hofmann isomorphism \(\Contb(X)\cong Z(\Mult(A))\) shows
that~\(A\) is a \(\Cont_0(X)\)-algebra in such a way that
\((fa)(x)=f(x)a(x)\) in \(A(x)\defeq A/\prid_x\) for all \(x\in X\),
\(f\in \Cont_0(X)\) and \(a\in A\).  An ideal~\(I\) in~\(A\)
corresponds to an open subset
\(\widecheck{I}\subseteq \widecheck{A}=X\).

\begin{proposition}
  \label{pro:unique_conditionals_commutative}
  Let \(A\subseteq B\) be a non-degenerate \(\Cst\)\nb-inclusion
  where~\(\widecheck{A}\) is Hausdorff.  If there is a unique
  conditional expectation \(E\colon B\to A\), then there is a unique
  conditional expectation \(I B I \to I\) for each
  \(I\in\Ideals(A)\).  Similar implications hold for unique faithful
  and unique almost faithful conditional expectations, respectively.
\end{proposition}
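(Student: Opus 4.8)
The plan is to show that \emph{every} conditional expectation $P\colon IBI\to I$ coincides with the restriction $E|_{IBI}$; since $E|_{IBI}$ is itself a conditional expectation $IBI\to I$ (it maps into~$I$ because~$E$ is an $A$\nb-bimodule map and $IAI=I$, and it inherits faithfulness, resp.\ almost faithfulness, from~$E$), this settles existence and uniqueness in all three versions at once. The tool that makes this work is the $\Cont_0(X)$\nb-algebra structure on~$A$ coming from the Dauns--Hofmann isomorphism $\Contb(X)\cong Z\Mult(A)$ for the Hausdorff space $X=\widecheck A$: the ideal~$I$ corresponds to an open set $U\subseteq X$, and $\Cont_0(U)\subseteq Z\Mult(A)$ supplies central positive contractions together with an approximate unit $(e_\lambda)$ of~$I$.

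First I would, given~$P$ and a central contraction $g\in\Cont_0(U)$ with $0\le g\le 1$, splice~$P$ and the ambient expectation~$E$ into a single map $F_g\colon B\to A$,
\[
  F_g(b)\defeq P\bigl(g^{1/2}\,b\,g^{1/2}\bigr)+E\bigl((1-g)^{1/2}\,b\,(1-g)^{1/2}\bigr).
\]
Here $g^{1/2}bg^{1/2}\in IBI$, because $g^{1/2}\in\Cont_0(U)$ forces $g^{1/2}B\subseteq\overline{IB}$; hence the first summand lies in $I\subseteq A$ and the second in~$A$, so $F_g$ is genuinely $A$\nb-valued. It is completely positive (each summand is a compression by a positive element followed by a completely positive map), and using the centrality of~$g$ one checks $F_g(a)=ga+(1-g)a=a$ for $a\in A$ and $\bar{F}_g(1)=g+(1-g)=1$, so $F_g$ is an honest conditional expectation $B\to A$. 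Invoking uniqueness of~$E$ gives $F_g=E$. Since~$E$ is $\Mult(A)$\nb-bilinear by Lemma~\ref{lem:multipliers_conditions_expectations} and~$g$ is central, the second summand equals $(1-g)E(b)$, so the equation $F_g=E$ collapses to the pointwise identity $P\bigl(g^{1/2}bg^{1/2}\bigr)=E\bigl(g^{1/2}bg^{1/2}\bigr)$ for every $b\in B$. Taking $g=e_\lambda$ and letting $c\in IBI$ play the role of~$b$, both sides converge in norm to $P(c)$ and $E(c)$, whence $P=E|_{IBI}$.

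To cover the three versions I would only vary the choice of~$g$ so that $F_g$ lands in the required class. For a unique (unrestricted) or a unique \emph{faithful} expectation any approximate unit $g=e_\lambda$ works: in the faithful case $F_{e_\lambda}(b^*b)=0$ forces $b\,e_\lambda^{1/2}=0$ and $b\,(1-e_\lambda)^{1/2}=0$ by faithfulness of~$P$ and~$E$, hence $b=0$, so $F_{e_\lambda}$ is faithful. For a unique \emph{almost faithful} expectation I would instead use $g=\lambda_0 e_\lambda$ with a fixed scalar $\lambda_0<1$, so that $1-g$ is invertible in $\Mult(A)$; then any ideal $J\idealin B$ with $F_g|_J=0$ satisfies $(1-g)E(x)=0$ for $x\in J$, hence $E|_J=0$, hence $J=0$, so~$F_g$ is almost faithful. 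In every case the scalar cancels and the same limiting argument yields $P=E|_{IBI}$. The conceptual crux --- and the place where Hausdorffness is indispensable --- is the construction of~$F_g$: it is precisely the abundance of \emph{central} elements $g\in\Cont_0(U)$, guaranteed by Dauns--Hofmann for Hausdorff~$\widecheck A$, that lets one glue~$P$ and~$E$ into a map which both restricts to the identity on~$A$ and takes values in~$A$ rather than in a larger multiplier algebra, thereby sidestepping any delicate continuity question at the boundary $\partial U$.
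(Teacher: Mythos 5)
Your proof is correct and is essentially the paper's own argument: your glued map \(F_g\) coincides with the paper's expectation \(\tilde{E}(b) = P(f b f) + (1-f^2)\cdot E(b)\) upon taking \(f = g^{1/2}\) and using centrality to rewrite \(E\bigl((1-g)^{1/2} b (1-g)^{1/2}\bigr) = (1-g)E(b)\), and both proofs hinge on the same point, namely that the Dauns--Hofmann isomorphism supplies central positive functions in \(\Cont_0(\widecheck{I})\subseteq Z\Mult(A)\) with which one can splice \(P\) and \(E\) into a conditional expectation \(B\to A\) and then invoke uniqueness upstairs. The only difference is in how the conclusion is extracted: the paper argues contrapositively, fixing a single \(f\) that is non-zero at a point of \(\widecheck{I}\) where \(P\) and \(E|_{IBI}\) differ and detecting the discrepancy there, whereas you run \(g\) through an approximate unit of \(I\) and pass to the norm limit to obtain \(P = E|_{IBI}\) directly (being, in fact, slightly more careful than the paper in the almost faithful case by arranging \(1-g\) to be invertible).
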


\begin{proof}
  Suppose that there are an ideal \(I\in\Ideals(A)\) and a
  conditional expectation \(P\colon I B I \to I\) that differs from
  the restriction of~\(E\).  Then there is \(b_0\in I B I\) with
  \(P(b_0) \neq E(b_0)\).  Then there is
  \(x\in \widecheck{I} \subseteq \widecheck{A}\) with
  \(P(b_0)(x) \neq E(b_0)(x)\).  Pick a function
  \(f\in\Cont_0(\widecheck{I})\) with \(f(x)\neq0\) and
  \(0\le f \le 1\).  If \(b\in B=ABA\), then
  \(f\cdot b\cdot f \in I B I\), so that \(P(f b f)\) is defined.
  We define
  \[
    \tilde{E} \colon B\to A,\qquad
    b \mapsto P(f b f) + (1-f^2)\cdot E(b).
  \]
  Since \(P\) and~\(E\) are completely positive, \(0 \le f \le 1\),
  and~\(f\in \Cont_0(\widecheck{I})\subseteq \Cont_0(X)\subseteq
  Z(\Mult(A))\), the map~\(\tilde{E}\) is positive and
  \(\tilde{E}(a) = f a f + (1-f^2) a = a\) for all \(a\in A\).  It
  follows that \(\norm{\tilde{E}}=1\) and so~\(\tilde{E}\) is a
  conditional expectation \(B\to A\).  And
  \[
    \tilde{E}(b_0)(x) - E(b_0)(x)
    = f^2(x) P(b_0)(x) -f^2(x) E(b_0)(x) \neq 0
  \]
  by construction.  So the conditional expectation~\(E\) is not
  unique.  If, in addition, \(E\) and~\(P\) are (almost) faithful,
  then so is~\(\tilde{E}\).
\end{proof}

\begin{theorem}
  \label{the:Cartan_Hausdorff}
  Let \(A\subseteq B\) be a regular \(\Cst\)\nb-subalgebra with a
  faithful conditional expectation \(E\colon B\to A\).
  Assume~\(\widecheck{A}\) to be Hausdorff.  The conditions in
  Theorem~\textup{\ref{the:nc_Cartan}}, which characterise Cartan
  subalgebras in the sense of Exel, are equivalent to the following
  conditions:
  \begin{enumerate}%[wide,label=\textup{(\ref*{the:Cartan}.\arabic*)}]
  \item \label{the:Cartan_Hausdorff1}%
    there is no other conditional expectation \(B \to A\)
    besides~\(E\);
  \item \label{the:Cartan_Hausdorff2}%
    there is no other faithful conditional expectation
    \(B \to A\) besides~\(E\).
  \end{enumerate}
  If these conditions hold, then there is a continuous Fell bundle
  \(\A=(A_\gamma)_{\gamma\in H}\) over a Hausdorff, étale, locally
  compact groupoid~\(H\) with unit space~\(\widecheck{A}\) such that
  \(B\cong \Cst_\red(H,\A)\) with an isomorphism restricting to the
  canonical isomorphism \(A \cong\Cont_0(\widecheck{A}, \A)\).  This
  Fell bundle is unique up to isomorphism, and~\(H\) is isomorphic
  to the dual groupoid \(\widecheck{A}\rtimes S\) for any saturated,
  wide grading \((B_t)_{t\in S}\) on~\(B\) with unit fibre~\(A\).
\end{theorem}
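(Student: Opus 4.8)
The plan is to first establish the two extra characterisations~\ref{the:Cartan_Hausdorff1} and~\ref{the:Cartan_Hausdorff2}, and then to realise~\(B\) as the reduced section algebra of a Fell bundle. Since \(A\subseteq B\) is regular and~\(E\) is faithful, hence almost faithful, Theorem~\ref{the:nc_Cartan} applies. If its conditions hold, its final clause gives that~\(E\) is the \emph{only} conditional expectation onto~\(A\), which is~\ref{the:Cartan_Hausdorff1}; and \ref{the:Cartan_Hausdorff1} trivially implies~\ref{the:Cartan_Hausdorff2}. For the converse I would assume~\ref{the:Cartan_Hausdorff2}: since~\(\widecheck{A}\) is Hausdorff and~\(E\) is the unique faithful expectation \(B\to A\), the faithful version of Proposition~\ref{pro:unique_conditionals_commutative} yields a unique faithful conditional expectation \(I B I\to I\) for every \(I\in\Ideals(A)\) (existence being witnessed by the faithful map~\(E|_{I B I}\)). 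This is exactly condition~\ref{the:nc_Cartan1} of Theorem~\ref{the:nc_Cartan}, so all its conditions hold. Thus \ref{the:Cartan_Hausdorff1} and~\ref{the:Cartan_Hausdorff2} are each equivalent to being a noncommutative Cartan inclusion.

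Now assume these conditions and fix a saturated, wide grading \((B_t)_{t\in S}\) with \(A=B_1\) (for instance the tautological grading over \(\Slice(A,B)\)). By Theorem~\ref{the:nc_Cartan}.\ref{the:nc_Cartan6} the associated action~\(\Hilm\) is closed and purely outer, with \(B\cong A\rtimes_\red S\). Set \(H\defeq \widecheck{A}\rtimes S\); its unit space is the locally compact Hausdorff space~\(\widecheck{A}\). Since the action is closed, Proposition~\ref{prop:conditional_expectation} shows that~\(\widecheck{A}\) is \emph{closed} in~\(H\). I would then prove the general fact that an étale groupoid with locally compact Hausdorff unit space whose unit space is closed is itself Hausdorff: given distinct arrows \(\gamma_1,\gamma_2\) with equal source and range, pick bisections \(U_1\ni\gamma_1\), \(U_2\ni\gamma_2\) with common open source~\(W\); writing \(\gamma_i(z)\) for the point of~\(U_i\) over \(z\in W\), the agreement locus \(\setgiven{z\in W}{\gamma_1(z)\gamma_2(z)^{-1}\in\widecheck{A}}\) is the preimage of the closed set~\(\widecheck{A}\), hence closed, and it excludes the common source; restricting \(U_1,U_2\) to its complement produces disjoint neighbourhoods. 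Thus~\(H\) is a Hausdorff, étale, locally compact groupoid with unit space~\(\widecheck{A}\).

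Next I would build the Fell bundle. As recalled at the start of this section, \(\widecheck{A}\) Hausdorff makes~\(A\) a \(\Cont_0(\widecheck{A})\)-algebra with fibres \(A(\prid)=A/\prid\), and Hausdorffness of~\(\widecheck{A}\) upgrades this to a \emph{continuous} field \((A(\prid))_{\prid\in\widecheck{A}}\). For an arrow \(\gamma=[t,\prid]\in H\) I define the fibre \(A_\gamma\defeq \Hilm_t/\overline{\Hilm_t\cdot \prid}\), the localisation of the imprimitivity bimodule~\(\Hilm_t\); this is an \(A(\rg(\gamma))\)-\(A(\s(\gamma))\)-imprimitivity bimodule, and it is independent of the representative~\(t\) because the inclusions \(\Hilm_v\hookrightarrow\Hilm_t\) for \(v\le t\) localise compatibly. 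Topologising \(\A=\bigsqcup_\gamma A_\gamma\) through the sections coming from the~\(\Hilm_t\) gives a saturated Fell bundle over~\(H\), which is continuous precisely because~\(A\) is a continuous field. The image of~\(S\) in \(\Bis(H)\) is wide, since \(\bigcup_t U_t=H\) and \(U_t\cap U_u=\bigcup_{v\le t,u}U_v\), and the given \(\Hilm\) is recovered by taking \(\Cont_0\)-sections of~\(\A\) over the bisections~\(U_t\). Hence the equivalence of Example~\ref{ex:Fell_bundles_over_groupoids} yields \(A\rtimes S\cong \Cst(H,\A)\) and \(A\rtimes_\red S\cong \Cst_\red(H,\A)\), compatibly with \(A\cong\Cont_0(\widecheck{A},\A)\); composing with \(B\cong A\rtimes_\red S\) gives \(B\cong\Cst_\red(H,\A)\).

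Finally, for the uniqueness and the "for any grading" assertion, I would note that the construction depends only on the grading through its dual groupoid and refined action. Given two saturated, wide gradings over \(S_1\) and~\(S_2\) with unit fibre~\(A\), Corollary~\ref{cor:dual_groupoid_well-defined} gives a canonical isomorphism \(\widecheck{A}\rtimes S_1\cong\widecheck{A}\rtimes S_2\), so~\(H\) is well defined up to isomorphism; and Theorem~\ref{thm:uniqueness_purely_outer_action} identifies the refined actions, which under the localisation description above match the Fell bundles~\(\A\) fibrewise. This simultaneously proves that~\(\A\) is unique up to isomorphism and that \(H\cong\widecheck{A}\rtimes S\) for every such grading. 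The hard part is the third paragraph: verifying that the localised fibres glue to a genuinely continuous, saturated Fell bundle over the (in general non-Hausdorff) transformation groupoid and that this recovers~\(\Hilm\) on the bisections~\(U_t\). The Hausdorffness of~\(H\) and the continuity of the field~\(A\) — both forced by \(\widecheck{A}\) Hausdorff together with closedness of the action — are exactly what make the groupoid/inverse-semigroup correspondence of Example~\ref{ex:Fell_bundles_over_groupoids} applicable here.
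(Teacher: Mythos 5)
Your proposal is correct and shares the paper's skeleton: both use Proposition~\ref{pro:unique_conditionals_commutative} to pass between conditions \ref{the:Cartan_Hausdorff1}, \ref{the:Cartan_Hausdorff2} and conditions \ref{the:nc_Cartan0}, \ref{the:nc_Cartan1} of Theorem~\ref{the:nc_Cartan} (your cycle through the final clause of that theorem is equally valid), and both obtain the groupoid as the dual groupoid \(\widecheck{A}\rtimes S\cong \widecheck{A}\rtimes\Slice(A,B)\) with uniqueness ultimately resting on Theorem~\ref{thm:uniqueness_purely_outer_action} and Corollary~\ref{cor:dual_groupoid_well-defined}. The genuine divergence is the central step. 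The paper never constructs the Fell bundle: it applies \cite{Buss-Meyer:Actions_groupoids}*{Theorem~6.1} to the tautological \(\Slice(A,B)\)-grading, which yields existence \emph{and} uniqueness of \(\A\) in one stroke, with continuity supplied by \cite{Kwasniewski-Meyer:Essential}*{Remark~7.17}, and with Hausdorffness of~\(H\) left tacit. You instead prove Hausdorffness directly from closedness of the unit space (a correct argument, and a welcome explicit ingredient that the paper omits), and you rebuild the bundle by hand via localisation of the~\(\Hilm_t\) at primitive ideals. Your route buys transparency: it shows exactly where Hausdorffness of~\(\widecheck{A}\) enters --- points of \(\widecheck{A}\) are closed, so each fibre \(A/\prid\) is simple, and this simplicity is what makes \(A_{[t,\prid]}\) independent of the representative~\(t\) (the inclusions \(\Hilm_v\hookrightarrow\Hilm_t\) localise to isomorphisms only because the localised source ideals are non-zero ideals of simple algebras), while continuity of the field~\(A\) over \(\Prim(A)\) gives continuity of~\(\A\). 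What it costs is that your third paragraph is, as you acknowledge, a sketch of a substantial citable theorem: the Fell-bundle axioms and topology on \(\bigsqcup_\gamma A_\gamma\), the identification \(A_{U_t}=\Hilm_t\) together with \(A\rtimes_\red S\cong A\rtimes_\red S'\) for the image~\(S'\) of~\(S\) in \(\Bis(H)\), and, for the uniqueness assertion, the fact that an arbitrary competitor \((H',\A')\) is recovered from its own section grading (your ``fibrewise matching'') all remain to be verified. These verifications are precisely the content of the cited Buss--Meyer correspondence, so your proof becomes complete either by filling them in or by substituting that citation, as the paper does.
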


\begin{proof}
  Conditions \ref{the:nc_Cartan0} and~\ref{the:nc_Cartan1} in
  Theorem~\ref{the:nc_Cartan} are equivalent to
  \ref{the:Cartan_Hausdorff1} and~\ref{the:Cartan_Hausdorff2} in
  this theorem by
  Proposition~\ref{pro:unique_conditionals_commutative}.  Assume
  these conditions.  By Theorem~\ref{the:nc_Cartan}, for any
  saturated, wide grading \((B_t)_{t\in S}\) on~\(B\) with unit
  fibre~\(A\), the action~\((B_t)_{t\in S}\) of~\(S\) on~\(A\) is
  closed and purely outer, and there is a canonical isomorphism
  \(A\rtimes_\red S\cong B\).
  Theorem~\ref{thm:uniqueness_purely_outer_action} shows that the
  dual groupoid \(\widecheck{A}\rtimes S\) is canonically isomorphic
  to the dual groupoid \(H=\widecheck{A}\rtimes \Slice(A,B)\), and
  \(\Bis(H)\cong \Slice(A,B)\) as inverse semigroups.  Hence
  \cite{Buss-Meyer:Actions_groupoids}*{Theorem~6.1} applied to the
  tautological \(\Slice(A,B)\)-grading on~\(B\) gives both existence
  and uniqueness of the desired Fell bundle
  \(\A=(A_\gamma)_{\gamma\in H}\).  The Fell bundle is a continuous
  field of Banach spaces because~\(A\) is a continuous field of
  \(\Cst\)\nb-algebras over~\(\widecheck{A}\) (see
  \cite{Kwasniewski-Meyer:Essential}*{Remark~7.17}).
\end{proof}

\subsection{Simple Cartan subalgebras}
\label{sec:Cartan_simple}

Assume~\(A\) to be simple.  The assumptions in
Theorem~\ref{the:nc_Cartan} involving ideals become empty for
\(I=\{0\}\), so that only the case \(I=A\) remains.  Any non-zero
slice \(M\subseteq B\) satisfies \(M^* M = A = M M^*\).  Hence
\(\Slice(A,B)\setminus\{0\}\) is a group, and any grading on~\(B\)
by an inverse semigroup~\(S\) with unit fibre~\(A\) may be
simplified to a saturated grading by a group by taking the image
of~\(S\) in \(\Slice(A,B)\) and discarding~\(0\).  Pure outerness
simplifies to outerness:

\begin{definition}[\cite{Kwasniewski-Meyer:Aperiodicity}]
  A Hilbert \(A\)\nb-bimodule~\(\Hilm[H]\) is \emph{outer} if it is
  not isomorphic to~\(A\) as a Hilbert bimodule.  A saturated Fell
  bundle \((B_t)_{t\in G}\) over a group~\(G\) with unit fibre~\(A\)
  is \emph{outer} if, for each \(t\in G\setminus\{1\}\), the Hilbert
  \(A\)\nb-bimodule~\(B_t\) is outer.
\end{definition}

\begin{corollary}
  \label{cor:simple_nc_Cartan}
  Let \(A\subseteq B\) be a regular \(\Cst\)\nb-subalgebra with a
  faithful conditional expectation \(E\colon B\to A\).
  Let~\(A\) be simple.  The following are equivalent:
  \begin{enumerate}%[wide,label=\textup{(\ref*{the:nc_Cartan}.\arabic*)}]
  \item \label{cor:nc_Cartan0}%
    \(A\subseteq B\) is a noncommutative Cartan subalgebra;
  \item \label{cor:nc_Cartan0.5}%
    \(A\subseteq B\) is an aperiodic inclusion;
  \item \label{cor:nc_Cartan1b}%
    there is at most one conditional expectation \(B\to A\);
  \item \label{cor:nc_Cartan1}%
    \(E\colon B\to A\) is the only faithful conditional
    expectation onto~\(A\);
  \item \label{cor:nc_Cartan2}%
    \(A' \cap \Mult(B) = \C\cdot 1\);
  \item \label{cor:nc_Cartan3}%
    any \(A\)-bimodule map \(\varphi\colon A\to B\) has range
    in~\(A\);
  \item \label{cor:nc_Cartan4}%
    if a slice \(M\subseteq B\) is isomorphic to~\(A\), then
    \(M=A\);
  \item \label{cor:nc_Cartan6}%
    for any saturated group grading \((B_t)_{t\in G}\) of~\(B\) with
    unit fibre~\(A\), the Fell bundle \(\B=(B_t)_{t\in G}\) is outer
    and saturated, and the canonical \Star{}homomorphism
    \(\pi\colon \Cst(\B)\to B\) descends to an isomorphism
    \(\Cst_\red(\B)\congto B\);
  \item \label{cor:nc_Cartan6b}%
    there are a discrete group~\(G\), a saturated outer Fell bundle
    \(\B=(B_t)_{t\in G}\) over~\(G\), and a \Star{}isomorphism
    \(\Cst_\red(\B)\congto B\) that maps~\(B_1\) isomorphically
    onto~\(A\).
  \end{enumerate}
  If the above conditions hold, then~\(B\) is simple and the
  group~\(G\) and the Fell bundle in~\ref{cor:nc_Cartan6b} are
  unique up to isomorphism.
\end{corollary}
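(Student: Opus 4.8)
The plan is to derive the whole corollary by specialising Theorem~\ref{the:nc_Cartan}, Theorem~\ref{thm:characterisation_of_aperiodic_crossed_products} and Theorem~\ref{thm:uniqueness_purely_outer_action} to the case where the only ideals of~\(A\) are~\(0\) and~\(A\). First I would record the two simplifications that make this work. Since \(A\) is simple, \(\widecheck{A}\) is a single point, so \(Z\Mult(A)=\Contb(\widecheck{A})=\C\cdot 1\); and a virtual commutant, being an \(A\)\nobreakdash-bimodule map out of an ideal, is either trivial (out of~\(0\)) or a map \(A\to B\) as in~\ref{cor:nc_Cartan3}. With these in hand, every clause ``for all \(I\in\Ideals(A)\)'' in Theorem~\ref{the:nc_Cartan} contributes only \(I=A\), where \(\overline{A B A}=B\) by non-degeneracy. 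Thus \ref{the:nc_Cartan2} reads as \ref{cor:nc_Cartan0}\(=\)\ref{cor:nc_Cartan3}, condition~\ref{the:nc_Cartan3} becomes \(A'\cap\Mult(B)=Z\Mult(A)=\C\cdot1\), i.e.~\ref{cor:nc_Cartan2}, and \ref{the:nc_Cartan4} becomes \ref{cor:nc_Cartan4}; so these four are equivalent to the Cartan property.

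Next I would settle the expectation conditions. The last sentence of Theorem~\ref{the:nc_Cartan} asserts that under the Cartan property~\(E\) is the \emph{only} conditional expectation onto~\(A\), giving \ref{cor:nc_Cartan0}\(\Rightarrow\)\ref{cor:nc_Cartan1b}. Trivially \ref{cor:nc_Cartan1b}\(\Rightarrow\)\ref{cor:nc_Cartan1}, since the given~\(E\) is faithful and hence is the unique faithful expectation whenever there is at most one expectation at all; and \ref{cor:nc_Cartan1} is precisely the specialisation of~\ref{the:nc_Cartan1} (only \(I=A\) survives), which feeds back into Theorem~\ref{the:nc_Cartan} and closes the loop. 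For the aperiodicity clause~\ref{cor:nc_Cartan0.5}, a simple algebra is prime, so Theorem~\ref{thm:characterisation_of_aperiodic_crossed_products} makes its conditions \ref{enu:aperiodic_crossed_products1} and~\ref{enu:noncommutative_Cartans333} equivalent; under the standing hypotheses (regularity and a faithful~\(E\)), clause~\ref{enu:aperiodic_crossed_products1} says exactly that \(A\subseteq B\) is aperiodic.

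The two group-grading conditions require the observation that for a grading by a group~\(G\) the partial order is trivial, so \(I_{1,t}=0\) and \(I_{1,t}^\bot=A\) for \(t\neq1\) (and \(I_{1,1}=A\)); hence every such action is automatically closed, \(\Hilm_t\cdot I_{1,t}^\bot=B_t\), and pure outerness reduces to outerness of each~\(B_t\) with \(t\neq1\). As noted before the statement, any inverse semigroup grading with simple unit fibre collapses to a saturated group grading by passing to the image in the group \(\Slice(A,B)\setminus\{0\}\). Feeding this into Theorem~\ref{the:nc_Cartan}.\ref{the:nc_Cartan6} yields~\ref{cor:nc_Cartan6} and into~\ref{the:nc_Cartan7} yields~\ref{cor:nc_Cartan6b}, using that \(A\rtimes_\red G\cong\Cst_\red(\B)\) (and likewise for the full versions) for a saturated Fell bundle~\(\B\) over a group.

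Finally, for the closing assertions I would argue that~\(B\) is simple because, by Theorem~\ref{thm:characterisation_of_aperiodic_crossed_products}, \(B\) is simple iff the action is minimal, and simplicity of~\(A\) leaves no non-trivial invariant ideals. For uniqueness of~\(G\) and~\(\B\) I would apply Theorem~\ref{thm:uniqueness_purely_outer_action}: the refined action is the tautological action of \(\Slice(A,B)\), and since \(\widecheck{A}\) is a point the dual groupoid \(\widecheck{A}\rtimes G\) is the group~\(G\) itself, so \(\tilde{S}=\Bis(\widecheck{A}\rtimes G)\cong G\sqcup\{0\}\) is identified with \(\Slice(A,B)\). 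Hence \(G\cong\Slice(A,B)\setminus\{0\}\) and~\(\B\) is the tautological bundle of slices, both manifestly determined by \(A\subseteq B\). The one step needing genuine care is this last identification of the dual groupoid of a group action on a one-point space and the resulting canonical form of \(\Slice(A,B)\); everything else is bookkeeping layered on top of the cited theorems.
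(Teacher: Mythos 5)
Your proposal is correct and takes essentially the same route as the paper: it specialises Theorem~\ref{the:nc_Cartan} using that the only ideals of~\(A\) are \(0\) and~\(A\) (so that virtual commutants become \(A\)\nb-bimodule maps \(A\to B\), \(Z\Mult(A)=\C\cdot 1\), \(\Slice(A,B)\setminus\{0\}\) is a group, group gradings are automatically closed, and pure outerness becomes outerness), invokes Theorem~\ref{thm:characterisation_of_aperiodic_crossed_products} for the aperiodicity clause and for simplicity of~\(B\) because simple algebras are prime, and obtains uniqueness of \(G\) and~\(\B\) from the refinement machinery of Section~\ref{sec:uniqueness_Cartan}. The only cosmetic difference is that the paper cites Corollary~\ref{cor:dual_groupoid_well-defined} (or Theorem~\ref{the:Cartan_Hausdorff}) for the uniqueness statement, whereas you unwind Theorem~\ref{thm:uniqueness_purely_outer_action} directly via the identification \(\Bis(\widecheck{A}\rtimes G)\cong G\sqcup\{0\}\cong\Slice(A,B)\); since that corollary is precisely this consequence of Theorem~\ref{thm:uniqueness_purely_outer_action}, the two arguments coincide.
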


\begin{proof}
  Each of the conditions in the assertion,
  except~\ref{cor:nc_Cartan0.5}, is equivalent to one of the
  conditions in Theorem~\ref{the:nc_Cartan}.
  Theorem~\ref{thm:characterisation_of_aperiodic_crossed_products}
  implies that \ref{cor:nc_Cartan0} and~\ref{cor:nc_Cartan0.5} are
  equivalent because simple algebras are prime.  The last part of
  the assertion follows from
  Theorem~\ref{thm:characterisation_of_aperiodic_crossed_products}
  as well as Corollary \ref{cor:dual_groupoid_well-defined} or
  Theorem \ref{the:Cartan_Hausdorff}.
\end{proof}

\begin{remark}
  The above corollary implies that, for any purely outer action of a
  discrete group \(G\) on a simple \(\Cst\)\nb-algebra~\(A\), the
  inclusion \(A\subseteq A\rtimes_\red G\) remembers the group and
  the crossed product \(A\rtimes_\red G\) is simple (simplicity is a
  classical result of Kishimoto~\cite{Kishimoto:Outer_crossed}).
  Both claims fail without outerness assumption (see
  Example~\ref{exm:different_dual_groupoids}).  Outerness is only
  sufficent for simplicity.  For instance, the inclusion
  \(\C\subseteq \C\rtimes_\red\mathbb{F}_n=
  \Cst_\red(\mathbb{F}_n)\) for the free group~\(\mathbb{F}_n\) on
  \(n \geq 2\) generators is not Cartan although
  \(\Cst_\red(\mathbb{F}_n)\) is simple.
\end{remark}

When \(B\defeq A\rtimes_\red G\) is an ordinary reduced crossed
product by a group action \(\alpha\colon G\to \Aut(A)\) and~\(A\) is
unital, then Zarikian showed in
\cite{Zarikian:Unique_expectations}*{Theorem~3.2} that
\ref{cor:nc_Cartan1b} and~\ref{cor:nc_Cartan1} in
Corollary~\ref{cor:simple_nc_Cartan} are equivalent to each other
and to~\(\alpha\) \emph{acting freely}, that is, if \(t\in
G\setminus \{1\}\) and \(d\in A\) are such that \(da=\alpha_t(a)d\)
for all \(a\in A\), then \(d=0\).  The case of a crossed product for
an outer group action is special because each slice is contained in
a global slice defined on all of~\(A\).  Hence we do not expect such
equivalences for inverse semigroup actions on non-simple
\(\Cst\)\nb-algebras.

\subsection{Commutative Cartan subalgebras}
\label{sec:Cartan_commutative}

Now let \(A\subseteq B\) be a \(\Cst\)\nb-inclusion where~\(A\) is
commutative.  Renault defined (commutative) Cartan subalgebras as
regular \(\Cst\)\nb-inclusions \(A\subseteq B\) where~\(A\) is a
maximal Abelian subalgebra of~\(B\) and there is a faithful
conditional expectation onto~\(A\) (see
\cite{Renault:Cartan.Subalgebras}*{Definition~5.1}).  He shows that
Cartan subalgebras \(A\subseteq B\) with separable~\(B\) are
equivalent to twists of topologically principal, Hausdorff, étale,
locally compact, second countable groupoids (see
\cite{Renault:Cartan.Subalgebras}*{Theorems 5.2 and~5.9}).  We now
use Theorem~\ref{the:nc_Cartan} to extend Renault's characterisation
to the non-separable case.  Then topologically principal, second
countable groupoids are replaced by effective groupoids.

\begin{corollary}
  \label{cor:Cartan}
  Let \(A\subseteq B\) be a regular \(\Cst\)\nb-subalgebra with a
  faithful conditional expectation \(E\colon B\to A\).
  Assume~\(A\) to be commutative.  The conditions in Theorems
  \textup{\ref{the:nc_Cartan}}
  and~\textup{\ref{the:Cartan_Hausdorff}}, which characterise Cartan
  subalgebras in the sense of Exel, are equivalent to the following
  conditions:
  \begin{enumerate}%[wide,label=\textup{(\ref*{the:Cartan}.\arabic*)}]
  \item \label{cor:Cartan1}%
    the inclusion \(A\subseteq B\) is aperiodic;
  \item \label{cor:Cartan2}%
    \(A\) is a maximal Abelian subalgebra in~\(B\), that is,
    \(A'\subseteq A\);
  \item \label{cor:Cartan3}%
    \(A\) is a MASA, that is, a maximal Abelian \Star{}subalgebra
    in~\(B\);
  \item \label{cor:Cartan6}%
    there is a twist~\(\Sigma\) of an effective, Hausdorff, étale,
    locally compact groupoid~\(H\) such that
    \(B\cong \Cst_\red(H,\Sigma)\) with an isomorphism mapping~\(A\)
    onto~\(\Cont_0(H^0)\).
  \end{enumerate}
  The twisted groupoid \((H,\Sigma)\) in~\ref{cor:Cartan6} is unique
  up to isomorphism.
\end{corollary}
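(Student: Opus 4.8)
The plan is to prove Corollary~\ref{cor:Cartan} by reducing everything to the already-established Theorems~\ref{the:nc_Cartan} and~\ref{the:Cartan_Hausdorff}, using the fact that for commutative~\(A\) the primitive ideal space \(\widecheck{A}\cong X\) is automatically Hausdorff. Let me sketch the structure of the argument.

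=== PROOF SKETCH ===

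First I would observe that since~\(A\) is commutative, \(\widecheck{A}\) is Hausdorff, so all the hypotheses of Theorem~\ref{the:Cartan_Hausdorff} are in force. Thus the conditions in Theorem~\ref{the:nc_Cartan} are already known to be equivalent to the uniqueness of the (faithful) conditional expectation and to the existence of a Fell bundle description \(B\cong\Cst_\red(H,\A)\) over a Hausdorff étale groupoid~\(H\) with unit space \(\widecheck{A}\cong X\). My task is to link these with the four new conditions.

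The plan is to establish the equivalences in a convenient cycle. For~\ref{cor:Cartan2}\(\iff\)\ref{cor:Cartan3}: the key point is Lemma~\ref{lem:centre_mult}, which shows that an element commuting with the commutative algebra~\(A\) automatically commutes with \(\Mult(A)\); combined with the observation that~\(A\) is commutative, being maximal among abelian subalgebras containing~\(A\) (condition \ref{cor:Cartan3}) is the same as \(A'\cap B\subseteq A\) (condition \ref{cor:Cartan2}), since any self-adjoint element commuting with~\(A\) generates an abelian algebra together with~\(A\). Next, for the link between the relative-commutant condition and~\ref{the:nc_Cartan3} of Theorem~\ref{the:nc_Cartan}: when~\(A\) is commutative one has \(Z\Mult(I)=\Mult(I)\), so condition~\ref{the:nc_Cartan3} specialised to \(I=A\) reads \(A'\cap\Mult(B)=\Mult(A)\), which is exactly \(A\) being maximal abelian at the level of multipliers. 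The passage between \(A'\cap B\) and \(A'\cap\Mult(B)\) is handled by Lemma~\ref{lem:multipliers_conditions_expectations} together with non-degeneracy, and one checks the ideal-by-ideal conditions for general~\(I\) follow from the \(I=A\) case because for commutative~\(A\) every ideal is complemented locally over the Hausdorff space~\(X\) (this is where I would invoke Proposition~\ref{pro:unique_conditionals_commutative} to reduce the family of ideal conditions to the single inclusion \(A\subseteq B\)).

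For~\ref{cor:Cartan1} (aperiodicity): by Theorem~\ref{thm:characterisation_of_aperiodic_crossed_products}, a regular inclusion with faithful conditional expectation is aperiodic if and only if it is a crossed product by a closed aperiodic action; and that theorem states the equivalence with being a noncommutative Cartan subalgebra holds whenever~\(A\) contains an essential ideal of Type~I. A commutative \(\Cst\)-algebra is of Type~I and is its own essential ideal, so aperiodicity is equivalent to the Cartan conditions in this case. Finally, for~\ref{cor:Cartan6}: I would take the Fell bundle \(\A=(A_\gamma)_{\gamma\in H}\) produced by Theorem~\ref{the:Cartan_Hausdorff} and note that, because~\(A\cong\Cont_0(X)\) is commutative with one-dimensional fibres \(A(x)=\C\), each fibre~\(A_\gamma\) is one-dimensional, so~\(\A\) is a Fell line bundle, i.e.\ a twist~\(\Sigma\) over~\(H\) (this is the semi-Abelian situation of Example~\ref{ex:fell_bundles_over_groupoids3}). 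The effectivity of~\(H=\widecheck{A}\rtimes\Slice(A,B)\) follows from Theorem~\ref{thm:characterisation_of_aperiodic_crossed_products_via_topological_freeness} (whose Type~I hypothesis is satisfied), together with the closedness of the unit space coming from the action being closed via Proposition~\ref{prop:conditional_expectation}. Uniqueness of \((H,\Sigma)\) is inherited from the uniqueness of the Fell bundle in Theorem~\ref{the:Cartan_Hausdorff} and of the dual groupoid in Corollary~\ref{cor:dual_groupoid_well-defined}.

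The step I expect to be the main obstacle is verifying that the maximal-abelian condition~\ref{cor:Cartan2}, stated at the level of~\(B\) itself rather than \(\Mult(B)\), matches condition~\ref{the:nc_Cartan3} of Theorem~\ref{the:nc_Cartan}, which is phrased through relative commutants in multiplier algebras \(\Mult(IBI)\). Passing cleanly between \(A'\cap B\subseteq A\) and the multiplier-level statement \(A'\cap\Mult(IBI)=Z\Mult(I)=\Mult(I)\) for all ideals~\(I\) requires care with non-degeneracy and with reducing the family of ideal conditions to the single condition for \(I=A\); this is exactly where the Hausdorffness of~\(X\) (via Proposition~\ref{pro:unique_conditionals_commutative} and the Dauns--Hofmann machinery) does the essential work, and I would present this reduction explicitly rather than treating it as routine.
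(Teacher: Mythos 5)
Your handling of conditions \ref{cor:Cartan1} and~\ref{cor:Cartan6} matches the paper: aperiodicity goes through Theorem~\ref{thm:characterisation_of_aperiodic_crossed_products} because a commutative algebra is its own essential Type~I ideal, and the twist in~\ref{cor:Cartan6} comes from the Fell bundle of Theorem~\ref{the:Cartan_Hausdorff} having one-dimensional fibres, with effectivity and its converse supplied by Theorem~\ref{thm:characterisation_of_aperiodic_crossed_products_via_topological_freeness}. Your route to \ref{cor:Cartan2}\(\iff\)\ref{cor:Cartan3} is correct and actually \emph{simpler} than the paper's: the paper shows that every element of \(A'\) is normal via a polar-decomposition argument in a faithful representation, whereas your observation that self-adjoint elements of \(A'\) generate abelian \(\Cst\)\nb-algebras with~\(A\) suffices, \emph{provided} you add the (unstated, but standard) final step of splitting a general \(x\in A'\) as \(x=\tfrac{1}{2}(x+x^*)+i\,\tfrac{1}{2i}(x-x^*)\), both parts lying in \(A'\) because \(A\) is \Star{}closed. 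Two of your citations are misdirected, though: Lemma~\ref{lem:centre_mult} plays no role in \ref{cor:Cartan2}\(\iff\)\ref{cor:Cartan3}, and the passage between \(A'\cap B\) and \(A'\cap\Mult(B)\) is not Lemma~\ref{lem:multipliers_conditions_expectations} (which is about expectations) but an elementary approximate-unit argument showing \(\Mult(A)\cap B=A\) for non-degenerate inclusions.

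The genuine gap is exactly at the step you flag as the main obstacle: getting from the maximal-abelian condition~\ref{cor:Cartan2} to condition~\ref{the:nc_Cartan3} of Theorem~\ref{the:nc_Cartan}, which requires \(I'\cap\Mult(IBI)=Z\Mult(I)\) for \emph{all} \(I\in\Ideals(A)\), not just \(I=A\). Your proposed reduction via Proposition~\ref{pro:unique_conditionals_commutative} is a category error: that proposition propagates uniqueness of \emph{conditional expectations} from \(A\subseteq B\) to \(I\subseteq IBI\); it says nothing about relative commutants, and there is no way to feed a commutant hypothesis into it without already knowing the inclusion is Cartan (the implication ``masa \(\Rightarrow\) unique expectation'' is precisely what is being proved, so this route is circular; the available Zarikian-type result, Proposition~\ref{prop:uniqueness_implies_masa}, goes the other way). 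The claim that ``every ideal is complemented locally over the Hausdorff space \(X\)'' is also false as stated — ideals of \(\Cont_0(X)\) are complemented only for clopen supports — and in fact Hausdorffness and Dauns--Hofmann are not what powers this step at all. The paper's argument uses only commutativity of~\(A\): if the condition fails for some~\(I\), pick \(\tau\in I'\cap\Mult(IBI)\) with \(\tau\notin\Mult(I)=Z\Mult(I)\) and \(f\in I\) with \(\tau f\notin I\); then \(\tau f\in B\) commutes with all of~\(A\), since \((\tau f)a=\tau(fa)=\tau(af)=(af)\tau=a(\tau f)\) because \(af\in I\) and \(fa=af\), while \(\tau f\notin A\) (otherwise \(\tau f=\lim(\tau f)u_\lambda\in \overline{A\cdot I}= I\) for an approximate unit \((u_\lambda)\) of~\(I\)), contradicting \(A'\cap B\subseteq A\). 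Equivalently, one can show \(A'\cap B\subseteq A\) implies \(I'\cap IBI\subseteq I\) by the same computation and then pass to multipliers of the non-degenerate inclusion \(I\subseteq IBI\). Without this localisation trick, your chain of equivalences does not close.
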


\begin{proof}
  Condition~\ref{cor:Cartan1} holds if and only if \(A\subseteq B\)
  is a Cartan inclusion by
  Theorem~\ref{thm:characterisation_of_aperiodic_crossed_products}.
  By Theorem~\ref{the:Cartan_Hausdorff}, a Cartan inclusion gives
  rise to an (essentially) unique Fell bundle
  \(\A=(A_\gamma)_{\gamma\in H}\) over a Hausdorff, étale, locally
  compact groupoid~\(H\) with unit space~\(\widecheck{A}\) such that
  \(B\cong \Cst_\red(H,\A)\) with an isomorphism restricting to
  \(A \cong\Cont_0(\widecheck{A}, \A)\).  Since
  \(A=\Cont_0(\widecheck{A})\), this Fell bundle must be a line
  bundle, and Fell line bundles over groupoids are equivalent to
  twisted groupoids.  Since the inclusion \(A\subseteq B\) is
  aperiodic, this groupoid is effective by Theorem
  \ref{thm:characterisation_of_aperiodic_crossed_products_via_topological_freeness}.
  Hence~\ref{cor:Cartan1} implies~\ref{cor:Cartan6}.  The converse
  follows from
  Theorem~\ref{thm:characterisation_of_aperiodic_crossed_products_via_topological_freeness}.
  Thus~\ref{cor:Cartan6} is equivalent to \(A\subseteq B\) being
  Cartan in the sense of Exel.

  Next we prove that~\ref{cor:Cartan2} in the present theorem is
  equivalent to condition~\ref{the:nc_Cartan3} in
  Theorem~\ref{the:nc_Cartan}, that is, to
  \(I' \cap \Mult(I B I) =Z\Mult(I)\) for each \(I\in\Ideals(A)\).
  This condition for \(I=A\) implies that~\(A\) is maximal abelian.
  Conversely, assume that there are an ideal \(I\in\Ideals(A)\) and
  \(\tau\in I' \cap \Mult(I B I)\) with
  \(\tau\notin Z\Mult(I)=\Mult(I)\).  Then there is \(f\in I\) with
  \(\tau f = f \tau\notin I\).  And \(\tau f \in B\).  Since~\(A\)
  is commutative, \(\tau f \in A'\): if \(a\in A\), then
  \(a\cdot (f \tau) = (a f)\tau = \tau (a f) = (\tau f) a\), that
  is, \(f\tau = \tau f\) commutes with~\(A\).  Hence~\(A\) is not
  maximal Abelian in~\(B\).

  It remains to show that \ref{cor:Cartan2} and~\ref{cor:Cartan3}
  are equivalent.  \ref{cor:Cartan2} implies~\ref{cor:Cartan3}
  because~\(A\) is a \(\Cst\)\nb-algebra.  For the converse,
  assume~\(A\) is MASA and let \(x\in A'\).  We need to show that
  \(x\in A\).  Since~\(A\) is a \Star{}subalgebra, \(x^* \in A'\) as
  well.  If~\(x\) is normal, then the \(\Cst\)\nb-subalgebra
  generated by \(x,x^*\) and~\(A\) is commutative.  Then maximality
  implies \(x\in A\).  For general \(x\in A'\), it follows that
  \(x^* x \in A\) because it is a normal element of~\(A'\).
  Represent~\(B\) faithfully on a Hilbert space~\(\Hils\) and let
  \(x=V\abs{x}\) be the polar decomposition of~\(x\).  So~\(V\) is a
  partial isometry with \(\ker V=(\abs{x}\Hils)^\bot=\ker x\).  Then
  \(\abs{x}=\sqrt{x^*x}\) is in~\(A\).  So~\(\abs{x}\) commutes
  with~\(x\) and with~\(x^*\).  Then
  \(V\abs{x} \abs{x} = x\abs{x} = \abs{x}V\abs{x}\).  And both
  \(V\abs{x}\) and~\(\abs{x}V\) vanish on the complement of the
  image of~\(\abs{x}\).  So~\(V\) commutes with~\(\abs{x}\).  And
  then~\(x\) is normal, as \(x=\abs{x}V\) implies that the range
  of~\(V\) contains \((\ker x)^\bot=(\ker \abs{x})^\bot\) and hence
  \(xx^*=\abs{x}VV^*\abs{x}=\abs{x}^2=x^*x\).  Thus \(x\in A\).
\end{proof}

\begin{remark}
  \ref{cor:Cartan2} and~\ref{cor:Cartan3} in Corollary
  \ref{cor:Cartan} are equivalent for an arbitrary
  \(\Cst\)\nb-subalgebra \(A\subseteq B\), and this is probably well
  known.  We have included a short proof for completeness, as we
  were not able to find a reference.
\end{remark}

\begin{bibdiv}
  \begin{biblist}
   \bib{Barlak-Li:Cartan_UCT}{article}{
  author={Barlak, Sel\c cuk},
  author={Li, Xin},
  title={Cartan subalgebras and the UCT problem},
  journal={Adv. Math.},
  volume={316},
  date={2017},
  pages={748--769},
  issn={0001-8708},
  review={\MR {3672919}},
  doi={10.1016/j.aim.2017.06.024},
}

\bib{Barlak-Li:Cartan_UCT_II}{article}{
  author={Barlak, Sel\c cuk},
  author={Li, Xin},
  title={Cartan subalgebras and the UCT problem, II},
  date={2017},
  status={eprint},
  note={\arxiv {1704.04939}},
}

\bib{Brown-Mingo-Shen:Quasi_multipliers}{article}{
  author={Brown, Lawrence G.},
  author={Mingo, James A.},
  author={Shen, Nien-Tsu},
  title={Quasi-multipliers and embeddings of Hilbert $C^*$\nobreakdash -bimodules},
  journal={Canad. J. Math.},
  volume={46},
  date={1994},
  number={6},
  pages={1150--1174},
  issn={0008-414X},
  review={\MR {1304338}},
  doi={10.4153/CJM-1994-065-5},
}

\bib{Brownlowe-Raeburn:Exel_Cuntz-Pimsner}{article}{
  author={Brownlowe, Nathan},
  author={Raeburn, Iain},
  title={Exel's crossed product and relative Cuntz--Pimsner algebras},
  journal={Math. Proc. Camb. Phil. Soc.},
  volume={141},
  date={2006},
  number={3},
  pages={497--508},
  issn={0022-247X},
  review={\MR {2984316}},
  doi={10.1017/S030500410600956X},
}

\bib{BussExel:Fell.Bundle.and.Twisted.Groupoids}{article}{
  author={Buss, Alcides},
  author={Exel, Ruy},
  title={Fell bundles over inverse semigroups and twisted \'etale groupoids},
  journal={J. Operator Theory},
  volume={67},
  date={2012},
  number={1},
  pages={153--205},
  issn={0379-4024},
  review={\MR {2881538}},
  eprint={http://www.theta.ro/jot/archive/2012-067-001/2012-067-001-007.html},
}

\bib{Buss-Exel-Meyer:Reduced}{article}{
  author={Buss, Alcides},
  author={Exel, Ruy},
  author={Meyer, Ralf},
  title={Reduced \(C^*\)\nobreakdash -algebras of Fell bundles over inverse semigroups},
  journal={Israel J. Math.},
  date={2017},
  volume={220},
  number={1},
  pages={225--274},
  issn={0021-2172},
  review={\MR {3666825}},
  doi={10.1007/s11856-017-1516-9},
}

\bib{Buss-Meyer:Actions_groupoids}{article}{
  author={Buss, Alcides},
  author={Meyer, Ralf},
  title={Inverse semigroup actions on groupoids},
  journal={Rocky Mountain J. Math.},
  issn={0035-7596},
  date={2017},
  volume={47},
  number={1},
  pages={53--159},
  doi={10.1216/RMJ-2017-47-1-53},
  review={\MR {3619758}},
}

\bib{Carlsen-Ruiz-Sims-Tomforde:Reconstruction}{article}{
  author={Carlsen, Toke Meier},
  author={Ruiz, Efren},
  author={Sims, Aidan},
  author={Tomforde, Mark},
  title={Reconstruction of groupoids and $C^*$-rigidity of dynamical systems},
  status={eprint},
  note={\arxiv {1711.01052}},
  date={2017},
}

\bib{Choi:Schwarz}{article}{
  author={Choi, Man Duen},
  title={A Schwarz inequality for positive linear maps on $C^*$-algebras},
  journal={Illinois J. Math.},
  volume={18},
  date={1974},
  pages={565--574},
  issn={0019-2082},
  review={\MR {0355615}},
  doi={10.1215/ijm/1256051007},
}

\bib{Clark-Exel-Pardo-Sims-Starling:Simplicity_non-Hausdorff}{article}{
  author={Orloff Clark, Lisa},
  author={Exel, Ruy},
  author={Pardo, Enrique},
  author={Sims, Aidan},
  author={Starling, Charles},
  title={Simplicity of algebras associated to non-Hausdorff groupoids},
  journal={Trans. Amer. Math. Soc.},
  volume={372},
  date={2019},
  number={5},
  pages={3669--3712},
  issn={0002-9947},
  review={\MR {3988622}},
  doi={10.1090/tran/7840},
}

\bib{Donsig-Fuller-Pitts:Extensions}{article}{
  author={Donsig, Allan P.},
  author={Fuller, Adam H.},
  author={Pitts, David R.},
  title={Von Neumann algebras and extensions of inverse semigroups},
  journal={Proc. Edinb. Math. Soc. (2)},
  volume={60},
  date={2017},
  number={1},
  pages={57--97},
  issn={0013-0915},
  review={\MR {3589841}},
  doi={10.1017/S0013091516000183},
}

\bib{Exel:Amenability}{article}{
  author={Exel, Ruy},
  title={Amenability for Fell bundles},
  journal={J. Reine Angew. Math.},
  volume={492},
  date={1997},
  pages={41--73},
  issn={0075-4102},
  review={\MR {1488064}},
  doi={10.1515/crll.1997.492.41},
}

\bib{Exel:Inverse_combinatorial}{article}{
  author={Exel, Ruy},
  title={Inverse semigroups and combinatorial $C^*$\nobreakdash -algebras},
  journal={Bull. Braz. Math. Soc. (N.S.)},
  volume={39},
  date={2008},
  number={2},
  pages={191--313},
  issn={1678-7544},
  review={\MR {2419901}},
  doi={10.1007/s00574-008-0080-7},
}

\bib{Exel:noncomm.cartan}{article}{
  author={Exel, Ruy},
  title={Noncommutative Cartan subalgebras of $C^*$\nobreakdash -algebras},
  journal={New York J. Math.},
  issn={1076-9803},
  volume={17},
  date={2011},
  pages={331--382},
  eprint={http://nyjm.albany.edu/j/2011/17-17.html},
  review={\MR {2811068}},
}

\bib{Exel:Partial_dynamical}{book}{
  author={Exel, Ruy},
  title={Partial dynamical systems, Fell bundles and applications},
  series={Mathematical Surveys and Monographs},
  volume={224},
  date={2017},
  pages={321},
  isbn={978-1-4704-3785-5},
  isbn={978-1-4704-4236-1},
  publisher={Amer. Math. Soc.},
  place={Providence, RI},
  review={\MR {3699795}},
}

\bib{Exel-Pitts:Weak_Cartan}{article}{
  author={Exel, Ruy},
  author={Pitts, David R.},
  status={eprint},
  note={\arxiv {1901.09683}},
  title={Characterizing groupoid \(\textup {C}^*\)\nobreakdash -algebras of non-Hausdorff \'etale groupoids},
  date={2019},
}

\bib{Feldman-Moore:Ergodic_II}{article}{
  author={Feldman, Jacob},
  author={Moore, Calvin C.},
  title={Ergodic equivalence relations, cohomology, and von Neumann algebras. II},
  journal={Trans. Amer. Math. Soc.},
  volume={234},
  date={1977},
  number={2},
  pages={325--359},
  issn={0002-9947},
  review={\MR {578730}},
  doi={10.2307/1997925},
}

\bib{Kishimoto:Outer_crossed}{article}{
  author={Kishimoto, Akitaka},
  title={Outer automorphisms and reduced crossed products of simple $C^*$\nobreakdash -algebras},
  journal={Comm. Math. Phys.},
  volume={81},
  date={1981},
  number={3},
  pages={429--435},
  issn={0010-3616},
  review={\MR {634163}},
  eprint={http://projecteuclid.org/euclid.cmp/1103920327},
}

\bib{Kumjian:Diagonals}{article}{
  author={Kumjian, Alexander},
  title={On $C^*$\nobreakdash -diagonals},
  journal={Canad. J. Math.},
  volume={38},
  date={1986},
  number={4},
  pages={969--1008},
  issn={0008-414X},
  review={\MR {854149}},
  doi={10.4153/CJM-1986-048-0},
}

\bib{Kwasniewski:Exel_crossed}{article}{
  author={Kwa\'sniewski, Bartosz Kosma},
  title={Exel's crossed product and crossed products by completely positive maps},
  journal={Houston J. Math.},
  volume={43},
  date={2017},
  number={2},
  pages={509--567},
  issn={0362-1588},
  review={\MR {3690127}},
  eprint={https://www.math.uh.edu/~hjm/restricted/pdf43(2)/11kwasniewski.pdf},
}

\bib{Kwasniewski-Meyer:Aperiodicity}{article}{
  author={Kwa\'sniewski, Bartosz Kosma},
  author={Meyer, Ralf},
  title={Aperiodicity, topological freeness and pure outerness: from group actions to Fell bundles},
  journal={Studia Math.},
  issn={0039-3223},
  volume={241},
  number={3},
  date={2018},
  pages={257--303},
  doi={10.4064/sm8762-5-2017},
  review={\MR {3756105}},
}

\bib{Kwasniewski-Meyer:Stone_duality}{article}{
  author={Kwa\'sniewski, Bartosz Kosma},
  author={Meyer, Ralf},
  title={Stone duality and quasi-orbit spaces for generalised \(\textup {C}^*\)\nobreakdash -inclusions},
  journal={Proc. Lond. Math. Soc. (3)},
  note={\arxiv {1804.09387}},
  status={accepted},
  date={2018},
}

\bib{Kwasniewski-Meyer:Essential}{article}{
  author={Kwa\'sniewski, Bartosz Kosma},
  author={Meyer, Ralf},
  title={Essential crossed products by inverse semigroup actions: simplicity and pure infiniteness},
  note={\arxiv {1906.06202}},
  status={eprint},
  date={2019},
}

\bib{Lance:Hilbert_modules}{book}{
  author={Lance, E. {Ch}ristopher},
  title={Hilbert $C^*$\nobreakdash -modules},
  series={London Mathematical Society Lecture Note Series},
  volume={210},
  publisher={Cambridge University Press},
  place={Cambridge},
  date={1995},
  pages={x+130},
  isbn={0-521-47910-X},
  review={\MR {1325694}},
  doi={10.1017/CBO9780511526206},
}

\bib{Li-Renault:Cartan_subalgebras}{article}{
  author={Li, Xin},
  author={Renault, Jean},
  title={Cartan subalgebras in $\mathrm {C}^*$-algebras. Existence and uniqueness},
  journal={Trans. Amer. Math. Soc.},
  volume={372},
  date={2019},
  number={3},
  pages={1985--2010},
  issn={0002-9947},
  review={\MR {3976582}},
  doi={10.1090/tran/7654},
}

\bib{Renault:Cartan.Subalgebras}{article}{
  author={Renault, Jean},
  title={Cartan subalgebras in $C^*$\nobreakdash -algebras},
  journal={Irish Math. Soc. Bull.},
  number={61},
  date={2008},
  pages={29--63},
  issn={0791-5578},
  review={\MR {2460017}},
  eprint={http://www.maths.tcd.ie/pub/ims/bull61/S6101.pdf},
}

\bib{Zarikian:Unique_expectations}{article}{
  author={Zarikian, Vrej},
  title={Unique expectations for discrete crossed products},
  journal={Ann. Funct. Anal.},
  volume={10},
  date={2019},
  number={1},
  pages={60--71},
  issn={2008-8752},
  review={\MR {3899956}},
  doi={10.1215/20088752-2018-0008},
}

  \end{biblist}
\end{bibdiv}
\end{document}